\documentclass{amsart}
\usepackage{amsfonts,  bbm}
\usepackage{overpic}

\topmargin .35in
\oddsidemargin 0.6in
\evensidemargin 0.6in
\textheight 7.8 in
\textwidth 5.5 in
\addtolength{\baselineskip}{4pt}

\newtheorem{theorem}{Theorem}[section]
\theoremstyle{plain}

\newtheorem{lemma}[theorem]{Lemma}
\newtheorem{prop}[theorem]{Proposition}
\theoremstyle{remark}

\numberwithin{equation}{section}

\setcounter{tocdepth}{1}

\newcommand{\tr}{\operatorname{tr}}

\newcommand{\re}{\operatorname{Re}}
\newcommand{\im}{\operatorname{Im}}
\newcommand{\res}{\operatorname{Res}}
\newcommand{\rank}{\operatorname{rank}}

\newcommand{\supp}{\operatorname{supp}}
\newcommand{\dist}{\operatorname{dist}}

\newcommand{\bbR}{\mathbb{R}}
\newcommand{\bbH}{\mathbb{H}}
\newcommand{\bbB}{\mathbb{B}}
\newcommand{\bbC}{\mathbb{C}}
\newcommand{\bbZ}{\mathbb{Z}}
\newcommand{\bbN}{\mathbb{N}}
\newcommand{\calR}{\mathcal{R}}

\newcommand{\calW}{\mathcal{W}}

\newcommand{\cinf}{C^\infty}
\newcommand{\del}{\partial}

\newcommand{\vep}{\varepsilon}

\newcommand{\nh}{\tfrac{n}{2}}
\newcommand{\bQ}{\mathbf{Q}}
\newcommand{\Ai}{{\rm Ai}}
\newcommand{\norm}[1]{\left\Vert #1 \right\Vert}
\newcommand{\abs}[1]{\left|#1 \right|}
\newcommand{\brak}[1]{\left\langle #1 \right\rangle}
\newcommand{\chr}{\mathbbm{1}}
\newcommand{\tN}{\widetilde{N}}

\begin{document}

\title{Resonance asymptotics for Schr\"odinger operators on hyperbolic space}
\author[Borthwick]{David Borthwick}
\address{Department of Mathematics and Computer Science, Emory
University, Atlanta, Georgia, 30322, U. S. A.}
\thanks{Borthwick supported in part by NSF\ grant DMS-0901937.}
\email{davidb@mathcs.emory.edu}
\author[Crompton]{Catherine Crompton}
\address{Department of Mathematics and Computer Science, Emory
University, Atlanta, Georgia, 30322, U. S. A.}
\email{lcrompt@mathcs.emory.edu}
\subjclass[2000]{Primary 58J50,35P25; Secondary 47A40}
\date{\today}

\begin{abstract}
We study the asymptotic distribution of resonances for scattering by compactly supported potentials in $\bbH^{n+1}$.  We first establish an upper bound for the resonance counting function that depends only on the dimension and the support of the potential.   We then establish the sharpness of this estimate by proving the a Weyl law for the resonance counting function holds in the case of radial potentials vanishing to some finite order at the edge of the support.
As an application of the existence of potentials that saturate the upper bound, we derive additional resonance asymptotics that hold in a suitable generic sense.  These generic results include asymptotics for the resonance count in sectors. 
\end{abstract}

\maketitle
\tableofcontents

\bigbreak
\section{Introduction}\label{intro.sec}

In this paper we will study the spectral asymptotics of Sch\"odinger operators of the form $\Delta + V$, where $\Delta$ is the (positive) Laplacian on $\bbH^{n+1}$, and $V\in L^\infty_{\rm cpt}(\bbH^{n+1},\bbC)$ is a compactly supported, possibly complex-valued potential.  The essential spectrum
of $\Delta +V$ is $[n^2/4,\infty)$ and is absolutely continuous.  The eigenvalue spectrum is finite and contained in $(0,n^2/4)$, and hence it is the resonance set that plays the role of discrete spectral data in this setting.

To define resonances, consider first the resolvent of $\Delta$, written in the form $R_0(s) = (\Delta - s(n-s))^{-1}$ for $\re s > \nh$.
The well-known formula expressing the kernel of $R_0(s)$ in terms of hypergeometric functions (see \cite{Patterson:1975}) shows immediately that the cutoff resolvent $\chi R_0(s) \chi$ admits a meromorphic extension to $s\in\bbC$, with poles of finite rank, for any $\psi \in \cinf_0(\bbH^{n+1})$.

It is easy to extend this meromorphic continuation result to $R_V(s) := (\Delta + V - s(n-s))^{-1}$; see \S\ref{potsc.sec} for the details.
We define the resonance set $\calR_V$ as the set of poles of $R_V(s)$, counted according to multiplicity given by the rank of the residue.
\begin{figure}
\begin{center}
\begin{overpic}[scale=.8]{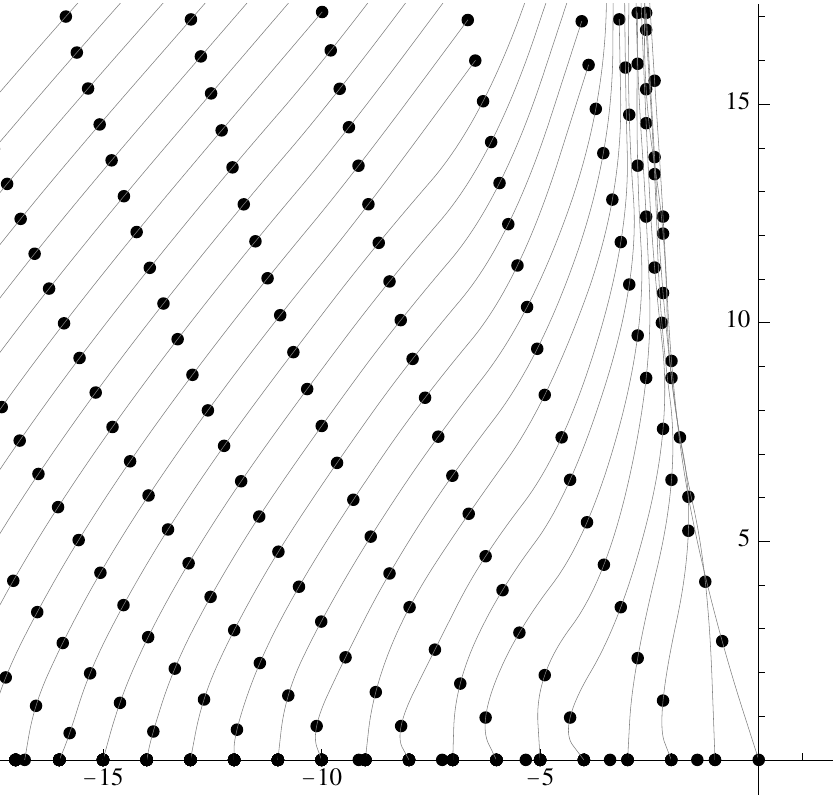}
\end{overpic}
\end{center}
\caption{Resonance plot for a radial step potential in $\bbH^{3}$.  The thin lines indicate the spherical harmonic mode $l$,
with $l =0$ at the right.  The multiplicity on each line is $2l+1$.
(Each mode also contributes resonances at negative integers, not on these lines.)}\label{RPotRes3.fig}
\end{figure}
The resonance counting function is 
\[
N_V(t) := \#\left\{\zeta\in\calR_V:\>\abs{\zeta-\nh} \le t \right\}.
\]
Figure~\ref{RPotRes3.fig} shows a sample of the resonance set for $V = \chi_{B(1)}$ in $\bbH^3$, the characteristic function of the unit ball.  The corresponding counting function shown in Figure~\ref{NV3.fig}.   These plots are based on explicit calculation of the resonance set in terms of Legendre functions; see \S\ref{scmatrix.sec} for the formulas.

It is essentially already known that
\begin{equation}\label{NV.bound}
N_V(t) = O(t^{n+1}).
\end{equation}
That is, for real $V$ this is a special case of \cite[Thm.~2.2]{Borthwick:2010}, and the extension to compactly supported complex potentials is straightforward.
\begin{figure}
\begin{center}
\begin{overpic}[scale=.8]{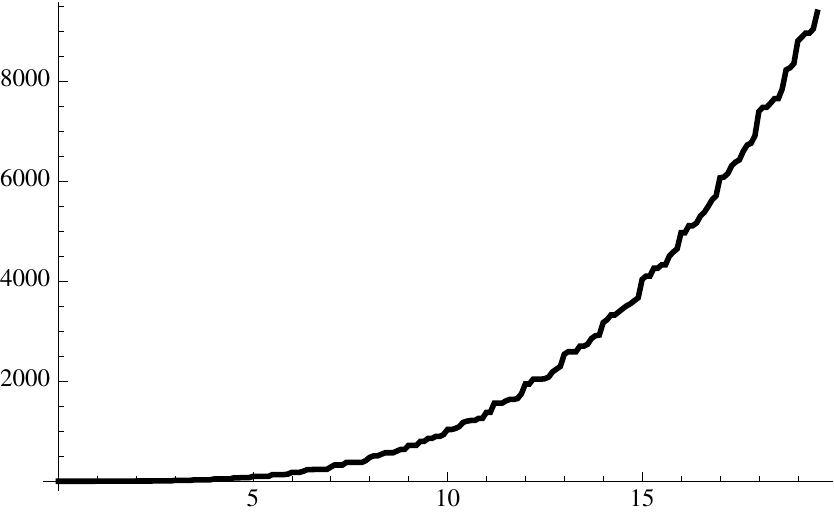}
\put(72,40){$N_V(t)$}
\end{overpic}
\end{center}
\caption{Resonance counting function for a radial step potential in $\bbH^3$.}\label{NV3.fig}
\end{figure}

In the case $V=0$, the resonance set is of course well-known: for $n$ odd we have
\[
\calR_0 = \left\{ \zeta \in - \bbN_0:\>m_0(-k) = (2k+1) \frac{(k+1)\cdots(k+n-1)}{n!}\right\},
\]
while for $n$ even there are no resonances, $\calR_0 = \emptyset$.   In the case of $n$ odd we thus have a simple asymptotic, 
\begin{equation}\label{N0.asymp}
N_0(t) = \frac{2}{(n+1)!} t^{n+1} + O(t^n).
\end{equation}
For later reference, we label the asymptotic constant for the model case as
\begin{equation}\label{A0.def}
A^{(0)}_n := \begin{cases}
\frac{2}{(n+1)!} & n \text{ odd},\\
0 & n \text{ even}.
\end{cases}
\end{equation}

For more general families of compactly supported perturbations of the Laplacian on $\bbH^{n+1}$, including metric and topological perturbations as well as smooth real potentials, sharp estimates of $N(t)$ were obtained in Borthwick \cite{Borthwick:2010}.  
Our first result is an extension of this bound to non-smooth, complex-valued potentials.  
These estimates involve the integrated version of the counting function,
\[
\tN_V(a) :=  (n+1)\int_0^a \frac{N_V(t) - N_V(0)}{t} dt,
\]
a common usage in the theory of entire functions.

The constant in the upper bound is expressed in terms of a \emph{indicator function}, defined for $\abs{\theta} \le \tfrac{\pi}2$ by
\begin{equation}\label{indicator.def}
h_{r_0}(\theta) :=  \frac{2}{\Gamma(n)}  \int_0^\infty  
\frac{[H(x e^{i\theta}; r_0)]_+}{x^{n+2}}\>dx,
\end{equation}
where $[\cdot]_+$ denotes the positive part and
\begin{equation}\label{H.def}
\begin{split}
H(\alpha, r) & :=   \re \left[ 
2\alpha \log \Bigl( \alpha \cosh r +  \sqrt{1 + \alpha^2 \sinh^2 r} \Bigr) - \alpha \log (\alpha^2-1) \right] \\
&\qquad + \log \left| \frac{\cosh r - \sqrt{1 + \alpha^2 \sinh^2 r}}{\cosh r + \sqrt{1 + \alpha^2 \sinh^2 r}} \right|.
\end{split}
\end{equation}
The corresponding asymptotic constant is the integral,
\begin{equation}\label{An.def}
A_n(r_0) := A^{(0)}_n + \frac{n+1}{2\pi} \int_{-\frac\pi2}^{\frac\pi2} h_{r_0}(\theta)\>d\theta.
\end{equation}
(The angular dependence of the indicator function will play a role later when we consider the distribution of resonances in sectors.)

\begin{theorem}\label{upper.thm}
Suppose that $V \in L^\infty_{\rm cpt}(\bbH^{n+1}, \bbC)$ has support contained in a closed ball of radius $r_0$.
Then 
\[
\tN_V(a) \le A_n(r_0) a^{n+1} + O(a^n \log a).
\]
\end{theorem}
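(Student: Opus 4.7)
The plan is to realize the resonance set as (a subset of) the zeros of a regularized Fredholm determinant $D_V(s)$, bound the growth of $\log|D_V|$ along rays from $s=\nh$, and convert the growth bound into the counting estimate via a Jensen/Carleman argument. This is the strategy used in \cite{Borthwick:2010} for smooth perturbations; the essential adaptations here are the choice of regularization to accommodate $L^\infty$ data and the sharp sector-dependent singular value estimate that produces the indicator function $h_{r_0}$.

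Fix a cutoff $\chi \in \cinf_0(\bbH^{n+1})$ with $\chi \equiv 1$ on $\supp V$. The Birman--Schwinger identity shows that, apart from the known poles of $R_0$, the set $\calR_V$ is contained in the zero set of
\[
D_V(s) := {\det}_p\bigl(I + V\chi R_0(s)\chi\bigr),
\]
where ${\det}_p$ is the regularized Fredholm determinant on the Schatten ideal $\mathcal{S}^p$ for $p \ge n+2$, this choice being forced by the logarithmic on-diagonal singularity of the hyperbolic resolvent kernel when $V$ is merely $L^\infty$. With a scalar factor accounting for $\calR_0$ folded in, $D_V$ becomes an entire function of order $n+1$ whose zero set reproduces $\calR_V$ with the correct multiplicity.

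The crux of the argument is to estimate the singular values of $K(s) := V\chi R_0(s)\chi$ for $s = \nh + re^{i\theta}$ large. The kernel of $R_0(s)$ is an explicit hypergeometric function of $\cosh d(x,y)$, and its large-$s$ asymptotics along the ray of direction $\theta$ is governed by a uniform WKB expansion of Legendre functions of large order and degree. Decomposing $L^2(B(r_0))$ into spherical harmonics and reducing to a radial worst-case potential via min-max, one obtains a bound of the form $\mu_j(K(s)) \le C\exp(H(s/j;r_0))(1+O(|s|^{-1}))$ with $H$ as in \eqref{H.def}, the ratio $\alpha = s/j$ being the natural semiclassical parameter. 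The standard inequality for ${\det}_p$, combined with a change of variable $x = |s|/j$ that replaces the sum over $j$ by an integral, then yields
\[
\log|D_V(re^{i\theta})| \le r^{n+1}\, h_{r_0}(\theta) + O(r^n\log r),
\]
where $h_{r_0}$ is exactly \eqref{indicator.def}; the weight $x^{-n-2}/\Gamma(n)$ reflects the density of indices under the change of variable together with the ${\det}_p$ correction, whose contribution is absorbed into the remainder.

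Finally, applying Jensen's formula to $D_V$ on the disk $\{|\zeta-\nh| \le a\}$ and using the definition of $\tN_V(a)$ yields
\[
\tN_V(a) \le \left(A^{(0)}_n + \frac{n+1}{2\pi}\int_{-\pi/2}^{\pi/2} h_{r_0}(\theta)\,d\theta\right) a^{n+1} + O(a^n\log a),
\]
with the $A^{(0)}_n$ contribution coming from the factored-out scalar for $\calR_0$. The principal obstacle is the sharp singular value bound on $\mu_j(K(s))$: the min-max reduction to the radial case must be justified for complex $V$ where positivity is unavailable (for example by comparing $K(s)^*K(s)$ to the radial model, or by a symmetrization argument on the kernel), and the WKB asymptotics of the Legendre kernel must be controlled uniformly in $j$, $|s|$, and $\theta \in [-\pi/2,\pi/2]$. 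Once this bound is in hand, the determinant formalism and Jensen counting are otherwise standard.
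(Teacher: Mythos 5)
Your overall strategy (determinant, singular-value bounds producing $H$, change of variable $x$ giving the $x^{-n-2}$ weight, Jensen/Carleman counting) is in the right family, but two steps would fail as written.

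First, the choice of a $p$-regularized determinant with $p\ge n+2$ together with ``the standard inequality for $\det_p$'' cannot produce the sharp constant. The standard bound is $\log|{\det}_p(1+K)|\le \Gamma_p\norm{K}_p^p=\Gamma_p\sum_j\mu_j(K)^p$, and in the region where your singular values behave like $e^{jH}$ with $H>0$ this is of size $e^{p\sup_j jH}$ --- exponentially worse than the desired $\sum_j[jH]_+\sim h_{r_0}(\theta)a^{n+1}$. The sharp constant requires the Weyl product inequality $|\det(1+K)|\le\prod_j(1+\mu_j(K))$, which is only available for the unregularized determinant of a trace-class operator; there is no analogue for ${\det}_p$ because the regularizing exponentials involve eigenvalues, not singular values. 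For the same reason the ${\det}_p$ correction terms $\tr K^k$ are not ``absorbed into the remainder'': they are exponentially large wherever the $\mu_j$ are. Since $V\chi R_0(s)\chi$ is not trace class for $L^\infty$ potentials (the on-diagonal singularity you correctly identify), you must replace it by a genuinely trace-class object. The paper does this by working with the relative scattering determinant $\tau(s)=\det S_V(s)S_0(s)^{-1}$: the operator $S_VS_0^{-1}-1=(2s-n)E_0(s)^t[\Delta,\chi_2]R_V(s)[\Delta,\chi_1]E_0(n-s)$ is smoothing on $S^n$, the Weyl inequality applies in the spherical harmonic basis, and the Legendre asymptotics enter only through the explicit model Poisson operator $E_0$ --- so no min-max reduction of a complex $V$ to a radial one is ever needed; the potential enters only through a single operator-norm bound on $[\Delta,\chi_2]R_V(s)[\Delta,\chi_1]$, which follows from the physical-half-plane resolvent estimate.

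Second, your Jensen step is not closed. Jensen's formula on the disk $\{|\zeta-\nh|\le a\}$ requires an upper bound for $\log|D_V|$ on the \emph{full} circle, and the growth you attribute to $h_{r_0}(\theta)$ for $|\theta|\le\pi/2$ actually lives in the opposite half-plane: for $\re s>\nh$ the Birman--Schwinger operator is small (the physical resolvent decays like $|s-\nh|^{-1}$), while the $e^{kH}$ growth comes from the meromorphically continued resolvent in $\re s<\nh$, where the zeros of $D_V$ sit. You therefore still need an estimate on the non-physical half of the circle and a matching of the two halves across the critical line. The paper handles this with the functional equation $\tau(s)\tau(n-s)=1$ and a half-disk contour centered on $\re s=\nh$, at the price of having to control the critical-line segment; that control is exactly the scattering-phase estimate $|\tau'/\tau(\nh+it)|=O(\brak{t}^{n-1})$ of Proposition~\ref{sigma.bound}, which is also what produces the improved $O(a^n\log a)$ error term. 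In your scheme the analogous near-critical-line control and the left-half-circle bound are both missing, and without them neither the constant $A_n(r_0)$ nor the error term follows.
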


The error estimate in Theorem~\ref{upper.thm} improves on the $o(a^{n+1})$ result of \cite{Borthwick:2010}, but this improvement is specific to the potential case.  It is based an sharper scattering phase estimate that we will give in Proposition~\ref{sigma.bound}. 

Our main goal in this paper is to demonstrate the sharpness of Theorem~\ref{upper.thm} in the case of radial potentials.  This is analogous to the Euclidean result for radial potentials in odd dimensions due to Zworski \cite{Zworski:1989}, with the exact constant later computed by Stefanov \cite{Stefanov:2006}.

\begin{theorem}\label{asymp.thm}
Suppose that $V \in L^\infty(\bbH^{n+1}, \bbC)$ is a radial potential with support in a ball of radius $r_0$.  If we assume that
$V$ is continuous near $r=r_0$ and has finite order of vanishing in the sense that
\[
V(r) \sim \kappa (r_0-r)^{\beta},\quad\text{as }r\to r_0,
\]
for some $\beta\ge 0$ and $\kappa \ne 0$.  Then
\[
N_V(t) \sim A_n(r_0) t^{n+1}.
\]
\end{theorem}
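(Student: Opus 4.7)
The plan is to complement the upper bound of Theorem~\ref{upper.thm} with a matching lower bound $\tN_V(a) \ge A_n(r_0)a^{n+1} + o(a^{n+1})$, proved mode-by-mode along the lines of the Zworski--Stefanov treatment of radial potentials in Euclidean space. In geodesic polar coordinates $(r,\omega)\in[0,\infty)\times S^n$, radiality lets $\Delta+V$ decompose into Sturm--Liouville problems $L_l$ on the half-line, one for each $l\ge 0$, with centrifugal term $l(l+n-1)/\sinh^2 r$ and angular multiplicity $d_l = \dim\mathcal{H}_l(S^n) \sim l^{n-1}/(n-1)!$. The scattering determinant then factorizes as $\prod_l s_l(s)^{d_l}$, and $\calR_V$ is, with multiplicities, the union of the $l$-mode resonance sets, which I would realize as the zero sets of entire functions $F_l(s)$ built from perturbed Jost solutions: on $r>r_0$ the outgoing solution is an explicit Legendre-type hypergeometric, while on $[0,r_0]$ one solves a Volterra integral equation and matches at $r=r_0$.

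The crux of the argument is a WKB/saddle-point analysis of $F_l(s)$ as $|s|\to\infty$. In the joint regime $\rho := |s|\to\infty$ with $\alpha := s/l$ bounded, I would show that
\[
\log|F_l(\rho e^{i\theta})| = l \cdot H(\alpha e^{i\theta}; r_0) + o(l),
\]
with $H$ exactly as in \eqref{H.def}; the two pieces of $H$ correspond respectively to integrating the leading WKB phase on $[0,r_0]$ and to matching against the outgoing Legendre asymptotics for $r>r_0$. The hypothesis $V(r)\sim \kappa(r_0-r)^\beta$ with $\kappa\ne 0$ is used exactly here: a local Laplace expansion of the matching data at $r=r_0$ shows that the subleading amplitude carries a contribution of size comparable to $\kappa$, so the exponential growth rate is \emph{attained} rather than merely bounded above.

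A Cartwright--Levinson count then gives the zero-counting asymptotic for each $F_l$, and the weighted sum $\sum_l d_l\, N^{(l)}_V(t)$ converts, via the rescaling $\alpha\sim s/l$, into a Riemann sum for
\[
\frac{2}{\Gamma(n)}\int_0^\infty [H(\alpha e^{i\theta};r_0)]_+\, \alpha^{-n-2}\,d\alpha = h_{r_0}(\theta),
\]
with the positive-part truncation arising because only modes with $H>0$ contribute to the order-$(n+1)$ growth of the full determinant. Integrating in $\theta$ and adding the model contribution \eqref{N0.asymp} yields the desired lower bound, which together with Theorem~\ref{upper.thm} forces the asymptotic equality.

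The main obstacle is the WKB step: showing that the finite-order vanishing of $V$ at $r_0$ exactly saturates the indicator. In the Euclidean case this reduces to classical Bessel asymptotics, but here the analogous analysis for Legendre-type solutions $P^{-\mu}_{-s}(\cosh r)$ is more delicate because of the $\sinh r$ in the centrifugal term, and one must patch carefully between the $l\lesssim \rho$ and $l\gg \rho$ regimes so that the Riemann sum reproduces precisely the truncated integrand $[H]_+$ of \eqref{indicator.def} and not its unrestricted analogue.
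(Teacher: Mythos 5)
Your proposal captures most of the paper's machinery — the mode decomposition, the Volterra integral equation for the perturbed Jost-type solution, the Olver--Airy asymptotics for the Legendre functions, and Laplace's method at $r=r_0$ as the place where the hypothesis $\kappa\ne 0$ is used — and the Riemann-sum picture in the rescaled variable $\alpha$ is indeed how the indicator $h_{r_0}(\theta)$ emerges. But the step you flag at the end as ``the main obstacle'' (patching between the $l\ll\rho$ and $l\gtrsim\rho$ regimes) is not a technical afterthought; it is where the proof either closes or does not, and as written your argument does not close it.

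Two concrete issues. First, the per-mode entire functions $F_l(s)$ are not of exponential type: already in the free case $F^k_0(s)=2^{k-1}\Gamma(k)/\Gamma(k+s-\tfrac{n-1}2)$ has $\log|F^k_0(s)|$ growing like $\pm|s|\log|s|$ by Stirling, so a Cartwright--Levinson count cannot be applied to $F_l$ directly; your estimate $\log|F_l|\approx lH+o(l)$ is really a statement about the \emph{relative} eigenvalue $\Lambda_k=(F^k(n-s)/F^k(s))\,(F^k_0(s)/F^k_0(n-s))$, in which the factorial growth cancels and only then is the indicator $H$ visible. This is precisely why the paper aggregates through the scattering determinant $\tau(s)=\prod_l\Lambda_k(s)^{\mu_n(l)}$ and the Jensen-type contour formula of Proposition~\ref{count.prop}, rather than counting zeros of each $F_l$ separately. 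Second, and more substantively, the upper estimate $\log|\Lambda_k|\le kH+o(k)$ alone yields only the upper bound already in Theorem~\ref{upper.thm}; the resonances cluster along the rescaled curve $\{H=0\}$, and a matching lower bound there requires three additional ingredients that your sketch omits: a Minimum Modulus argument on the entire functions $F^k$ for the transition zone $H\approx 0$ (Proposition~\ref{inter.prop}), an exponential-smallness estimate $|\Lambda_k-1|\le C\delta^{-4}k^{-1}e^{kH(\alpha;r_0+\delta)}$ in the region $H<0$ coming from a separate commutator identity (Proposition~\ref{lamk.upper.prop}), and a Lindel\"of-type bound (Lemma~\ref{lind.lemma}) to handle angles $|\theta|$ near $\tfrac{\pi}2$ where the hyperbolic estimates degenerate. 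A minor slip: your multiplicity $d_l\sim l^{n-1}/(n-1)!$ is missing a factor of $2$; the correct asymptotic $\mu_n(l)\sim 2l^{n-1}/\Gamma(n)$ is what produces the prefactor $2/\Gamma(n)$ in \eqref{indicator.def}.
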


\bigbreak
In the final section of the paper we include some applications of this result.  In the Euclidean case, Christiansen \cite{Christ:2005, Christ:2012}
has established generic properties of resonance distributions for potential scattering, and the exact asymptotic for the radial case plays a key role in this work.  Using Theorem~\ref{asymp.thm} we can prove some analogous theorems for $\bbH^{n+1}$.  These results (see \S\ref{dist.sec} for the precise statements) include:
\begin{enumerate}
\item  For generic (real or complex) potentials $V$ with support in a compact set $K\subset \bbH^{n+1}$ with non-empty interior, the order of growth of the resonance counting function is optimal in the sense that
\[
\limsup_{t\to\infty} \frac{\log N_V(t)}{\log t} = n+1.
\]
\medskip
\item   For generic (real or complex) potentials supported in the closed ball $\overline{B}(r_0)$, 
\[
\limsup_{t\to\infty} \frac{N_V(t)}{t^{n+1}} = A_n,
\]
i.e.~the estimate in Theorem~\ref{upper.thm} is generically optimal for potentials with support equal to $\overline{B}(r_0)$.
\medskip
\item   For generic (real or complex) potentials with support in the closed ball $\overline{B}(r_0)$, there is a generic lower bound on the number of resonances contained in a sector near the critical line, with the optimal order of growth $n+1$ and a constant independent of the size of the sector. 
\medskip
\item  For potentials with support in $\overline{B}(r_0)$ for which $N_V(t) \sim A_n(r_0)t^{n+1}$, the asymptotic distribution of resonances in sectors is governed by the indicator function $h_{r_0}(\theta)$ defined in \eqref{indicator.def}.  The same distribution holds, in a weighted average sense, for families of perturbations of of such potentials.
\end{enumerate}

The paper is organized as follows.  In \S\ref{potsc.sec} we introduce the basic spectral operators associated to $\Delta+V$, the scattering matrix in particular.  In \S\ref{rescount.sec} we establish the formula for counting resonances in terms of the relative scattering determinant, which is the basis for the rest of the analysis.  We also prove some general estimates on the scattering determinant that will be needed later, and which in particular give the proof of Theorem~\ref{upper.thm}.  Explicit formulas for scattering matrix elements in the radial case are worked out in \S\ref{scmatrix.sec}.
In \S\ref{asym.sec} we develop precise recursive estimates for these matrix elements.  From these estimates we derive asymptotics
of the scattering determinant in \S\ref{rscdet.sec}, yielding the proof of Theorem~\ref{asymp.thm}.  Finally \S\ref{dist.sec} contains the  resonance distribution results for generic potentials as outlined above. 

\vskip12pt\noindent
\textbf{Acknowledgment.}  We would like to thank Tanya Christiansen for advice on the proof of Lemma~\ref{sigma.bound} as well as for helpful discussions related to the material in \S\ref{dist.sec}.

\bigbreak
\section{Potential scattering in $\bbH^{n+1}$}\label{potsc.sec}

Consider a Schr\"odinger operator $\Delta+V$ in $\bbH^{n+1}$, with potential $V \in L^\infty_{\rm cpt}(\bbH^{n+1}, \bbC)$.
The resolvent $R_V(s) := (\Delta + V - s(n-s))^{-1}$ is defined by the spectral theorem for $\re s$ sufficiently large, and
is related to the model resolvent $R_0(s)$ by the identity
\begin{equation}\label{r0rv}
R_0(s) - R_V(s) = R_V(s)V R_0(s).
\end{equation}
As mentioned in the introduction, the cutoff resolvent $\chi R_0(s) \chi$ admits a meromorphic continuation to $s\in\bbC$.  We can describe this more precisely in terms of weighted $L^2$ spaces.  
In terms of geodesic polar coordinates $(r,\omega)$ for $\bbH^{n+1} \cong \bbR_+ \times S^n$, define $\rho := 2e^{-r}$.  In terms of the Poincar\'e ball compactification of $\bbH^{n+1}$, $\rho$ is a boundary defining function.
The model resolvent $R_0(s)$ extends meromorphically to $\re s > -N+\nh$, as an operator
$\rho^N L^2(\bbH^{n+1}) \to \rho^{-N} L^2(\bbH^{n+1})$.  Since $VR_0(s)$ is compact as an operator on $\rho^N L^2(\bbH^{n+1})$, for $\re s > -N+\nh$, with arbitrarily small norm for $\re s$ sufficiently large, the analytic Fredholm theorem yields a meromorphic inverse $(1 + VR_0(s))^{-1}$.  In conjunction with \eqref{r0rv}, this establishes the following:
\begin{prop}
The resolvent $R_V(s)$ extends meromorphically to $s\in \bbC$ as
\[
R_V(s) = R_0(s) (1 + VR_0(s))^{-1},
\]
with poles of finite rank.
For any $N>0$, $R_V(s)$ is bounded as an operator $\rho^N L^2(\bbH^{n+1}) \to \rho^{-N} L^2(\bbH^{n+1})$ for $\re s > -N+\nh$.
\end{prop}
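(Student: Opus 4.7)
The plan is to deduce the proposition directly from the analytic Fredholm theorem, treating $\re s > -N + \nh$ as the ambient half-plane (and then letting $N \to \infty$). The starting ingredient, cited just before the statement, is that $R_0(s)$ extends meromorphically to $\re s > -N + \nh$ as a bounded operator $\rho^N L^2 \to \rho^{-N} L^2$ with poles of finite rank.

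First I would check that $VR_0(s)$ is a meromorphic family of compact operators on $\rho^N L^2(\bbH^{n+1})$ in that half-plane. Since $\supp V$ is a compact set $K \subset \bbH^{n+1}$ on which $\rho$ is bounded above and below, multiplication by $V$ is bounded as a map $\rho^{-N} L^2 \to \rho^N L^2$, with norm controlled by $\|V\|_\infty$ times a constant depending on $K$. Composing with $R_0(s)\colon \rho^N L^2 \to \rho^{-N} L^2$ gives boundedness of $VR_0(s)$ on $\rho^N L^2$. Compactness comes from elliptic regularity: $R_0(s) f$ lies in $H^2_{\rm loc}$, so $\chi R_0(s) f$, and in particular $V R_0(s) f$, lies in a fixed compactly supported $H^2$ space, which embeds compactly into $\rho^N L^2$ by Rellich.

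Next I would verify the smallness hypothesis of the analytic Fredholm theorem by showing $\|VR_0(s)\|_{\rho^N L^2\to \rho^N L^2} < 1$ for $\re s$ sufficiently large. On the physical half-plane $\re s > \nh$ the spectral theorem gives $\|R_0(s)\|_{L^2\to L^2} = O(1/|s(n-s) - n^2/4|)$; combined with the boundedness of $V$, this forces $\|VR_0(s)\|$ to be less than $1$ once $\re s$ is large enough, so $I + VR_0(s)$ is invertible there. The analytic Fredholm theorem then furnishes a meromorphic inverse $(I + VR_0(s))^{-1}$ on all of $\re s > -N + \nh$, with poles of finite rank. Multiplying by $R_0(s)$ on the left and invoking the resolvent identity \eqref{r0rv} (verified first for $\re s$ large, then continued meromorphically) gives the formula $R_V(s) = R_0(s)(I + VR_0(s))^{-1}$ and simultaneously the mapping property $\rho^N L^2 \to \rho^{-N} L^2$ on $\re s > -N + \nh$. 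Since $N$ was arbitrary, letting $N \to \infty$ yields the claimed meromorphic extension to all of $\bbC$.

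The only genuinely delicate point is confirming that the two definitions of $R_V(s)$ — the one from the spectral theorem in $\re s \gg 0$ and the one built through $(I+VR_0(s))^{-1}$ — agree where both are defined, so that the latter is really a meromorphic continuation of the former. This I would handle by the standard computation $(I + VR_0(s))R_0(s)^{-1}\bigl[R_0(s) - R_V(s)\bigr] = VR_V(s)$ in the region of convergence, which forces equality and then propagates by analytic continuation. Everything else is a routine application of the Fredholm machinery already used for $R_0(s)$ itself.
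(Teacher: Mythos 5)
Your proposal is correct and follows essentially the same route as the paper: meromorphy of $R_0(s)$ on the weighted spaces $\rho^N L^2$, compactness and smallness of $VR_0(s)$ for $\re s$ large, the analytic Fredholm theorem, and the resolvent identity \eqref{r0rv}. The paper leaves all of these steps as a single terse paragraph, and your proposal simply fills in the routine details (the one cosmetic slip being that $VR_0(s)f$ itself need not lie in $H^2$ since $V$ is only $L^\infty$ — compactness should be factored through $\chi R_0(s)$ for a smooth cutoff $\chi$, followed by the bounded multiplication by $V$).
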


With meromorphic continuation of $R_V(s)$ established, we define the resonance set $\calR_V$ as the set of poles of $R_V(s)$, counted with multiplicities
\begin{equation}\label{mv.def}
m_V(\zeta) := \rank \res_\zeta R_V(s).
\end{equation}

\bigbreak
\subsection{Resolvent estimate}

For later use we need an estimate on the cutoff resolvent in the physical plane.
\begin{prop}\label{res.est}
Suppose $V \in L^\infty_{\rm cpt}(\bbH^{n+1}, \bbC)$ and $\chi \in C^\infty_0(\bbH^{n+1})$, with $\chi = 1$ on $\supp V$.  
There exist $C>0$, $M>0$ such that for $\theta \in [-\tfrac{\pi}2,\tfrac{\pi}2]$ and $a \ge M$,
\[
\norm{\chi R_V(\nh + ae^{i\theta}) \chi} \le Ca^{-1}.
\]
Here $C$ depends only on the support of $\chi$, while $M$ depends on $\supp \chi$ and $\norm{V}_\infty$.
\end{prop}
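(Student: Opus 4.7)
The plan is to reduce the estimate on $R_V$ to one on the model resolvent $R_0$ via the standard perturbation identity, and then invoke a high-frequency bound on the cutoff free resolvent. Starting from
\[
R_V(s) = R_0(s) - R_0(s)\,V R_V(s)
\]
and using $\chi V = V\chi = V$ (since $\chi = 1$ on $\supp V$), sandwiching by $\chi$ and inserting $\chi$ factors where allowed yields
\[
(I + \chi R_0(s)\chi \cdot V)\,\chi R_V(s)\chi = \chi R_0(s)\chi,
\]
so the proof reduces to a uniform bound on $\chi R_0(s)\chi$ combined with a Neumann-series inversion of the left-hand operator.

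The key input is
\[
\norm{\chi R_0(\nh + a e^{i\theta})\chi} \le \frac{C_0}{a}, \qquad \theta \in [-\tfrac{\pi}{2}, \tfrac{\pi}{2}], \quad a \ge M_0,
\]
with $C_0$ depending only on $\supp \chi$. For $\theta$ bounded away from $\pm\tfrac{\pi}{2}$ this is immediate from the spectral theorem, since $s(n-s) = \tfrac{n^2}{4} - a^2 e^{2i\theta}$ lies at distance of order $a^2$ from the essential spectrum $[\tfrac{n^2}{4},\infty)$, yielding the much stronger $\norm{R_0(s)} = O(a^{-2})$. The substantive content is the uniformity as $\theta \to \pm\tfrac{\pi}{2}$, where $s(n-s)$ approaches the continuous spectrum and only the cutoff keeps the norm finite. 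This uniform bound can be extracted either from the explicit Patterson kernel of $R_0(s)$ on $\bbH^{n+1}$ in terms of hypergeometric functions of the hyperbolic distance, whose $|s|^{-1}$ decay on compact sets is well understood, or from general high-frequency non-trapping cutoff-resolvent estimates for asymptotically hyperbolic Laplacians.

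Granted this free-resolvent bound, set $M := \max(M_0,\,2C_0\norm{V}_\infty)$. For $a \ge M$ we have $\norm{\chi R_0(s)\chi \cdot V} \le (C_0/a)\norm{V}_\infty \le \tfrac{1}{2}$, so the Neumann series for $(I + \chi R_0\chi V)^{-1}$ converges with operator norm at most $2$, and the displayed identity yields $\norm{\chi R_V(s)\chi} \le 2C_0/a$. This delivers the advertised dependence of constants: $C = 2C_0$ depends only on $\supp\chi$, while $M$ depends on $\supp\chi$ and $\norm{V}_\infty$. The main obstacle is the uniform free-resolvent estimate up to the critical line $\re s = \nh$: the interior case collapses to a one-line spectral-theorem bound, but uniformity as $\theta \to \pm\tfrac{\pi}{2}$ is where genuine scattering-theoretic input is required.
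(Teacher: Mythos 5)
Your argument follows essentially the same route as the paper: both reduce $\chi R_V \chi$ to $\chi R_0 \chi$ via a perturbation identity that can be inverted by Neumann series once $a$ is large compared to $\norm{V}_\infty$, and both rest on the uniform high-frequency bound $\norm{\chi R_0(\nh+ae^{i\theta})\chi} \le C_0/a$ up to the critical line. The only substantive difference is that you leave that key input as an assertion (citing the Patterson kernel or generic non-trapping estimates), whereas the paper pins it down by citing Guillarmou's weighted estimate $\norm{\rho^{1/2}R_0(s)\rho^{1/2}} \le C\abs{s-\nh}^{-1}$ for $\re s > \nh - \tfrac18$ — you should make that reference concrete, since converting the Patterson kernel into an operator-norm bound uniform up to $\re s = \nh$ is not a one-liner; your algebraic factorization, $(I+\chi R_0\chi\, V)\,\chi R_V\chi = \chi R_0\chi$, is a slightly cleaner variant of the paper's factorization through $(1+VR_0\chr_\Omega)^{-1}$, and both are equivalent via $\chi V = V\chi = V$.
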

\begin{proof} 
Set $\Omega := \supp V$, and let $\chr_\Omega$ be the projector given by multiplication by the characteristic function of $\Omega$.  Since $(1 - \chr_\Omega)V = 0$, we can write
\[
1 + VR_0(s) = (1+VR_0(s)(1-\chr_\Omega)) (1+VR_0(s)\chr_\Omega).
\]
And then by inverting this expression we have the identity 
\[
(1+ VR_0(s))^{-1} = (1+VR_0(s)\chr_\Omega)^{-1} (1-VR_0(s)(1-\chr_\Omega)).
\]
This allows us to write the cutoff resolvent as
\begin{equation}\label{cut.rv0}
\chi R_V(s) \chi = \chi R_0(s) \chi (1+VR_0(s)\chr_\Omega)^{-1} (1-VR_0(s)(1-\chr_\Omega)\chi)
\end{equation}
(This trick works just as in the Euclidean case; see, e.g., \cite{Zworski:Res}.)

To estimate the terms involving $R_0(s)$, we can cite Guillarmou \cite[Prop.~3.2]{Gui:2005c}, which gives the estimate
\begin{equation}\label{R0.est}
\norm{\rho^{\frac12} R_0(s) \rho^{\frac12}} \le C \abs{s - \nh}^{-1},
\end{equation}
for $\re s > \nh-\tfrac18$, $s \ne \nh$.   Applying this to the the cutoff resolvent gives
\[
\norm{\chi R_0(\nh + ae^{i\theta}) \chi} \le Ca^{-1}.
\]
We can also apply \eqref{R0.est} to obtain, for $a$ sufficiently large, the estimates
\[
\norm{VR_0(\nh + ae^{i\theta})\chr_\Omega} \le \tfrac12 \quad\text{and}\quad
\norm{VR_0(\nh + ae^{i\theta})(1-\chr_\Omega)\chi}  \le \tfrac12.
\]
The claim then follows from \eqref{cut.rv0}.
\end{proof}

\bigbreak
\subsection{Scattering theory}

The scattering matrix $S_V(s)$ associated to the potential $V$ can be defined in the same way as for any asymptotically hyperbolic manifold.  We will recall the details rather briefly; see \cite{Borthwick:2010} for details and references.
 
Given $s \notin \calR_V \cup (\nh+\bbZ)$ and $f\in\cinf(S^n)$, there is a unique solution of $[\Delta+V-s(n-s)]u=0$ with the asymptotic
\begin{equation}\label{scatt.asymp}
u \sim \rho^{n-s} f + \rho^{s} f',
\end{equation}
for some $f'\in\cinf(S^n)$.  The scattering matrix, defined as the map $S_V(s): f\mapsto f'$, is a meromorphic family of pseudodifferential operators on $S^n$.  Note that by construction, the scattering matrix satisfies $S_V(n-s) = S_V(s)^{-1}$.

The relative scattering matrix $S_V(s)S_0(s)^{-1}$ is of determinant class, and the relative scattering determinant is defined as
\begin{equation}\label{tau.def}
\tau(s) := \det S_V(s)S_0(s)^{-1}.
\end{equation}
The reflection formula for the scattering matrix implies $\tau(s) \tau(n-s) = 1$.
This meromorphic function admits a Hadamard factorization over the resonance sets $\calR_V$ and $\calR_0$:
\begin{equation}\label{tau.factor}
\tau(s) = e^{q(s)} \frac{H_V(n-s)}{H_V(s)}  \frac{H_0(s)}{H_0(n-s)},
\end{equation}
where $q(s)$ is a polynomial of degree at most $n+1$, and
\[
H_V(s) := \prod_{\zeta\in\calR_V} \left( 1- \frac{s}{\zeta}\right)  \exp\left(\frac{s}{\zeta} + \dots + \frac{s^{n+1}}{(n+1)\zeta^{n+1}}\right).
\]
For  $V$ real-valued, the factorization \eqref{tau.factor} is a special case of \cite[Prop.~3.1]{Borthwick:2010}, and the extension to complex $V$ is straightforward.

\bigbreak
\section{Resonance counting formula}\label{rescount.sec}
One consequence of \eqref{tau.factor} is that we can count resonances with a contour integral over the scattering determinant.
Integrating $\tau'/\tau$ around a half-circle contour centered at $s = \nh$ yields
\[
N_V(t) - N_V(0) - N_0(t) - 2d_V(t) = \frac{1}{2\pi}  \int_{-t}^t \im \frac{\tau'}{\tau}(it') \>dt'
+ \frac{1}{2\pi} \int_{-\frac{\pi}2}^{\frac{\pi}2} t\>\del_t \log \abs{\tau(\nh + te^{i\theta})}\>d\theta,
\]
where $d_V(t)$ counts the number of resonances $\zeta \in \calR_V$ with $\re s>\nh$ and $\abs{\zeta - \nh} \le t$, which occur only when $\zeta(n-\zeta)$ is a discrete eigenvalue.  There are only finitely many discrete eigenvalues, so $d_V$ is bounded.

If we divide this contour integral by $t$ and integrate, we obtain the relative counting formula,
\begin{equation}\label{relcount}
\begin{split}
\tN_V(a) - \tN_0(a) & = \frac{n+1}{2\pi} \int_0^a \int_{-t}^{t} \im \frac{\tau'}{\tau}(it') \>dt'\>\frac{dt}{t} \\
&\qquad + \frac{n+1}{2\pi} \int_{-\frac{\pi}2}^{\frac{\pi}2} \log \abs{\tau(\nh + a e^{i\theta})} d\theta + O (\log a).
\end{split}
\end{equation}
For a general self-adjoint perturbation of $\Delta$, of the type considered in \cite{Borthwick:2010}, 
the first integral could be expressed in terms of the \emph{scattering phase}
$\sigma(t) :=  \tfrac{i}{2\pi} \log \tau(\nh + it)$, which would be real-valued in that case.
In the case of a metric perturbation there is a Weyl law \cite[Cor.~3.6]{Borthwick:2010} giving the asymptotic $\sigma(t) \sim at^{n+1}$, with $a$ proportional to the volume of the perturbation.

For potential scattering we would expect the scattering phase term to be of lower order, and that is indeed the case.
\begin{prop}\label{sigma.bound}
For $V \in L^\infty_{\rm cpt}(\bbH^{n+1},\bbC)$, we have
$$
\abs{\frac{\tau'}{\tau}(\nh + it)} \le C_V\brak{t}^{n-1},
$$
for $t \in \bbR$ sufficiently large, where $C_V$ depends only on $\norm{V}_\infty$ and $\supp V$.  For real $V$ this gives the scattering phase estimate $\sigma(t) = O(\brak{t}^{n})$.
\end{prop}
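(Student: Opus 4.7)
The plan is to obtain the bound from a Birman-Krein-type trace formula combined with the cutoff resolvent estimate of Proposition~\ref{res.est}.

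First I would establish an identity of the form
\[
\frac{\tau'}{\tau}(s) = c\,(2s-n)\,\tr\bigl[R_V(s)-R_0(s)\bigr] = -c\,(2s-n)\,\tr\bigl[R_V(s)\,V\,R_0(s)\bigr]
\]
for a universal constant $c$, valid for $s$ away from resonances and eigenvalues. The first equality is the asymptotically hyperbolic analogue of the Birman-Krein formula; it can be obtained by differentiating the Hadamard product \eqref{tau.factor} term-by-term and matching the result with the Mittag-Leffler expansion of $\tr[R_V(s)-R_0(s)]$. The second equality is just the resolvent identity \eqref{r0rv}. That $R_V-R_0$ is trace class is guaranteed by the compact support of $V$ together with the weighted $L^2$ mapping properties of $R_0$ and $R_V$ recorded in \S\ref{potsc.sec}.

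Next I would estimate $|\tr[R_V(s)VR_0(s)]|$ at $s=\nh+it$. Choose $\chi\in C^\infty_0(\bbH^{n+1})$ with $\chi\equiv 1$ on $\supp V$, so that $V=\chi V=V\chi$. After cyclic rearrangement, the trace equals $\tr\bigl[(\chi R_V(s)\chi)\,V\,(\chi R_0(s)\chi)\bigr]$. Proposition~\ref{res.est} supplies $O(t^{-1})$ operator-norm bounds on each cutoff resolvent. To upgrade this to a trace-norm estimate with the sharp power of $t$, I would replace one of the cutoff resolvents by a power $(\chi R_0(s)\chi)^k$ with $k>(n+1)/2$, using the resolvent equation to absorb the extra factors, and then invoke the standard heat-kernel/Weyl-law Schatten bound
\[
\bigl\|\chi R_0(\nh+it)^k\chi\bigr\|_{S^1} = O\bigl(t^{n+1-2k}\bigr).
\]
Distributing the $t$-decay across the resolvent factors with a Schatten Hölder inequality yields $|\tr[R_V V R_0](\nh+it)|=O(t^{n-2})$, and multiplying by $|2s-n|=2t$ gives the claimed bound $O(t^{n-1})$ on $\tau'/\tau$. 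The estimate $\sigma(t)=O(\brak{t}^n)$ in the real case follows by integration in $t$, since $\sigma'(t)=-\tfrac{1}{2\pi}(\tau'/\tau)(\nh+it)$.

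The main technical obstacle is this last trace-norm bookkeeping. A naive application of Proposition~\ref{res.est} alone only yields $O(t^{-2})$ for the operator norm of $\chi R_V V R_0\chi$, with no trace-class control; recovering a dimensional factor $t^{n+1}$ on top of this and then losing two powers of $t$ to the two resolvents requires a careful distribution of resolvent powers. This is the source of the improvement over the $o(t^{n+1})$ bound in \cite{Borthwick:2010} mentioned after Theorem~\ref{upper.thm}, and is presumably where the input of T.~Christiansen acknowledged in the introduction enters.
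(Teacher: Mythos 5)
Your proposal takes a genuinely different route from the paper's proof, and both of your two main technical steps --- the relative Birman--Krein identity and the Schatten bound on resolvent powers --- have real gaps.

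The central problem is the claimed estimate $\norm{\chi R_0(\nh+it)^k\chi}_{S^1} = O(t^{n+1-2k})$. What the local Weyl law actually controls is the \emph{algebraic trace} $\tr\bigl[\chi R_0(\nh+it)^k\chi\bigr]$, since that trace is (up to constants) the $(k-1)$-st derivative in the spectral parameter $\lambda = n^2/4+t^2$ of a local spectral density scaling like $\lambda^{(n-1)/2}$. The trace \emph{norm} is the sum of singular values, a strictly larger quantity in general, and the operator $\chi R_0(\nh+it)^k\chi$ is not normal for $t\ne 0$, so there is no automatic passage from trace to trace norm. You would need an independent singular-value estimate, and the heat-kernel representation $(\Delta-\lambda)^{-k}=\tfrac{1}{(k-1)!}\int_0^\infty u^{k-1}e^{\lambda u}e^{-u\Delta}du$ does not converge for $\lambda$ in the essential spectrum, so it does not supply one. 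The subsidiary step ``replace one of the cutoff resolvents by a power $(\chi R_0 \chi)^k$ using the resolvent equation to absorb the extra factors'' is also not straightforward: iterating $R_V = R_0 - R_0 V R_V$ produces strings $R_0 V R_0 V\cdots$ with $V$ interleaved and cutoffs only at the ends, not a clean power of $\chi R_0\chi$.

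The secondary gap is the relative Birman--Krein identity $\tau'/\tau(s)=c(2s-n)\tr[R_V(s)-R_0(s)]$. You propose to derive it from the Hadamard factorization \eqref{tau.factor} and a Mittag--Leffler expansion, but this requires controlling the unknown polynomial $q(s)$ (degree up to $n+1$) and justifying term-by-term differentiation against the resolvent trace, and for $n$ odd one must carefully account for the model resonance set $\calR_0$. None of this is automatic, and the paper does not establish such an identity.

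The paper's actual argument sidesteps both difficulties. From Lemma~\ref{relS.lemma} one has $S_V(s)S_0(s)^{-1}=1+T(s)$ with $T(s)=(2s-n)E_0(s)^t\chr_\Omega(1+VR_0(s)\chr_\Omega)^{-1}VE_0(n-s)$, so $\tau(s)=\det(1+T(s))$ and, by Froese's identity and the reflection $S_V(n-s)=S_V(s)^{-1}$, $\tau'/\tau(s)=\tr\bigl[(1+T(n-s))T'(s)\bigr]$. Proposition~\ref{res.est} and Guillarmou's estimate \eqref{R0.est} give $\norm{1+T(\nh+it)}=O(1)$, reducing the problem to bounding $\norm{T'(\nh+it)}_1$. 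The trace norm is then controlled using Hilbert--Schmidt bounds for the \emph{explicit} Poisson operator, $\norm{\chi E_0(\nh+it)}_2 = O(|t|^{n/2-1})$ (Lemma~\ref{E0.hsnorm}), read off directly from the kernel formula; with the factor $|2s-n|=2|t|$ this gives $O(\brak{t}^{n-1})$. The whole point of routing through $E_0$ is that its Hilbert--Schmidt norm is an elementary integral, so no abstract Schatten estimates on resolvent powers are needed.
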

We will defer the proof for a moment to observe the consequences for the resonance counting formula.  
Applying Proposition~\ref{sigma.bound} to \eqref{relcount}, and using the asymptotics for $N_0(t)$ given in \eqref{N0.asymp}, 
yields the following:
\begin{prop}\label{count.prop}
For $V \in L^\infty_{\rm cpt}(\bbH^{n+1}, \bbC)$, 
\[
\tN_V(a) = A^{(0)}_n a^{n+1} +  \frac{n+1}{2\pi} \int_{-\frac{\pi}2}^{\frac{\pi}2} \log \abs{\tau(\nh + a e^{i\theta})} d\theta + O(a^{n}).
\]
\end{prop}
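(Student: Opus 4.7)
The plan is to assemble the two ingredients already in hand: the relative counting formula \eqref{relcount} supplies the decomposition, Proposition~\ref{sigma.bound} controls the scattering-phase integral, and \eqref{N0.asymp} controls the model count $\tN_0(a)$. The proof is mostly bookkeeping once those three pieces are invoked.

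First I would dispose of the double integral on the right-hand side of \eqref{relcount}. By Proposition~\ref{sigma.bound}, for $\abs{t'}$ sufficiently large one has $\abs{\tau'/\tau(\nh + it')} \le C_V \brak{t'}^{n-1}$; for $\abs{t'}$ bounded the ratio $\tau'/\tau$ has only finitely many simple poles, coming from zeros and poles of $\tau$ on the critical line, and these contribute a bounded amount to the integral. Consequently
\[
\int_{-t}^{t} \abs{\im \tfrac{\tau'}{\tau}(\nh + it')}\,dt' = O(t^{n}) + O(1).
\]
Dividing by $t$ and integrating from $0$ to $a$ yields $O(a^n) + O(\log a) = O(a^n)$ for this double-integral term.

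Second I would evaluate $\tN_0(a)$. For $n$ odd, $\calR_0 \subset -\bbN_0$, so $\nh \notin \calR_0$ and $N_0(0) = 0$; substituting $N_0(t) = A^{(0)}_n t^{n+1} + O(t^n)$ from \eqref{N0.asymp} into the definition of $\tN_0$ gives the elementary estimate
\[
\tN_0(a) = (n+1) \int_0^a \bigl( A^{(0)}_n t^n + O(t^{n-1}) \bigr)\,dt = A^{(0)}_n a^{n+1} + O(a^n).
\]
For $n$ even $\calR_0 = \emptyset$, so $\tN_0 \equiv 0$, which is consistent with $A^{(0)}_n = 0$ in that case. In either case $\tN_0(a) = A^{(0)}_n a^{n+1} + O(a^n)$.

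Combining these two observations with \eqref{relcount}, and absorbing the built-in $O(\log a)$ error into $O(a^n)$, produces the claimed formula. There is no genuine obstacle to this step — the real analytic work has already been absorbed into Proposition~\ref{sigma.bound} (which in turn relies on the cutoff-resolvent estimate of Proposition~\ref{res.est}); the only mild subtlety is confirming that the finitely many singularities of $\tau'/\tau$ near the center of the critical circle do not spoil the $O(a^n)$ estimate of the double integral, which is handled by the boundedness of their contribution to the inner integral.
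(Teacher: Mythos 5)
Your proposal is correct and follows exactly the paper's route: the paper derives Proposition~\ref{count.prop} in one line by applying Proposition~\ref{sigma.bound} to the double-integral term in \eqref{relcount} and substituting the asymptotics \eqref{N0.asymp} for $\tN_0(a)$. Your additional remarks — the explicit $O(a^n)$ bookkeeping for the inner integral and the observation that the finitely many singularities of $\tau'/\tau$ near $s=\nh$ contribute only boundedly — are details the paper leaves implicit, and they are handled correctly.
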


\bigbreak
\subsection{Scattering phase estimate}
In this subsection we will develop the proof of Proposition~\ref{sigma.bound}.
If we let $\Omega := \supp V \subset \bbH^{n+1}$, the resolvent identity \eqref{r0rv} implies the relation
\begin{equation}
\left(1 - VR_V(s) \chr_\Omega\right)\left(1+VR_0(s) \chr_\Omega\right) = 1. 
\end{equation}
Proposition~\ref{res.est} implies that $\norm{VR_0(s) \chr_\Omega} <1$ for $|s-\nh|$ sufficiently large,
in which case we can write
\begin{equation}\label{VR.relation}
1 - VR_V(s) \chr_\Omega = \left(1+VR_0(s) \chr_\Omega\right)^{-1}.
\end{equation}

\begin{lemma}\label{relS.lemma}
The scattering matrices satisfy a relative scattering formula
$$
S_V(s) S_0(s)^{-1} = 1 + (2s-n) E_0(s)^t \chr_\Omega \left( 1 + VR_0(s)\chr_\Omega \right)^{-1} V E_0(n-s),
$$
valid for $\re s \ge n/2$ with $|s-n/2|$ sufficiently large.
\end{lemma}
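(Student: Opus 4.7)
The plan is to extract the relative scattering formula from the boundary asymptotic of the generalized eigenfunction of $\Delta+V-s(n-s)$. For $f\in C^\infty(S^n)$, I would construct this eigenfunction as
\[
E_V(s)f := E_0(s)f - R_V(s)VE_0(s)f.
\]
A direct verification shows $(\Delta+V-s(n-s))E_V(s)f = 0$: the operator applied to $E_0(s)f$ produces $VE_0(s)f$, which is cancelled by the contribution from $R_V(s)VE_0(s)f$. Moreover, since $VE_0(s)f$ is compactly supported in $\Omega$, the correction term has only a $\rho^s$ leading behaviour at infinity (for $\re s \ge \nh$ with $|s-\nh|$ large), so $E_V(s)f \sim \rho^{n-s}f + \rho^s S_V(s)f$, identifying $S_V(s)f$ as the $\rho^s$-coefficient of $E_V(s)f$.

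Next, I would reshape the correction using $R_V(s) = R_0(s)(1+VR_0(s))^{-1}$ together with \eqref{VR.relation}. Because $V = \chr_\Omega V$, the operator $(1+VR_0(s))^{-1}$ preserves functions supported in $\Omega$, so inserting $\chr_\Omega$ is legitimate and yields
\[
R_V(s)VE_0(s)f = R_0(s)\chr_\Omega\bigl(1+VR_0(s)\chr_\Omega\bigr)^{-1}VE_0(s)f,
\]
in which the argument of the outer $R_0(s)$ is compactly supported in $\Omega$. A standard Green's-identity pairing of $R_0(s)g$ against a Poisson test solution $E_0(s)\phi$ (for such compactly supported $g$) then gives the boundary asymptotic whose $\rho^s$-coefficient is expressed via $E_0(s)^t g$, with the prefactor $(2s-n)$ arising from the $(2s-n)\int(ad-bc)$ boundary pairing. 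Substituting into $E_V(s)f - E_0(s)f$, multiplying on the right by $S_0(s)^{-1}$, and using the identity $E_0(s)S_0(s)^{-1} = E_0(n-s)$ (checked by matching the two boundary coefficients via $S_0(n-s) = S_0(s)^{-1}$) converts $VE_0(s)S_0(s)^{-1}$ into $VE_0(n-s)$ and delivers the claimed formula.

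The main technical obstacle is the bookkeeping of normalization constants in the Green's-identity step: the precise $(2s-n)$ factor in the stated identity depends on the convention used for the transpose Poisson operator $E_0(s)^t$, and care is needed to produce exactly this factor rather than a variant (e.g.\ $(2s-n)^{-1}$ or a sign change). A more routine concern is the analytic validity of $(1+VR_0(s)\chr_\Omega)^{-1}$ in the regime $\re s \ge \nh$, $|s-\nh|$ large; this is already furnished by Proposition~\ref{res.est}, which gives the required norm bound $\norm{VR_0(s)\chr_\Omega}<1$ in that regime, and meromorphic continuation to arbitrary $s$ then follows by analytic Fredholm.
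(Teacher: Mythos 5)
Your proposal is correct and essentially follows the paper's route. The paper works at the level of the iterated resolvent identity $R_V = R_0 - R_0\chr_\Omega(1-VR_V\chr_\Omega)VR_0$ and then multiplies Schwartz kernels by $(2s-n)(\rho\rho')^{-s}$ and takes the double boundary limit to obtain $S_V-S_0$ in one step, whereas you build the perturbed Poisson operator $E_V(s)=E_0(s)-R_V(s)VE_0(s)$ and take a single $\rho^s$-coefficient; the actual ingredients coincide — the support trick inserting $\chr_\Omega$, the identity \eqref{VR.relation}, the relation between the $\rho^s$-boundary coefficient of $R_0(s)$ applied to compactly supported data and $E_0(s)^t$, and the closing substitution $E_0(s)S_0(s)^{-1}=E_0(n-s)$. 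Your flagged concern about the $(2s-n)$ normalization in the boundary pairing is reasonable, and the paper's kernel-limit step relies on the same bookkeeping.
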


\begin{proof}
Using equation \eqref{r0rv} and its transpose we have
\[
\begin{split}
R_V(s) & = R_0(s) - R_0(s)VR_V(s) \\
& = R_0(s) - R_0(s) V R_0(s) + R_0(s)VR_V(s)VR_0(s) \\
& = R_0(s) - R_0 (s)\chr_\Omega \left( 1 + VR_V(s) \chr_\Omega \right) V R_0(s)
\end{split}
\]
The formulas for the scattering matrix can then be derived by multiplying
the kernels by $(2s-n) (\rho \rho')^{-s}$ and taking the limit as $\rho, \rho' \to 0$.
This gives
$$
S_V(s) = S_0(s) - (2s-n) E_0(s)^t \chr_\Omega \left( 1 - VR_V \chr_\Omega \right) V E_0(s).
$$
The result follows after applying $S_0(s)^{-1}$ on the right and using \eqref{VR.relation}.
\end{proof}

In order to apply Lemma~\ref{r0rv} we need some estimates on Hilbert-Schmidt norms of the Poisson operator.  For this estimate
it is easiest to write the Poisson kernel in the $\bbB^{n+1}$ model.  Recall that we use the boundary
defining function $\rho = 2e^{-r}$, where $r$ is hyperbolic distance from the origin.  The normalizing
factor is included so that the induced metric on $\del \bbB^{n+1} = S^n$ is the standard sphere metric.
For this boundary defining function, the Poisson kernel is given by
$$
E_0(s;u,\omega) = 2^{-s-1} \pi^{-n/2} \frac{\Gamma(s)}{\Gamma(s-\frac{n}2+1)} \left( \frac{1 - \abs{u}^2}{\abs{u - \omega}^2}
\right)^s,
$$
where $u \in \bbB^{n+1}$, $\omega \in S^n$.

\begin{lemma}\label{E0.hsnorm}
Let $\chi \in L^\infty_{\rm cpt}(\bbB^{n+1})$.
For $t \in \bbR$, the Poisson operator $E_0(\nh+it) : L^2(S^n) \to L^2(\bbB^{n+1})$ satisfies
$$
\norm{\chi E_0(\nh+it)}_2 \le C \abs{t}^{\frac{n}2-1},
$$ 
and
$$
\norm{\chi E_0'(\nh+it)}_2 \le C \abs{t}^{\frac{n}2-1}.
$$
\end{lemma}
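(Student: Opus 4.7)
The plan is to compute both Hilbert--Schmidt norms directly from the explicit Poisson kernel formula. Write
\[
\norm{\chi E_0(s)}_2^2 = \int_{\bbB^{n+1}} \int_{S^n} \abs{\chi(u)}^2 \abs{E_0(s;u,\omega)}^2 \>d\omega\>dv_g(u),
\]
with the hyperbolic volume element $dv_g = (2/(1-\abs{u}^2))^{n+1}\,du$ in the ball model. For $s = \nh+it$, the power factor contributes $\bigl((1-\abs{u}^2)/\abs{u-\omega}^2\bigr)^{n/2}$ in absolute value, and the prefactor becomes $\Gamma(\nh+it)/\Gamma(1+it)$ up to a bounded multiplier.

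First I would apply Stirling's asymptotic $\abs{\Gamma(\sigma+it)} \sim \sqrt{2\pi}\,\abs{t}^{\sigma-\frac12}\,e^{-\pi\abs{t}/2}$ (uniformly for $\sigma$ in a bounded set) to get
\[
\left| \frac{\Gamma(\nh+it)}{\Gamma(1+it)} \right| \sim C \abs{t}^{\frac{n}2-1}.
\]
Squaring and combining with the volume element, the $(1-\abs{u}^2)^n$ in the kernel and $(1-\abs{u}^2)^{-(n+1)}$ in $dv_g$ cancel down to a single factor $(1-\abs{u}^2)^{-1}$, so
\[
\abs{\chi(u) E_0(\nh+it;u,\omega)}^2\,dv_g(u) \le C\abs{t}^{n-2}\,\abs{\chi(u)}^2\,\frac{du}{(1-\abs{u}^2)\abs{u-\omega}^{2n}}.
\]
The key observation is that $\supp \chi$ is a compact subset of $\bbB^{n+1}$, so on it $\abs{u} \le R < 1$, making $(1-\abs{u}^2)^{-1}$ bounded and $\abs{u-\omega} \ge 1-R$ uniformly in $\omega \in S^n$. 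The remaining integral is therefore bounded by a constant depending only on $\chi$, giving $\norm{\chi E_0(\nh+it)}_2 \le C\abs{t}^{\frac{n}2-1}$.

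For the derivative, differentiating in $s$ gives
\[
E_0'(s;u,\omega) = E_0(s;u,\omega)\left[ \psi(s) - \psi(s-\nh+1) - \log 2 + \log\!\left(\frac{1-\abs{u}^2}{\abs{u-\omega}^2}\right) \right],
\]
where $\psi = \Gamma'/\Gamma$. At $s=\nh+it$, the digamma difference $\psi(\nh+it) - \psi(1+it) = O(1/\abs{t})$ by the standard asymptotic $\psi(s+a) - \psi(s) \sim a/s$, and the logarithm term is bounded on $\supp\chi \times S^n$ by the same compactness argument as above. Thus the derivative picks up only a bounded multiplicative factor, yielding the same bound. The only delicate point is the correct bookkeeping of the power of $\abs{t}$ from Stirling; the compact support of $\chi$ removes all potential singularities, so no genuine obstacle arises.
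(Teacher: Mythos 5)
Your proof is correct and follows essentially the same route as the paper: a direct computation of the Hilbert--Schmidt norm from the explicit Poisson kernel, with the Gamma-quotient $\abs{\Gamma(\nh+it)/\Gamma(1+it)} \le C\abs{t}^{\nh-1}$ handled by Stirling and the spatial integral bounded using the compact support of $\chi$. Your treatment of the derivative (digamma difference $O(1/\abs{t})$ plus a bounded logarithm on $\supp\chi\times S^n$) is exactly the ``similar'' argument the paper leaves implicit.
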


\begin{proof}
The Hilbert-Schmidt norm is calculated directly:
$$
\norm{\chi E_0(\nh+it)}_2  = c_n \abs{\frac{\Gamma(\frac{n}2+it)}{\Gamma(1+it)}} \left[\int_{S^n} \int_{\bbB^{n+1}} 
\chi(u)^2 \left( \frac{1 - \abs{u}^2}{\abs{u - \omega}^2} \right)^n \>\>dV(u)\> d\omega \right]^{\frac12}
$$
Because $\chi$ is compactly supported, there is no convergence issue and
the term in brackets is just a constant.  The result follows from 
$$
\abs{\frac{\Gamma(\frac{n}2+it)}{\Gamma(1+it)}} \le C \abs{t}^{\frac{n}2-1},
$$
which is easily deduced from Stirling's formula.  The derivative estimate is similar.
\end{proof}

\bigbreak
\begin{proof}[Proof of Proposition~3.1]
By virtue of Lemma~\ref{relS.lemma} we can write this as
$$
\tau(s) := \det (1+ T(s)),
$$
where 
$$
T(s) := (2s-n) E_0(s)^t \chr_\Omega \left( 1 + VR_0(s)\chr_\Omega \right)^{-1} V E_0(n-s).
$$

Following the argument from Froese \cite[Lemma~3.3]{Froese:1998}, we will estimate the derivative
\begin{equation}\label{logtau.deriv}
\frac{\tau'}{\tau}(s) = \tr \Bigl[ (1+T(n-s)) T'(s) \Bigr].
\end{equation}
For $V$ real, $S_V(s) S_0(s)^{-1}$ is unitary for $\re s = \nh$, so that in this case $\norm{1+T(n-s)}=1$.
For a complex potential we need a separate estimate.  Note that for $\re s = \nh$,
\[
[\chr_\Omega E_0(s)]^* = E_0(n-s)^t \chr_\Omega,
\]
and we have the general relation (for any $s$),
\[
R_0(s) - R_0(n-s) = (n-2s) E_0(s) E_0(n-s)^t.
\]
Thus we can estimate, for $\re s = \nh$ with $\abs{s-\nh}$ sufficiently large,
\[
\norm{\chr_\Omega E_0(s)}^2 = \frac{1}{\abs{n-2s}} \norm{\chr_\Omega \bigl(R_0(s) - R_0(n-s)\bigr) \chr_\Omega}.
\]

The model resolvent estimate \eqref{R0.est} from Guillarmou \cite[Prop.~3.2]{Gui:2005c} thus 
implies that
\[
\norm{\chr_\Omega E_0(\nh + it)} = O(\abs{t}^{-1}),
\]
for $\abs{t}$ large, and also that
\begin{equation}\label{vr.inv}
\norm{\left( 1 + VR_0(\nh+it)\chr_\Omega \right)^{-1}} = O(1),
\end{equation}
for $\abs{t}$ sufficiently large.  We conclude that
\begin{equation}\label{onet.est}
\norm{1+T(\nh+it)} = O(1).
\end{equation}

By using \eqref{onet.est} with \eqref{logtau.deriv},
we can bound the derivative of the scattering phase by a trace norm,
$$
\abs{\frac{\tau'}{\tau}(\nh+it)} \le C \norm{T'(\nh+it)}_1.
$$
To control the trace norm, we have the Hilbert-Schmidt estimates on $E_0(n/2 \pm it)$
and derivatives from Lemma~\ref{E0.hsnorm}.  
Since Guillarmou \cite[Prop.~3.2]{Gui:2005c} proves that the estimate \eqref{R0.est} also holds with $R_0(s)$ 
replaced by the derivative $R_0'(s)$, we can estimate
$$
\norm{\del_t \left( 1 + VR_0(\nh+it)\chr_\Omega \right)^{-1}} = O(1),
$$
for $\abs{t}$ sufficiently large.
Putting these together (and noting the extra factor of $2s-n = 2it$) we obtain
$$
\norm{T'(\nh+it)}_1 = O(\brak{t}^{n-1}),
$$
and the result follows.
\end{proof}

\bigbreak
\subsection{General scattering determinant estimate}\label{scdet.sec}

Sharp upper bounds for $\abs{\tau(s)}$ were provided in Borthwick \cite[Prop.~5.4]{Borthwick:2010}, for a more general class of compactly supported ``black box'' perturbations of $\Delta$.  However, if we restrict to potential scattering we can improve the error estimate.  (In the Euclidean case this improvement was established by Dinh-Vu \cite{DV:2012}.)  
Theorem~\ref{upper.thm} follows immediately from the counting formula of 
Proposition~\ref{count.prop} and the following:
\begin{prop}\label{tau.upper}
Assume that the support of $V$ is contained within a ball of radius $r_0$.
For $\dist(ae^{i\theta},\bbN+\tfrac12)>a^{-\beta}$ for some $\beta>0$ and $\abs{\theta} \le \pi/2$ we have
$$
\log\abs{\tau(\nh+ae^{i\theta})} \le h_{r_0}(\theta) a^{n+1} + O(a^{n} \log a),
$$ 
uniformly for $\abs{\theta} \le \pi/2$, with $h_{r_0}(\theta)$ the indicator function defined in \eqref{indicator.def}.
\end{prop}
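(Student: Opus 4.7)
The starting point is the identity $\tau(s)=\det(1+T(s))$ from Lemma~\ref{relS.lemma}, with
\[
T(s) := (2s-n)\,E_0(s)^t\,\chr_\Omega\,(1+VR_0(s)\chr_\Omega)^{-1}\,V\,E_0(n-s),
\]
and the goal is to control the determinant via Weyl's majorization inequality $\log|\det(1+T)|\le\sum_j\log(1+\mu_j(T))$. Placing the origin of $\bbH^{n+1}$ inside $\supp V$ and using the $\bbB^{n+1}$ model, $E_0(s)$ diagonalises in the spherical-harmonic basis on $S^n$:
\[
E_0(s)\,Y_{\ell m}(r,\omega) = \phi_\ell(s,r)\,Y_{\ell m}(\omega),
\]
where $\phi_\ell(s,\cdot)$ has an explicit hypergeometric expression and the multiplicity of degree $\ell$ is $m_\ell\sim 2\ell^{n-1}/\Gamma(n)$. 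By Proposition~\ref{res.est}, the factor $(1+VR_0(s)\chr_\Omega)^{-1}$ is uniformly bounded for $|s-\nh|$ large, so it can be absorbed into a global constant, and the analysis of $\mu_j(T(s))$ reduces to estimating $\chr_\Omega \phi_\ell(s,\cdot)$ and $\chr_\Omega \phi_\ell(n-s,\cdot)$ for all $\ell\ge 0$.

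The central analytic input is a uniform asymptotic for the radial Poisson coefficient. Setting $\alpha := (s-\nh)/\ell$, steepest-descent analysis of the hypergeometric integral representation of $\phi_\ell$ should yield
\[
\log|\phi_\ell(s,r_0)| = \tfrac{\ell}{2}\,H(\alpha,r_0) + O(\log\ell),
\]
with $H$ as in \eqref{H.def}; the two real parts in the definition of $H$ correspond respectively to the saddle-point exponent and the algebraic prefactor in the uniform expansion. The hypothesis $\dist(ae^{i\theta},\bbN+\tfrac12)>a^{-\beta}$ keeps $s$ away from the values at which the normalising factor $\Gamma(s)/\Gamma(s-\nh+1)$ in the Poisson kernel has spurious poles or zeros, so the asymptotic holds uniformly on the contour. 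Combining this with Weyl's inequality and the multiplicities $m_\ell$ produces
\[
\log|\tau(\nh+ae^{i\theta})|\le \sum_{\ell\ge 1} m_\ell\,\ell\,[H(ae^{i\theta}/\ell,r_0)]_+ + O(a^n\log a),
\]
with only the leading singular value in each degree-$\ell$ channel contributing at the principal order.

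The change of variable $x = a/\ell$ turns the sum into a Riemann sum of step $|x|^2/a$ for the integral
\[
\frac{2a^{n+1}}{\Gamma(n)}\int_0^\infty \frac{[H(xe^{i\theta},r_0)]_+}{x^{n+2}}\,dx = h_{r_0}(\theta)\,a^{n+1}.
\]
The integrand is supported away from $x=0$ (since $H(\alpha,r_0)\to\log\tanh^2(r_0/2)<0$ as $\alpha\to 0$) and decays as $1/x^{n+1}$ at infinity (since $H(\alpha,r_0)\sim 2r_0\re\alpha$ for large $|\alpha|$), so the integral is finite. The $O(a^n\log a)$ error collects the per-mode $O(\log\ell)$ WKB losses summed against $m_\ell$ over the $\ell\lesssim a$ modes that contribute, plus the Riemann-sum discretisation error and the logarithmic loss from the excluded set.

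The main obstacle is the uniform WKB analysis for $\phi_\ell(s,r_0)$ with $O(\log\ell)$ precision across the turning point $|\alpha|\approx 1/\sinh r_0$ and throughout the sector $|\theta|\le\tfrac\pi2$, in particular managing the branch of $\log(\alpha^2-1)$ that appears in $H$. The improvement over the $o(a^{n+1})$ error of \cite{Borthwick:2010} comes precisely from this explicit block decomposition specific to potential scattering: combined with the uniform resolvent bound of Proposition~\ref{res.est}, it allows the subleading Gamma ratios and WKB corrections to be tracked explicitly, rather than absorbed into the abstract determinant estimates that are necessary for general black-box perturbations.
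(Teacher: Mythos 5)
Your overall plan — decompose $T(s)$ over spherical harmonics, derive a WKB/Legendre asymptotic for the Poisson coefficients, and convert the resulting sum to a Riemann sum recovering $h_{r_0}(\theta)$ — is the same strategy as the paper's, but your starting formula is different. The paper uses the commutator identity from \cite[Lemma~4.1]{Borthwick:2010}, $S_V S_0^{-1}-1 = (2s-n)E_0(s)^t[\Delta,\chi_2]R_V(s)[\Delta,\chi_1]E_0(n-s)$, with cutoffs $\chi_j$ at scale $1/a$ outside $\supp V$; this produces the factored quantities $\lambda_l(s)$ involving integrals of $|P_\nu^{-k}|^2$ over thin annuli, at the cost of a norm factor $A(s)\le Ca^5$. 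You instead start from $\tau(s)=\det(1+T(s))$ via Lemma~\ref{relS.lemma}, with $\chr_\Omega$ in place of the commutators. That is a genuinely different and arguably cleaner decomposition, since the middle operator $(1+VR_0\chr_\Omega)^{-1}V$ is $O(1)$ by Proposition~\ref{res.est}, avoiding the paper's $a^5$ factor. To make it rigorous you would still need the singular-value factorization $\sigma_{2j-1}(B_1B_2)\le\sigma_j(B_1)\sigma_j(B_2)$ to reduce to singular values of $\chr_\Omega E_0(s)$ and $\chr_\Omega E_0(n-s)$, since $(1+VR_0\chr_\Omega)^{-1}$ does not preserve degree-$\ell$ channels for non-radial $V$; this step is only gestured at.

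The concrete gap is the claimed asymptotic $\log|\phi_\ell(s,r_0)| = \tfrac{\ell}{2}H(\alpha,r_0)+O(\log\ell)$. This is not correct: the indicator $H$ defined in \eqref{H.def} is, by design, the exponent of the \emph{combined} quantity $|2s-n|\cdot|\phi_\ell(s,r_0)|\cdot|\phi_\ell(n-s,r_0)|$ once the Gamma prefactors $\Gamma(l+s)/\Gamma(s-\nh+1)$ and $\Gamma(l+n-s)/\Gamma(n-s-\nh+1)$ are handled by Stirling and the reflection formula $\Gamma(1+k\alpha)\Gamma(1-k\alpha)=\pi k\alpha/\sin\pi k\alpha$; see \eqref{Stir.H} and \eqref{H.2pp}. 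A single Poisson coefficient has exponent $k\re[\phi(\alpha,r_0)-p(\alpha)+(1+\alpha)\log(1+\alpha)-\alpha\log\alpha]$, which is not $\tfrac{k}{2}H(\alpha,r_0)$ (check $\alpha=1$: the Gamma contribution is $2\log 2$ vs.\ $\log 2$ from $\tfrac12 H$). Indeed $H$ has the symmetry $H(-\alpha,r)=H(\alpha,r)+2\pi\im\alpha$ reflecting the $s\leftrightarrow n-s$ pairing, which cannot come from a single coefficient. Your description of the two pieces of \eqref{H.def} as ``saddle-point exponent'' and ``algebraic prefactor'' is also a misreading: the second $\log|\cdot|$ term is part of $2\re[\phi(\alpha,r_0)-p(\alpha)]$, not a WKB prefactor. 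The remedy is to estimate $|P_\nu^{-k}(\cosh r_0)|$ by $\frac{1}{\Gamma(k+1)}e^{k\re[\phi(\alpha,r_0)-p(\alpha)]}$ (as in Proposition~\ref{PQ.bounds}), estimate the Gamma ratios separately, and only then combine with the $n-s$ side; only the product gives $kH$. Everything downstream of that corrected estimate — the Riemann sum, the $x=a/\ell$ substitution, the $O(a^n\log a)$ error from the $O(\log a)$ per-mode loss summed against $m_\ell\asymp\ell^{n-1}$ over $\ell\lesssim a$ — is in line with the paper's argument.
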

\begin{proof}
Set $r_j = r_0 + \tfrac{j}{a}$ for $j=1,2,3$.
Let $\psi\in\cinf(\bbR)$ be a cutoff function with $\psi(t)=1$ for $t\le 0$ and $\psi(t)=0$ for $t\ge 1$.  Then set
$\chi_j(r) = \psi(a(r-r_j))$, so that $\chi_j = 1$ for $r\le r_j$ and $\chi_j = 0$ for $r \ge r_{j+1}$.
Then from the proof of \cite[Lemma~4.1]{Borthwick:2010} we have
\[
S_V(s)S_0(s)^{-1} = 1+ (2s-n) E_0(s)^t [\Delta, \chi_2] R_V(s) [\Delta,\chi_1] E_0(n-s),
\]
where $E_0(s)$ is the unperturbed Poisson operator on $\bbH^{n+1}$.  
As in the proof of \cite[Lemma~5.2]{Borthwick:2010}, this formula leads to an estimate
\begin{equation}\label{tau.sum1}
\log \abs{\tau(s)} \le \sum_{l=0}^\infty \mu_n(l) \log (1 + A(s) \lambda_l(s)).
\end{equation}
Here $\mu_n(l)$ is the multiplicity of spherical harmonics of weight $l$ in dimension $n$,
\begin{equation}\label{hl.def}
\mu_n(l) := \frac{2l+n-1}{n-1} \binom{l+n-2}{n-2}.
\end{equation}
The factor $A(s)$ is a cutoff resolvent norm,
\[
A(s) := \bigl\Vert [\Delta_0, \chi_2] R_V(s) [\Delta_0,\chi_1] \bigr\Vert.
\]
Finally, the $\lambda_l$'s, calculated from explicit formulas for the Fourier decomposition $E_0(s)$, 
\[
\begin{split}
\lambda_l(s) &= \Bigl| \sin \pi (s-\nh) \> \Gamma(l + s) \Gamma(l+n-s) \Bigr| 
\left[ \int_{r_1}^{r_2} \Bigl|P_{\nu}^{-k}(\cosh r)\Bigr|^2\>\sinh r\>dr \right]^{\frac12} \\
&\qquad \times\left[ \int_{r_2}^{r_3} \Bigl|P_{\nu}^{-k}(\cosh r)\Bigr|^2\>\sinh r\>dr\right]^{\frac12},
\end{split}
\]
where $k := l + \tfrac{n-1}2$ and $\nu:= s-\frac{n+1}2$.

The key to the improved error in the potential scattering case is our ability to estimate $R_0(s)$ for 
$\re s \ge \nh$, as in Proposition~\ref{res.est}.  Using a rather general commutator argument (see, e.g., \cite[Lemma~9.8]{Borthwick}), we can derive from Proposition~\ref{res.est} the following bound:  if $\psi_1,\psi_2 \in \cinf(\bbH^{n+1})$ are cutoffs with disjoint supports, then for $a$ sufficiently large,
\[
\norm{\psi_1 R_0(\nh + ae^{i\theta}) \psi_2}_{\mathcal{L}(H^0,H^2)} \le Ca
\]
uniformly for $\abs{\theta} \le \tfrac{\pi}2$.   Then, noting that $\norm{\del_r^m \chi_j}_{\infty} = O(a^{m})$, we have
\begin{equation}\label{A.bd}
A(\nh + ae^{i\theta}) \le Ca^5.
\end{equation}

From \cite[Lemma~5.3]{Borthwick:2010} we also quote the estimate
\begin{equation}\label{lambda.bd}
\log \lambda_l(\nh+ae^{i\theta}) \le k H\left(\frac{ae^{i\theta}}{k}, r_3\right) + C_{r_0} \log k + C_\beta \log a,
\end{equation}
valid for $k>0$, $\abs{\theta} \le \tfrac{\pi}2$, 
and $\dist(ae^{i\theta},\bbN+\tfrac12)> a^{-\beta}$.  
The statement in \cite{Borthwick:2010} assumed that $a \in \bbN$, but that condition was just a simple way to avoid the poles of $\abs{\tan (\pi a e^{i\theta})}$.   We can easily extend the estimate by noting that $\abs{\tan \pi z} \le 1 + \dist(z,\bbZ+\tfrac12)^{-1}$.

Note that the case $k=0$ occurs only if $n=1$.  In this case, the complicated Legendre function bounds used in \cite{Borthwick:2010} can be replaced by a simpler estimate:
\[
P^{0}_{\nu -1/2}(\cosh r) = (2\pi \sinh r)^{-\frac12} (e^{\nu r} + i e^{-\nu r}) 
(1 + O(\brak{\nu}^{-1})),
\]
valid for fixed $r>0$ and $\arg \nu \in [0, \frac{\pi}2]$.

The remainder of the proof is similar to that of \cite[Lemma~5.3]{Borthwick:2010}, with some modifications to improve the error term.  By conjugation, it suffices to consider $\theta \in [0,\tfrac{\pi}2]$.  Let $x = \varrho(\theta)$ be the implicit solution of $H(xe^{i\theta}, r_0) = 0$, as illustrated in Figure~\ref{hplot.fig}.
\begin{figure}
\begin{center}
\begin{overpic}{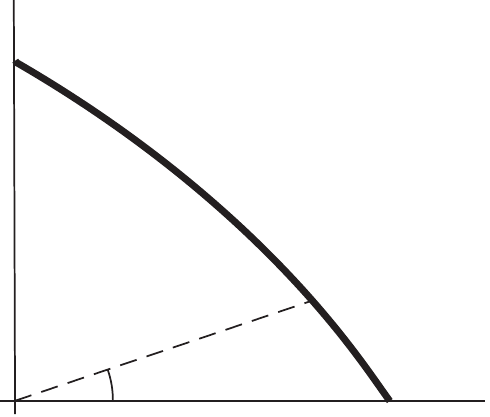}
\put(25,17){$\varrho(\theta)$}
\put(55,55){$H>0$}
\put(10,40){$H<0$}
\end{overpic}
\end{center}
\caption{Positive and negative regions for $H(\alpha,r)$, shown for $r = 1$.  The curve $\{H=0\}$ is parametrized in polar coordinates by $\varrho$.}\label{hplot.fig}
\end{figure}
We will use $\varrho(\theta)$ to subdivide the sum on the right side of 
\eqref{tau.sum1} into several pieces, the first of which is
\[
\Sigma_+ := \sum_{l:\>0< k \le a/\varrho(\theta)} \mu_n(l) \log (1 + A(s) \lambda_l(s)).
\]
Using the estimate,
\[
\mu_n(l) = \frac{2k^{n-1}}{\Gamma(n)} + O(k^{n-2}),
\]
with \eqref{A.bd} and \eqref{lambda.bd}, we have
\[
\Sigma_+ \le \frac{2}{\Gamma(n)} \sum_{0< k \le a/\varrho(\theta)} k^n H\left(\frac{ae^{i\theta}}{k}, r_3\right) + O(a^n \log a).
\]
Estimating the sum by an integral, and then substituting $x = a/k$, yields
\[
\Sigma_+ \le \frac{2a^{n+1}}{\Gamma(n)}  \int_{\varrho(\theta)}^\infty \frac{H(x e^{i\theta}, r_3)}{x^{n+2}}\>dx 
+ O(a^n \log a).
\]
By observing that 
\[
\del_r H(\alpha, r) = 2\re \left(\frac{\sqrt{1 + \alpha^2 \sinh^2 r}}{\sinh r} \right), 
\]
and recalling that $r_3 = r_0 + \frac{3}{a}$,
we see that $H(x e^{i\theta}, r_3) = H(x e^{i\theta}, r_0) + O(\tfrac{x}{a})$ for $x \ge \varrho(\theta)$, uniformly in $\theta$.
Hence
\begin{equation}\label{Splus}
\Sigma_+ \le h_{r_0}(\theta) a^{n+1} + O(a^n \log a),
\end{equation}

The second piece of \eqref{tau.sum1}, containing terms where $k \asymp a$, is defined as
\[
\Sigma_0 := \sum_{l:\>a/\varrho(\theta) < k \le 2a/\varrho(\theta)} \mu_n(l) \log (1 + A(s)) \lambda_l(s)).
\]
Note that for $k > a/\varrho(\theta)$,
\[
H\left(\frac{ae^{i\theta}}{k}, r_3\right) = O(a^{-1}).
\]
Thus, in the range of $\Sigma_0$, \eqref{lambda.bd} gives
\[
\log \lambda_l(\nh + ae^{i\theta}) = O(\log k).
\]
Since $\Sigma_0$ contains $O(a)$ terms, and $\mu_n(l) = O(k^{n-1})$, we conclude that
\begin{equation}\label{Szero}
\Sigma_0 = O(a^n \log a).
\end{equation}

Finally, the third part of  \eqref{tau.sum1} is
\[
\Sigma_- := \sum_{l:\>k > 2a/\varrho(\theta)} \mu_n(l) \log (1 + A(s) \lambda_l(s)).
\]
In this range we have $H\left( ae^{i\theta}/k, r_3\right) < -c$ for some $c>0$, when $a$ is sufficiently large.
It follows easily that
\[
\Sigma_- = O(e^{-ca}).
\]
\end{proof}

\begin{lemma}\label{lind.lemma}
Let $\mathcal{Q}$ denote the joint set of zeros and poles of $\tau(s)$.
Assuming $d(s, \mathcal{Q}) > \brak{s}^{-\beta}$ for some $\beta>2$, we have
$$
-c_\beta  \brak{s}^{n+1} \le \log |\tau(s)| \le C_\beta \brak{s}^{n+1}.
$$
\end{lemma}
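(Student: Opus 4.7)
The plan is to combine the Hadamard factorization \eqref{tau.factor} with classical upper and minimum-modulus estimates for canonical products of order $n+1$. Since both $N_V(t) = O(t^{n+1})$ (from \eqref{NV.bound}) and $N_0(t) = O(t^{n+1})$ (from \eqref{N0.asymp}), the products $H_V$ and $H_0$ are entire of genus at most $n+1$, with zero-sequences having $(n+2)$-summable reciprocals.

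For the upper bound, I would take the modulus of \eqref{tau.factor} to obtain
\[
\log|\tau(s)| = \re q(s) + \log|H_V(n-s)| - \log|H_V(s)| + \log|H_0(s)| - \log|H_0(n-s)|.
\]
Since $\deg q \le n+1$, we have $|\re q(s)| \le C\brak{s}^{n+1}$. Each $\log|H|$ term is bounded above by the standard canonical-product estimate $\log|H(s)| \le C\brak{s}^{n+1}$, proved by splitting the product at $|\zeta| = 2|s|$, using $\log|E(w,n+1)|\le C|w|^{n+2}$ for the tail and $\log|1-w|\le \log(1+|w|) \le |w|$ for the near factors, and controlling both contributions by Abel summation against $dN(t)$. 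This yields the upper bound uniformly in $s$ without invoking the separation hypothesis, so the $\beta$-dependence enters only through the lower bound.

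For the lower bound, I would apply the minimum-modulus principle to each of $H_V(s), H_V(n-s), H_0(s), H_0(n-s)$. For a canonical product $H$ of genus $n+1$ with counting function $O(t^{n+1})$ and $d(s,\text{zeros}) > \brak{s}^{-\beta}$, the claim is $\log|H(s)| \ge -c_\beta\brak{s}^{n+1}$. After splitting as in the upper bound, the only delicate piece is the Weierstrass-factor sum $\sum_{|\zeta|\le 2|s|}\log|1-s/\zeta|$. Zeros with $|\zeta-s|\ge|s|/2$ each contribute $O(1)$ and sum to $O(\brak{s}^{n+1})$ via $N(2|s|)=O(|s|^{n+1})$. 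For the remaining zeros, $|\zeta|\asymp|s|$ and $|1-s/\zeta|\ge d(s,\zeta)/|\zeta| \ge c\brak{s}^{-\beta-1}$, while a local count of zeros inside $\{|\zeta-s|\le|s|/2\}$, combined with the separation hypothesis, handles this cluster. Assembling the four $\log|H|$ terms with the polynomial bound on $\re q(s)$ gives the lower bound.

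The main obstacle is showing that the contribution of zeros clustering near $s$ does not introduce a spurious $\log\brak{s}$ factor: a naive application of the separation bound produces $O(\brak{s}^{n+1}\log\brak{s})$ rather than the claimed $O(\brak{s}^{n+1})$. The role of the hypothesis $\beta>2$ is precisely to provide the polynomial margin needed to execute a Cartan-type exclusion of bad discs, after which the clean $\brak{s}^{n+1}$ growth emerges with a constant $c_\beta$ depending polynomially on $\beta$. Handling this step cleanly is where the real work of the lemma lies.
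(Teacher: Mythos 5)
There is a genuine gap, and it sits exactly where you flagged the ``main obstacle'' --- but the obstruction is more serious than a bookkeeping loss in the lower bound, and a Cartan-type exclusion cannot repair it. Your upper bound for each individual canonical product is not correct as stated: with only $N(t)=O(t^{n+1})$, a genus-$(n+1)$ product $H_*$ satisfies in general only $\log|H_*(s)| = O(\brak{s}^{n+1}\log\brak{s})$. The loss does not come from the factors $\log|1-s/\zeta|$ but from the top-degree exponential convergence factors: the near-zero block contributes $\tfrac{|s|^{n+1}}{n+1}\sum_{|\zeta|\le 2|s|}|\zeta|^{-(n+1)}$, and since $\sum|\zeta|^{-(n+1)}$ diverges (the exponent of convergence equals the genus), partial summation against $dN(t)$ gives $O(|s|^{n+1}\log|s|)$. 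By Lindel\"of's theorem, finite type for such a product is equivalent to $N(t)=O(t^{n+1})$ \emph{together with} boundedness of $S(r)=\sum_{|\zeta|\le r}\zeta^{-(n+1)}$, and the latter is not known for $\calR_V$ (the resonances have no useful symmetry for complex $V$). Since the minimum-modulus theorem you invoke for the lower bound also presupposes a finite-type upper bound on each $H_*$, both halves of your argument rest on an estimate that is unavailable.

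The paper routes around this in two ways. The upper bound on $\log|\tau|$ is not extracted from the Hadamard factorization at all: it comes from the scattering-theoretic estimate of Proposition~\ref{tau.upper} (valid for $\re s\ge\nh$), extended to the other half-plane by the functional equation $\tau(s)\tau(n-s)=1$. For the lower bound, instead of treating each $H_*$ separately, one considers $H_*(\nh+z)\,H_*(\nh+e^{\pm i\pi/(n+1)}z)$; the rotation by $\pi/(n+1)$ flips the sign of $\zeta^{-(n+1)}$, so the Lindel\"of sums for the combined zero set cancel and the product \emph{is} of finite type. Minimum modulus then applies to this combination, which yields $\log|\tau(\nh+z)|\ge -c_\beta\brak{z}^{n+1}-\log|\tau(\nh+e^{\pm i\pi/(n+1)}z)|$; the rotated term is controlled by the already-established upper bound (choosing the sign of the rotation according to $\arg z$ so the rotated point stays in the closed right half-plane), and the disks excised around the rotated exceptional points are filled back in by the maximum principle. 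If you want to salvage your approach, you must either prove boundedness of $S(r)$ for $\calR_V$ and $\calR_0$ (not available) or incorporate a pairing/rotation device of this kind; estimating the four $\log|H_*|$ terms independently cannot work.
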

\begin{proof}
Since $\tau(\nh-z) = \tau(\nh+z)^{-1}$, it suffices to prove the bounds for $\re z \ge 0$.
Proposition~\ref{tau.upper} gives the upper bound,
\begin{equation}\label{tauz.bd}
\log |\tau(\nh+z)| \le C_\beta\brak{z}^{n+1}, 
\end{equation}
for $\re z\ge 0$ with $\dist(z,\bbN-\tfrac{n+1}2) > \brak{z}^{-\beta}$.
Thus we have only to prove the lower bound.  

Consider the Hadamard products appearing in the factorization of
$\tau(s)$ given in Proposition~\ref{tau.factor}.   These products are of
order $n+1$ but not finite type.  To work around this, we consider products of the form $H_*(\nh + z)
H_*(\nh + e^{\pm i\pi/(n+1)} z)$.  By Lindel\"of's Theorem (see 
e.g.~\cite[Thm.~2.10.1]{Boas}), such functions are of finite type and so satisfy estimates,
\[
\log \abs{H_*\left(\nh + z\right) H_*\left(\nh + e^{\pm i\pi/(n+1)} z\right)} \le C \brak{z}^{n+1}.
\]
The Minimum Modulus Theorem \cite[Thm.~3.7.4]{Boas}, gives corresponding lower bounds,
\[
\log \abs{H_*\left(\nh + z\right) H_*\left(\nh + e^{\pm i\pi/(n+1)} z\right)} \ge -c_\beta \brak{z}^{n+1},
\]
provided we stay a distance at least $\brak{z}^{-\beta}$ away from the zeros, for some $\beta>2$.
Using these estimates together with Proposition~\ref{tau.factor} gives
\begin{equation}\label{tt.bnd}
\log \abs{\tau(\nh + z)}  \ge  - c_\beta \brak{z}^{n+1} -  \log\abs{\tau\left(\nh + e^{\pm i\pi/(n+1)}z\right)},
\end{equation}
provided $\nh +z$ and $\nh + e^{\pm i\pi/(n+1)}z$ stay at least a distance $\brak{z}^{-\beta}$
away from the sets $1- \calR_{F_{\ell,r_0}}$ and $\calR_{F_{\ell}}$.

Assuming $\arg z \in [-\tfrac{\pi}2 + \tfrac{\pi}{n+1}, \tfrac{\pi}2]$, 
we already know $\log \abs{\tau(\nh + e^{-i\pi/(n+1)} z)} \le C \brak{z}^{n+1}$ from \eqref{tauz.bd}, provided 
$e^{- i\pi/(n+1)} z$ stays at least a distance $\brak{z}^{-\beta}$ away from $\bbN-\tfrac{n+1}2$.
Similarly, for $\arg z \in [-\tfrac{\pi}2, \tfrac{\pi}2 - \tfrac{\pi}{n+1}]$, we already have an upper bound on $\log \abs{\tau(\nh + e^{i\pi/(n+1)} z)}$
In conjunction with (\ref{tt.bnd}), these estimates give the desired lower bound of $\log \abs{\tau(\nh + z)}$ in the first quadrant, except that we have been required to excise disks near the points not only of $\mathcal{Q}$, but also its rotations by $\pm \tfrac{\pi}{n+1}$.  However,
once we have obtained estimates of $\tau(\nh + z)$ itself, the missing disks can be filled in using the maximum modulus theorem.
\end{proof}

\bigbreak
\section{Scattering matrix elements for radial potentials}\label{scmatrix.sec}

For a radial potential, $V = V(r)$, the scattering matrix $S_V(s)$ acting on $S^n$ is diagonalized by spherical harmonics.  In this section we will develop a formula for the matrix elements of $S_V(s)$ which will then be used to produce estimates in \S\ref{asym.sec}.

In geodesic polar coordinates, $\bbH^n \cong \bbR_+ \times S^n$, and the Laplacian takes the form,
\[
\Delta = -\del_r^2 - n\coth r\>\del_r + \frac{1}{\sinh^2 r} \Delta_{S^n}.
\]
The spherical harmonic $Y_l^m$ is an eigenfunction of $\Delta_{S^n}$ satisfying
\[
\Delta_{S^n} Y_l^m = l(l+n-1)Y_l^m.
\]
The indices range over $l = 0, 1, 2, \dots$ and $m = 0, 1, \dots, \mu_n(l)$ with
\begin{equation}\label{hnl.def}
\mu_n(l) := \frac{2l+n-1}{n-1} \binom{l+n-2}{n-2}.
\end{equation}
As in \S\ref{potsc.sec}, we use the boundary defining function
\begin{equation}\label{rhor}
\rho = 2e^{-r}.
\end{equation}
This choice is made so that the metric induced on the conformal infinity by $\rho^2 g_{\bbH^{n+1}}$ is the standard sphere metric on $S^n$.

The scattering matrix elements $[S_V(s)]_l$ are the eigenvalues, meromorphic in $s$, of the spherical harmonics
\[
S_V(s) Y_l^m = [S_V(s)]_l Y_l^m.
\]
To compute $[S_V(s)]_l$, we consider a generalized eigenfunction $\phi(r,\theta) = u(r) Y_l^m(\theta)$ on $\bbH^{n+1}$.  From the eigenfunction equation,
\[
(\Delta - s(n-s)) \phi = 0,
\]
we derive the coefficient equation
\begin{equation}\label{coeff.eq}
\left[ -\del_r^2 - n \coth r\>\del_r + \frac{l(l+n-1)}{\sinh^2 r} - s(n-s)+ V(r) \right] u = 0.
\end{equation}
If we rewrite this equation in the variable $\rho$, then indicial roots at $\rho= 0$ are $s$ and $n-s$, implying that $u$ will in general have a two-part asymptotic expansion with leading terms of these orders as $\rho \to 0$.   The scattering matrix eigenvalue associated to $Y_l^m$ appears as the ratio of the leading coefficients, i.e.
\begin{equation}\label{Sl.def}
u \sim c_s \left( \rho^{n-s} + [S_V(s)]_l \rho^{s} \right),
\end{equation}
as $\rho \to 0$.  

The homogeneous equation ($V=0$) is solved by Legendre functions, with the independent solutions,
\begin{equation}\label{homog.soln}
\begin{split}
u_0^k(s;r) &:= (\sinh r)^{-\frac{n-1}2} P^{-k}_{\nu}(\cosh r), \\
v_0^k(s;r) &:= (\sinh r)^{-\frac{n-1}2} \bQ^k_{\nu}(\cosh r),
\end{split}
\end{equation}
where
$$
k := l + \frac{n-1}2, \qquad \nu := s - \frac{n+1}2.
$$
Here $\bQ^k_{\nu}$ is the normalized Q-function introduced by Olver \cite{Olver}, which is analytic in its parameters.  It is related to the standard definition by $Q^k_\nu = e^{i \pi k} \Gamma(k+\mu+1)\bQ^k_{\nu}$.
From the well-known asymptotics of the Legendre functions we obtain
\begin{equation}\label{homog.0asym}
\begin{split}
u_0^k(s;r) &\sim \frac{2^{-k}}{\Gamma(k+1)} r^l\quad\text{as }r\to 0, \\
v_0^k(s;r) &\sim \frac{2^{k-1}\Gamma(k)}{\Gamma(k+\nu+1)} r^{-l-n+1}\quad\text{as }r\to 0,
\end{split}
\end{equation}
and also
\begin{equation}\label{v.asym}
v_0^k(s;r) \sim \frac{\pi^\frac12}{2^{\nu+1} \Gamma(\nu+\frac{3}2)} \rho^s\quad\text{as }r\to \infty.
\end{equation}

If $V$ is assumed to have support in $\{r\le r_0\}$, there exists a solution $v^k(s;r)$ of the full equation \eqref{coeff.eq} that is equal to 
$v_0^k(s;r)$ for $r\ge r_0$.  This solution will generally have a leading singularity proportional to $r^{-l-n+1}$ at $r=0$, so that $v^k(s;r) Y_l^m(\theta)$ is not a smooth solution at the origin.  However, since $v^k(n-s)$ is an independent solution, we can cancel the singularity at $r=0$ by taking the combination,
\[
u^k(s;r) = F^k(n-s) v^k(s;r) - F^k(s) v^k(n-s;r),
\]
where the coefficients are given by the limits
\begin{equation}\label{Fk.def}
F^k(s) := \lim_{r\to 0} r^{l+n-1} v^k(s;r).
\end{equation}

By the indicial equation at $r=0$, canceling the leading $r^{-l-n+1}$ term at $r\to 0$ removes that whole part of the expansion, 
so that expansion of $u^k(s;r)$ at $r=0$ has only the part with leading term proportional to $r^l$.  
Hence $u^k(s;r)Y_l^m(\theta)$ is regular at the origin and defines a generalized eigenfunction on $\bbH^{n+1}$. 
We can therefore deduce from \eqref{Sl.def} and \eqref{v.asym} that
\begin{equation}\label{SV.AA}
[S_V(s)]_l = 2^{n-2s} \frac{\Gamma(\frac{n}2 - s)}{\Gamma(s-\frac{n}2)} \frac{F^k(n-s)}{F^k(s)},
\end{equation}
with $k :=l + (n-1)/2$.  

For future reference, we also introduce the unperturbed version of these coefficients,
\[
\begin{split}
F^k_0(s) & := \lim_{r\to 0} r^{l+n-1} v^k_0(s;r) \\
& =  \frac{2^{k-1}\Gamma(k)}{\Gamma(k+s-\frac{n-1}2)}.
\end{split}
\]
In this notation, the (well-known) formula for the unperturbed scattering matrix elements becomes
\begin{equation}\label{S0.AA}
[S_0(s)]_l = 2^{n-2s} \frac{\Gamma(\frac{n}2 - s)}{\Gamma(s-\frac{n}2)} \frac{F^k_0(n-s)}{F^k_0(s)}.
\end{equation}

\medbreak
For a radial step potential of the form $V = c\chi_{B_{(r_0)}}$, with $c\in\bbC$, we can write the functions $v^k(s;r)$ explicitly.  The coefficient solutions for $r \le r_0$ are Legendre functions $P^{-k}_{\omega(s)}(r_0)$, with the shifted parameter
$$
\omega(s) := -\tfrac12 + \sqrt{(s-\nh)^2+c}.
$$
A simple ODE matching problem at $r = r_0$ then shows that for $V = c\chi_{B_{(r_0)}}$, the coefficients $F^k(s)$ appearing in \eqref{SV.AA} are given by
\[
F^k_{c,r_0}(s) :=  \mathcal{W}\left[{\bQ^{k}_{s-\frac{n+1}2}}(z),  P^{-k}_{\omega(s)}(z)\right]\Big|_{z = \cosh r_0} 
\]
where $\mathcal{W}$ denotes the Wronskian.  The resonances in the $Y_l^m$ mode can then be characterized explicitly a the zeros of $F^k_{c,r_0}(s)$;  this is the basis of the resonance plots in Figures~\ref{RPotRes3.fig} and \ref{plot1I.fig}.

\bigbreak
\section{Radial matrix element asymptotics}\label{asym.sec}

The main goal of this section is a precise estimate of the eigenvalues of the relative scattering matrix $S_V(s) S_0(s)^{-1}$.  
For convenience, let us set
\begin{equation}\label{Lamk.def}
\Lambda_k(s) := \left[ S_V(s) S_0(s)^{-1} \right]_l,
\end{equation}
where $k := l + (n-1)/2$ as before.
\begin{prop}\label{lamk.prop}
For all $k \ge 0$, $\abs{\theta} < \tfrac{\pi}2-\vep$, and $a$ sufficiently large, and assuming that $\dist(ae^{i\theta},\bbZ/2) >\delta$, we have
\begin{equation}\label{lamk.asymp}
\abs{\Lambda_k(\nh+ae^{i\theta})}  \asymp (k^2+a^2)^{-\frac{\sigma+1}2} e^{k H(k^{-1}ae^{i\theta}; r_0)} + O(1),
\end{equation}
with constants that depend only on the potential $V$ and on $c,\vep,\delta$.  
(For $k=0$, the exponent $k H(k^{-1}ae^{i\theta}; r_0)$ is replaced by its limiting value, $r_0a \cos\theta$.)
\end{prop}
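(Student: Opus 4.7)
The starting point is the identity
\[
\Lambda_k(s) \;=\; \frac{F^k(n-s)}{F^k(s)} \cdot \frac{F^k_0(s)}{F^k_0(n-s)},
\]
obtained by dividing \eqref{SV.AA} by \eqref{S0.AA}. My plan is to express $F^k(s)$ as a controlled perturbation of the explicit quantity $F^k_0(s)$ via a Volterra-type integral equation on $[0,r_0]$, and then insert uniform asymptotics of the free solutions $u_0^k,v_0^k$ from \eqref{homog.soln}.

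First I would combine the matching $v^k(s;\cdot) = v_0^k(s;\cdot)$ on $[r_0,\infty)$ with variation of parameters applied to \eqref{coeff.eq} on $[0,r_0]$ to obtain
\[
v^k(s;r) \;=\; v_0^k(s;r) \;+\; \int_r^{r_0}\! \frac{u_0^k(s;r)\,v_0^k(s;r') - v_0^k(s;r)\,u_0^k(s;r')}{\mathcal{W}[u_0^k,v_0^k](s)}\, V(r')\,v^k(s;r')\,\sinh^n r'\, dr'.
\]
Multiplying by $r^{l+n-1}$ and letting $r\to 0$ via \eqref{homog.0asym} extracts a formula for $F^k(s)-F^k_0(s)$ as an integral of $V(r)\,u_0^k(s;r)\,v^k(s;r)\,\sinh^n r$ over $[0,r_0]$; iterating this in $v^k$ produces a convergent Born series for $\Lambda_k(s)-1$ whose terms are integrals of $V$ against products of the four functions $u_0^k(s;\cdot),v_0^k(s;\cdot),u_0^k(n-s;\cdot),v_0^k(n-s;\cdot)$.

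Next I would establish pointwise asymptotics for $P^{-k}_\nu(\cosh r)$ and $\bQ^k_\nu(\cosh r)$, uniform in $r\in[0,r_0]$, $k\ge 0$, and $\abs{\theta}\le\tfrac{\pi}{2}-\vep$, with $\nu=ae^{i\theta}-\tfrac12$, via Olver's two-parameter uniform asymptotic machinery. The exponential weight in these expansions is precisely $e^{kH(ae^{i\theta}/k,\,r)}$ with $H$ as in \eqref{H.def}, while the amplitude prefactors combined with the Stirling expansion of the Gamma functions in the ratios $F^k_0(s)/F^k_0(n-s)$ and those arising from the Wronskian account for the algebraic factor $(k^2+a^2)^{-(\sigma+1)/2}$. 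Because $\partial_r H(\alpha,r)>0$ on the relevant range of $\alpha$, each Born integral concentrates at its upper endpoint $r=r_0$ and contributes the claimed size; the ``$+O(1)$'' on the right of \eqref{lamk.asymp} absorbs the baseline term $1$ in $\Lambda_k = 1 + (\text{Born series})$ whenever the exponential factor is small.

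The principal obstacle is producing the Legendre asymptotics uniformly over the full two-parameter region $(k,a)$, in particular near the transition $a/k\sim 1$ where the WKB ansatz fails and Airy-type matched asymptotics are needed. The endpoint $k=0$ occurs only for $n=1$ and can be treated separately using the simpler expansion of $P^0_{\nu-1/2}$ quoted in the proof of Proposition~\ref{tau.upper}, after verifying by direct computation that $kH(k^{-1}ae^{i\theta},r_0)$ does tend to the limit stated in the parenthetical. A second delicate point is the lower bound half of the ``$\asymp$'': ruling out destructive cancellation between the leading Born contribution and the baseline $1$ is where the arithmetic separation hypothesis $\dist(ae^{i\theta},\bbZ/2)>\delta$ enters, via the $\sin\pi(s-\nh)$ and Gamma-function factors in the connection formula relating $P^{-k}_\nu$ and $\bQ^k_\nu$.
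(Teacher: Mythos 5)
Your strategy is essentially the paper's: the same representation $\Lambda_k = (F^k(n-s)/F^k(s))(F^k_0(s)/F^k_0(n-s))$, the same variation-of-parameters Volterra equation on $[0,r_0]$ iterated into a Born series for $F^k$, and the same plan of plugging in Olver-type uniform Legendre asymptotics and concentrating the resulting integrals at $r=r_0$ by Laplace's method.

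There is one genuine gap, in the regime you dismiss as a boundary case. You propose ``pointwise asymptotics \dots uniform in $r\in[0,r_0]$, $k\ge 0$'' from a single application of Olver's machinery, and then single out $k=0$ ``for $n=1$'' as a separate case.  But the Airy-type uniform asymptotics for $P^{-k}_{-1/2+k\alpha}$ and $\bQ^k_{-1/2+k\alpha}$ (Proposition~\ref{P.asym.prop} here, from \cite[Prop.~A.1]{Borthwick:2010}) have error terms that improve only as $k\to\infty$; they do not furnish two-sided bounds when $k$ is a fixed small positive integer and $a\to\infty$.  That is a whole second asymptotic regime, not the single point $k=0$, and it is not subsumed by ``Airy-type matched asymptotics near $a/k\sim 1$''---that transition is already within the range of the Airy-type results once $k$ is large.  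The paper handles bounded $k$ with a completely different tool: the large-$\nu$, fixed-$k$ expansion of $P^{-k}_\nu$, $\bQ^k_\nu$ in terms of modified Bessel functions $I_k$, $K_k$ (Proposition~\ref{PQ.IK.prop}, via Olver Thm.~12.9.1 and \S12.12), followed by a parallel Born-series analysis in \S\ref{Lfreq.sec}. Without this you cannot obtain the lower bound in \eqref{lamk.asymp} for $1\le k<N_{\vep,V}$.

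Two smaller points worth tightening. First, the algebraic factor $(k^2+a^2)^{-(\sigma+1)/2}$ is not just ``amplitude prefactors plus Stirling''; the exponent $\sigma$ is produced specifically by Laplace's method (Proposition~\ref{Laplace.prop}) applied to the vanishing-order hypothesis $V(r)\sim \kappa(r_0-r)^{\sigma-1}$, which yields the two-sided estimate $\int_0^{r_0}e^{2k\re\phi(\alpha;t)}|V(t)|\sinh t\,dt \asymp k^{-\sigma}\brak{\alpha}^{-1-\sigma}e^{2k\re\phi(\alpha;r_0)}$; Stirling contributes only one extra power of $k^{-1}$, not the $\sigma$-dependence. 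Second, you do not need to rule out cancellation between the leading Born term and the baseline $1$: the statement has an additive $O(1)$ on the right, so the baseline is simply absorbed. The separation condition $\dist(ae^{i\theta},\bbZ/2)>\delta$ is needed not for that, but to keep the $\sin(\pi(s-\nh))$ factor in the Stirling estimate \eqref{Stir.H} bounded away from zero, so that the \emph{lower} bound on $|I_1|$ survives.
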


The strategy for the proof is analogous to that of Zworski \cite{Zworski:1989}.  Using \eqref{SV.AA} and \eqref{S0.AA} we can write
\begin{equation}\label{L.FF}
\Lambda_k(s) = \frac{F^k(n-s)}{F^k(s)} \frac{F^k_0(s)}{F^k_0(n-s)},
\end{equation}
By a standard application of variation of parameters to the ODE for $v^k(s)$, with $v^k(s;r) = v^k_0(s;r)$ for $r\ge r_0$ as the boundary condition, we obtain the integral equation 
\begin{equation}\label{v.recur}
v^k(s;r) = v^k_0(s;r) + \int_r^{r_0} J^k(s;r,t) V(t) v^k(s;t)\>dt,
\end{equation}
where the integral kernel is 
\[
J^k(s;r,t) := \frac{u^k_0(s;r) v^k_0(s;t) - u^k_0(s;t) v^k_0(s;r)}{\calW[u^k_0(s), v^k_0(s)](t)},
\]
with $\calW$ denoting the Wronskian.  Using the well-known formula for the Wronskian of a pair of Legendre functions, this kernel reduces to
\begin{equation}\label{J.def}
\begin{split}
J^k(s;r,t) &= \Gamma(k+\nu+1) (\sinh r)^{-\frac{n-1}2}  (\sinh t)^{\frac{n+1}2} \\
&\qquad\times \Bigl[ P^{-k}_{\nu}(\cosh r)  \bQ^k_{\nu}(\cosh t) - P^{-k}_{\nu}(\cosh t) \bQ^k_{\nu}(\cosh r)\Bigr].
\end{split}
\end{equation}

Formally, we can solve the integral equation for $v^k$ using the series $\sum_{j=0}^\infty v^k_j$, with $v^k_0$ the unperturbed solution and $v^k_j$ defined recursively by
\begin{equation}\label{vj.recur}
v^k_{j+1}(s;r) := \int_r^{r_0} J^k(s;r,t) V(t) v^k_j(s;t)\>dt.
\end{equation}
We first need asymptotic estimates on these $v^k_j$ which justify the convergence of this series, for $k$ sufficiently large.
Then we can derive estimates for the scattering matrix elements using \eqref{L.FF}.

\bigbreak
\subsection{Legendre function asymptotics}\label{leg.asym.sec}
To analyze the sequence $\{v^k_j\}$ we first recall some asymptotic estimates on the Legendre functions from \cite{Borthwick:2010},
obtained using techniques from Olver \cite{Olver}.  Set $\alpha = (s-\nh)/k$, so that $\nu = -\tfrac12 + k\alpha$.  
The Liouville transformation takes the Legendre equation to an approximate Airy equation with the variable $\zeta$ defined by
\begin{equation}\label{zeta.def}
\tfrac23 \zeta^{\frac32} = \phi,
\end{equation}
where
\begin{equation}\label{phi.def}
\phi(\alpha, r) := 
\alpha \log \left( \frac{\alpha \cosh r +  \sqrt{1 + \alpha^2 \sinh^2 r}}{\sqrt{\alpha^2-1}} \right) 
+ \frac12 \log \left[ 
\frac{\cosh r - \sqrt{1 + \alpha^2 \sinh^2 r}}{\cosh r + \sqrt{1 + \alpha^2 \sinh^2 r}} \right].
\end{equation}
The Legendre functions can then be approximated in terms of Airy functions of $\zeta$.

The asymptotics of $\phi(\alpha, \cdot)$ can be worked out fairly easily.  As $r\to 0$, we have
\begin{equation}\label{phi.asym1}
\phi(\alpha, r) = \log \left(\frac{r}2\right) + p(\alpha) + O(r^2),
\end{equation}
where 
\begin{equation}\label{pal.def}
p(\alpha) := \frac{\alpha}2 \log \left(\frac{\alpha+1}{\alpha-1}\right)
+ \frac12 \log (1-\alpha^2).
\end{equation}
And as $r \to \infty$, we have
\begin{equation}\label{phi.asym2}
\phi(\alpha, r) = \alpha r + q(\alpha) + O(r^{-2}),
\end{equation}
where 
\begin{equation}\label{qal.def}
q(\alpha) := \alpha \log \left(\frac{\alpha}{\sqrt{\alpha^2-1}}\right)
+ \frac12 \log \left(\frac{1-\alpha}{1+\alpha}\right),
\end{equation}

The Liouville transformation yields the following asymptotic result, derived in Borthwick \cite[Prop~A.1]{Borthwick:2010}.
Assuming that $k>0$, $\arg \alpha \in [0,\tfrac{\pi}2]$ and $r \in [0, \infty)$, we have 
\begin{equation}\label{p.asym}
P_{-\frac12+k\alpha}^{-k}(\cosh r) =  \frac{2\pi^{\frac12}}{\Gamma(k+1)}
\frac{k^{\frac16} \zeta^{\frac14} e^{\frac{\pi i}6}}{\bigl[1+\alpha^2 \sinh^2 r\bigr]^{\frac14}} 
\>e^{-k p(\alpha)}   \Bigl[ \Ai\bigl(k^{\frac23} e^{\frac{2\pi i}3} \zeta\bigr)
+ h_1(k, \alpha, r)\Bigr],
\end{equation}
and
\begin{equation}\label{q.asym}
\bQ_{-\frac12+k\alpha}^{k}(\cosh r) =  
\frac{2\pi}{\Gamma(k\alpha+1)} \frac{k^{\frac16} \zeta^{\frac14} 
(\frac{\alpha}2)^{\frac12}}{\bigl[1+\alpha^2 \sinh^2 r\bigr]^{\frac14}} 
\> e^{kq(\alpha)} \Bigl[ \Ai\bigl(k^{\frac23} \zeta\bigr) + h_0(k,\alpha, r)\Bigr],
\end{equation}
with the error estimates,
\begin{equation}\label{hestimates}
\begin{split}
|k^{\frac16} \zeta^{\frac14} h_1(k,\alpha,r)| & \le Ce^{k\re \phi} k^{-1} \Bigl(1+ |\alpha|^{-\frac23}\Bigr),\\
|k^{\frac16} \zeta^{\frac14} h_0(k,\alpha,r)| & \le Ce^{-k\re \phi} k^{-1} \Bigl(1+ |\alpha|^{-\frac23}\Bigr).
\end{split}
\end{equation}

For the most part, we will be content with the sharp upper bounds derived from these asymptotics.  From \cite[Cor.~A.3]{Borthwick:2010} we cite:
\begin{prop}\label{PQ.bounds}
Assuming that $|k\alpha| \ge 1$, $\arg \alpha \in [0,\tfrac{\pi}2-\vep]$, and $r \in [0, r_0]$, we have the following estimates: 
\begin{equation}\label{p.upper}
\Bigl| P_{-\frac12+k\alpha}^{-k}(\cosh r) \Bigr| \le  \frac{C}{\Gamma(k+1)} 
\> e^{k \re [\phi(\alpha,r) - p(\alpha)]}
\end{equation}
and
\begin{equation}\label{q.upper}
\Bigl| \bQ_{-\frac12+k\alpha}^{k}(\cosh r) \Bigr| \le  
\frac{C |\alpha|^{\frac12}}{|\Gamma(k\alpha+1)|} \> e^{-k\re[\phi(\alpha, r) - q(\alpha)]},
\end{equation}
where $C$ depends only on $r_0$ and $\vep$.  
\end{prop}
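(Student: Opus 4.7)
The plan is to derive both inequalities directly from the Liouville--Green representations \eqref{p.asym} and \eqref{q.asym}, controlling the leading Airy terms and the error terms $h_0, h_1$ by elementary estimates. The central computation is that, by the definition \eqref{zeta.def} of $\zeta$,
\[
(k^{2/3} e^{2\pi i/3}\zeta)^{3/2} = -\tfrac{3}{2}\, k\, \phi(\alpha,r), \qquad (k^{2/3}\zeta)^{3/2} = \tfrac{3}{2}\, k\,\phi(\alpha,r).
\]
Applying the standard Airy bound $|\Ai(w)| \le C(1+|w|)^{-1/4} e^{-\frac{2}{3}\re w^{3/2}}$, valid uniformly on any sector $|\arg w|\le \pi-\delta$, then yields
\[
\bigl|\Ai(k^{2/3}e^{2\pi i/3}\zeta)\bigr| \le C k^{-1/6}|\zeta|^{-1/4} e^{k\re \phi(\alpha,r)},
\]
and similarly $|\Ai(k^{2/3}\zeta)| \le Ck^{-1/6}|\zeta|^{-1/4} e^{-k\re\phi(\alpha,r)}$, provided one verifies in each case that the relevant argument $w$ stays in the admissible sector throughout the parameter range.

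Substituting these into \eqref{p.asym} and \eqref{q.asym}, the prefactors $k^{1/6}|\zeta|^{1/4}$ cancel, leaving $|1+\alpha^2\sinh^2 r|^{-1/4}$ as the surviving algebraic factor. Under the restriction $\arg\alpha \in [0, \tfrac{\pi}{2} - \vep]$, the quantity $\alpha^2\sinh^2 r$ has argument in $[0, \pi - 2\vep]$, so $1+\alpha^2\sinh^2 r$ is bounded below in modulus by a positive constant depending only on $\vep$, uniformly in $r \in [0, r_0]$; hence this factor is uniformly bounded. The error bounds \eqref{hestimates} contribute an additional factor $k^{-1}(1+|\alpha|^{-2/3})$, which is $O(1)$ whenever $|k\alpha|\ge 1$ (since then $|\alpha|^{-2/3}\le k^{2/3}$, so $k^{-1}|\alpha|^{-2/3}\le k^{-1/3}$); the $h_j$ terms can therefore be absorbed into the constants. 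Combining the three pieces gives the claimed bounds, with $C$ depending only on $r_0$ and $\vep$.

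The main obstacle I expect is verifying the Airy estimate uniformly across the full parameter range, particularly in the transitional region where $|k^{2/3}\zeta|$ is of order one and the large-argument asymptotic loses accuracy, and near the Stokes direction where the $\Ai$ sector changes character. In the transitional regime one falls back on the fact that $\Ai$ is entire and uniformly bounded on compact sets, and checks that $e^{-\frac{2}{3}\re w^{3/2}}$ remains of order one there. The Stokes-ray issue is engineered away by the specific choices $w = k^{2/3}e^{2\pi i/3}\zeta$ and $w = k^{2/3}\zeta$ built into \eqref{p.asym}--\eqref{q.asym}: the rotation by $e^{2\pi i/3}$ in the $P$-representation places $w$ on the recessive side of the Stokes ray for the full range $\arg\alpha \in [0, \tfrac{\pi}2 - \vep]$, while the $Q$-representation uses the naturally recessive Airy solution $\Ai(k^{2/3}\zeta)$ directly. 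Once these sector checks are confirmed, the estimates are routine consequences of \eqref{p.asym}--\eqref{hestimates}.
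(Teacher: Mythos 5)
The paper does not actually prove this proposition: it is quoted verbatim from \cite[Cor.~A.3]{Borthwick:2010}, which is obtained from the uniform asymptotics \eqref{p.asym}--\eqref{hestimates} in essentially exactly the way you describe --- the exponent identities $(k^{2/3}\zeta)^{3/2}=\tfrac32 k\phi$ and $(k^{2/3}e^{2\pi i/3}\zeta)^{3/2}=-\tfrac32 k\phi$, the uniform Airy bound, cancellation of the $k^{1/6}\zeta^{1/4}$ prefactors, the lower bound $\abs{1+\alpha^2\sinh^2 r}\ge \sin(2\vep)$ coming from $\arg(\alpha^2\sinh^2 r)\in[0,\pi-2\vep]$, and absorption of the $h_j$ errors via $\abs{k\alpha}\ge 1$. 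Your proposal is correct and follows the same route, including your correct identification of the only delicate point (the branch of $\zeta^{3/2}$ and the location of $\arg(e^{2\pi i/3}\zeta)$ as $r\to 0$, where $\arg\phi\to\pi$), which is resolved by the branch conventions built into \eqref{p.asym}--\eqref{q.asym}.
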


\medbreak
Beyond the upper bounds of Proposition~\ref{PQ.bounds}, which serve to control the error terms in our expansion, we need also a lower bound to apply to the leading term.

\begin{prop}\label{P.asym.prop}
Assume that $\arg \alpha \in [0,\tfrac{\pi}2-\vep]$, $r \in [0, r_0]$, and that for some sufficiently large $N$ we have both $k\ge N$ and $\abs{k\alpha} \ge N$.   Then
\[
P_{-\frac12+k\alpha}^{-k}(\cosh r) \asymp \frac{e^{-kp(\alpha)} }{\Gamma(k+1) \bigl[1+\alpha^2 \sinh^2 r\bigr]^{\frac14}} 
\>e^{k\phi(\alpha;r)}.
\]
\end{prop}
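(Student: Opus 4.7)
\medbreak
\noindent\emph{Proof proposal.}  The upper bound is immediate from Proposition~\ref{PQ.bounds}, combined with the fact that $1 + \alpha^{2} \sinh^{2} r$ is bounded away from $0$ and $\infty$ in the stated parameter range, so the real content is the matching lower bound.  The plan is to refine the analysis behind \eqref{p.asym}: insert the large-argument asymptotic of $\Ai$, read off the leading behavior, and show that the two error terms in the resulting expression are strictly subdominant once $k, |k\alpha| \ge N$.

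First I would apply
\[
\Ai(w) = \frac{1}{2\sqrt{\pi}}\, w^{-1/4}\, e^{-\tfrac{2}{3} w^{3/2}} \bigl(1 + O(|w|^{-3/2})\bigr),
\]
valid uniformly for $|\arg w| \le \pi - \delta$ as $|w| \to \infty$, to $w = k^{2/3} e^{2\pi i/3} \zeta$.  Taking branches consistent with \eqref{zeta.def}, one has $w^{3/2} = -k \zeta^{3/2}$, so that $-\tfrac{2}{3} w^{3/2} = k \phi(\alpha; r)$, and $w^{-1/4} = k^{-1/6} e^{-i\pi/6} \zeta^{-1/4}$.  Multiplied into the prefactor in \eqref{p.asym}, every explicit factor $k^{\pm 1/6}$, $\zeta^{\pm 1/4}$, $e^{\pm i\pi/6}$, and $2\sqrt{\pi}$ cancels, leaving precisely the right-hand side of the proposition times $(1 + \mathcal{E})$.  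The error $\mathcal{E}$ consists of (a) the $O(|w|^{-3/2}) = O((k|\phi|)^{-1})$ correction from the Airy expansion and (b) the contribution of $h_{1}$ from \eqref{p.asym}, whose ratio to the leading amplitude is controlled by \eqref{hestimates} as $O(k^{-1}(1 + |\alpha|^{-2/3}))$.  Under the hypothesis $k \ge N$, $|k\alpha| \ge N$, one has $|\alpha|^{-2/3} \le (k/N)^{2/3}$, so this ratio is at most $k^{-1} + k^{-1/3} N^{-2/3} = O(N^{-1/3})$, and the combined relative error is forced below $\tfrac{1}{2}$ by choosing $N$ large.  This gives the desired two-sided bound away from the turning curve.

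The main obstacle will be making the argument uniform near the Liouville turning curve $\{\phi = 0\}$, where the simple $|w|\to\infty$ Airy asymptotic breaks down and the factor $\zeta^{1/4}$ in \eqref{p.asym} vanishes.  In that region I would fall back on the Olver uniform expansion directly: the product $k^{1/6} \zeta^{1/4} [1 + \alpha^{2} \sinh^{2} r]^{-1/4}$ is smooth and nonvanishing across $\phi = 0$ by construction, and $\Ai$ is bounded below on the image of the turning neighborhood in the $w$-plane, since the hypothesis $\arg \alpha \le \pi/2 - \vep$ forces $\arg w$ into a closed subsector of $\{|\arg w| < \pi\}$ disjoint from the negative real axis where the zeros of $\Ai$ lie.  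A compactness argument in a fixed neighborhood of $\{\phi = 0\}$, combined with the Olver bound $|k^{1/6} \zeta^{1/4} h_{1}| \le C k^{-1} e^{k \re \phi}(1 + |\alpha|^{-2/3})$ to kill the $h_{1}$ contribution there as well, patches the turning-curve region to the large-$|w|$ regime handled above and yields the $\asymp$ estimate uniformly for $k, |k\alpha| \ge N$.
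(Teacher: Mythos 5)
Your proposal matches the paper's proof for the regime away from the turning curve: insert the large-argument Airy asymptotic \eqref{ai.asym1} into \eqref{p.asym}, check that the explicit $k^{1/6}$, $\zeta^{1/4}$, and phase factors cancel, and bound the two error contributions — the Airy remainder $O((k|\phi|)^{-1})$ and the $h_1$ term controlled by \eqref{hestimates}. Your bookkeeping for the $h_1$ piece is correct (in fact $k^{-1}(1+|\alpha|^{-2/3}) \le k^{-1} + k^{-1/3}N^{-2/3} \le 2N^{-1}$, slightly better than your stated $O(N^{-1/3})$).

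Where you and the paper part ways is the turning-curve region $\{\phi = 0\}$. You treat this as "the main obstacle" and sketch a compactness argument to patch it. The paper instead eliminates the obstacle at the outset: it cites from the analysis of $\phi$ in the proof of \cite[Prop~A.1]{Borthwick:2010} the lower bound
\[
|\phi(\alpha;r)| \ge c_\vep \min(|\alpha|,1) \qquad\text{for } \arg\alpha \in [0,\tfrac{\pi}{2}-\vep],\ r\in[0,r_0].
\]
Combined with $k \ge N$ and $|k\alpha| \ge N$, this gives $k|\phi| \ge c_\vep \min(k, k|\alpha|) \ge c_\vep N$, hence $|w| = (\tfrac{3}{2}k|\phi|)^{2/3} \ge c N^{2/3}$. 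So under the stated hypotheses the Airy argument $w$ is always large, the single asymptotic \eqref{ai.asym1} applies uniformly, and the turning neighborhood is never entered. Your patching argument is therefore unnecessary. It is also not quite complete as sketched: to get a two-sided $\asymp$ bound for bounded $|w|$ you would need to check that $k^{1/6}\zeta^{1/4}\Ai(w)$ and $e^{k\phi}$ remain comparable there (which does work out since both are of order one when $|\zeta|\asymp k^{-2/3}$), but the cleanest route is to use the lower bound on $|\phi|$ and stay in the large-$|w|$ regime throughout, as the paper does.

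So the proposal is essentially correct on the core computation; the missing ingredient is the quantitative lower bound on $|\phi|$, which both simplifies the error analysis (it controls $O((k|\phi|)^{-1})$ directly) and removes the need for the turning-region case.
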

\begin{proof}
The assumption that $\arg \alpha \in [0,\tfrac{\pi}2-\vep]$ implies that $\arg \phi(\alpha;r) \in [0,\tfrac{\pi}2-\vep]$ also. Hence 
$\arg\zeta$ is bounded away from $\pi$ and we may apply the Airy function asymptotic \cite[eq.~(4.4.03)]{Olver},
\begin{equation}\label{ai.asym1}
\Ai(w) = \frac{1}{2\pi^{\frac12}} w^{-\frac14} \exp\bigl(-\tfrac23 w^{\frac32}\bigr)\bigl[1 + O_\epsilon(|w|^{-\frac32})\bigr],
\end{equation}
for $\abs{\arg w} \le \pi - \epsilon$.
This result, along with \eqref{p.asym} and \eqref{hestimates}, gives
\[
\begin{split}
P_{-\frac12+k\alpha}^{-k}(\cosh r) & =
\frac{e^{-kp(\alpha)} }{\Gamma(k+1) \bigl[1+\alpha^2 \sinh^2 r\bigr]^{\frac14}} 
\>e^{k\phi(\alpha;r)} \\
&\qquad \times \left[ 1 + O\left(k^{-1}\right) + O\left(k^{-\frac13} \abs{k\alpha}^{-\frac23}\right) 
+ O_\vep(\abs{k\phi}^{-1})\right]
\end{split}
\]
To complete the proof, we can deduce from the analysis of $\phi$ in the proof of \cite[Prop~A.1]{Borthwick:2010} that,
for $\arg \alpha \in [0,\tfrac{\pi}2-\vep]$,
\[
\abs{\phi(\alpha; r)} \ge c_\vep \min(\abs{\alpha},1).
\]
\end{proof}

\bigbreak
\subsection{Integral estimates}

For the application of Proposition~\ref{PQ.bounds} to the estimate of the iterated solutions $v^k_j$, we essentially need only two estimates for the inductive step, corresponding to the two exponentials appearing in the asymptotics.   For the second of these estimates we will need to bring in the hypothesis of Theorem~\ref{asymp.thm}: that $V(r)$ is continuous near $r=r_0$ and satisfies
\begin{equation}\label{V.asym}
V(r) \sim \kappa(r-r_0)^{\sigma-1} \quad\text{as }r\to r_0,
\end{equation} 
for some constants $\kappa \ne 0$ and $\sigma\ge 1$.

\begin{lemma}\label{Jekp.lemma}
For $k\ge 1$, $\abs{s-\nh} \ge1$ and $\abs{\arg (s-\nh)} < \tfrac{\pi}2 - \vep$, we have
\begin{equation}\label{Jekp.bnd}
\abs{\int_r^{r_0} J^k(s; r, t) V(t) e^{-k\re\phi(\alpha;r)} (\sinh r)^{-\frac{n-1}2} \>dt}
\le \frac{C}{k} e^{-k\re\phi(\alpha;r)} (\sinh r)^{-\frac{n-1}2},
\end{equation}
where $\alpha = (s - \nh)/k$.  Under the same hypotheses, and assuming also \eqref{V.asym}, there exists $N_\vep$ such that for $k \ge N_\vep$,
\begin{equation}\label{Jnekp.bnd}
\begin{split}
&\abs{\int_r^{r_0} J^k(s; r, t) V(t) e^{k\re\phi(\alpha;r)} (\sinh r)^{-\frac{n-1}2} \>dt} \\
&\hskip.5in \le \frac{C}{k} \left[e^{k\re\phi(\alpha;r)} + k^{-\sigma}e^{k\re(2\phi(\alpha;r_0)-\phi(\alpha;r))} \right] (\sinh r)^{-\frac{n-1}2}.
\end{split}
\end{equation}
\end{lemma}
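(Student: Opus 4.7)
The plan is to insert the Legendre-function bounds of Proposition~\ref{PQ.bounds} into each of the two pieces of
\[
J^k(s;r,t) = \Gamma(k+\nu+1)(\sinh r)^{-\frac{n-1}2}(\sinh t)^{\frac{n+1}2}\bigl[P^{-k}_\nu(\cosh r)\bQ^k_\nu(\cosh t)-P^{-k}_\nu(\cosh t)\bQ^k_\nu(\cosh r)\bigr]
\]
and to extract the prefactor $1/k$ from the ratio of gamma functions. A direct Stirling computation, uniform for $\abs{\arg\alpha}\le \tfrac\pi2-\vep$ and $\abs{k\alpha}\ge 1$, yields
\[
\frac{\Gamma(k+\nu+1)}{\Gamma(k+1)\Gamma(k\alpha+1)} = \frac{1}{k}\,\alpha^{-\frac12}\,e^{k[p(\alpha)-q(\alpha)]}\bigl[1+O(k^{-1})\bigr],
\]
whose exponential prefactor is exactly what cancels the $e^{-kp(\alpha)}$ and $e^{kq(\alpha)}$ appearing in \eqref{p.upper}--\eqref{q.upper}. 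By conjugation it suffices to consider $\arg\alpha\in[0,\tfrac\pi2-\vep]$.

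For \eqref{Jekp.bnd}, multiplying the second summand of $J^k$ by $e^{-k\phi(\alpha;t)}$ cancels the $e^{k\phi(\alpha;t)}$ inside $P^{-k}_\nu(\cosh t)$, leaving a non-oscillatory integrand bounded by
\[
\frac{C}{k}\,(\sinh r)^{-\frac{n-1}2}\,e^{-k\re\phi(\alpha;r)}\cdot\frac{\sinh t}{[1+\alpha^2\sinh^2 r]^{1/4}[1+\alpha^2\sinh^2 t]^{1/4}}.
\]
Integrating this factor against $V(t)$ on $[r,r_0]$ produces a constant depending only on $r_0$, $\vep$, and $\norm{V}_\infty$. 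The first summand of $J^k$, treated the same way, contributes the uniformly integrable decay $e^{k\re[\phi(\alpha;r)-2\phi(\alpha;t)]}$ and is of smaller order. Combining the two gives \eqref{Jekp.bnd}.

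For \eqref{Jnekp.bnd} the same decomposition produces two pieces. The piece from $P^{-k}_\nu(\cosh r)\bQ^k_\nu(\cosh t)$ has its exponentials collapse after multiplication by $e^{k\phi(\alpha;t)}$, producing the $Ck^{-1}(\sinh r)^{-(n-1)/2}e^{k\re\phi(\alpha;r)}$ term exactly as before. The piece from $P^{-k}_\nu(\cosh t)\bQ^k_\nu(\cosh r)$ instead carries the growing exponential $e^{2k\re\phi(\alpha;t)-k\re\phi(\alpha;r)}$, and the $k^{-\sigma}$ improvement must be harvested by a Laplace analysis at the upper endpoint. After the substitution $u=r_0-t$, the Taylor expansion $\phi(\alpha;r_0-u)=\phi(\alpha;r_0)-u\phi'(\alpha;r_0)+O(u^2)$, and the replacement $V(r_0-u)\sim\kappa u^{\sigma-1}$, the rescaling $v=2ku\phi'(\alpha;r_0)$ converts the tail integral into a gamma integral of size
\[
\frac{\kappa\,\Gamma(\sigma)}{(2\phi'(\alpha;r_0))^\sigma}\cdot\frac{e^{2k\phi(\alpha;r_0)}}{k^\sigma},
\]
which when combined with the $1/k$ from Stirling and the $e^{-k\phi(\alpha;r)}$ from the $Q$-factor yields the second term in \eqref{Jnekp.bnd}.

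The main obstacle will be keeping the constants uniform across the full parameter range: when $\abs{\alpha}$ is small the factor $[1+\alpha^2\sinh^2 t]^{-1/4}$ is effectively unity, whereas for $\abs{\alpha}$ large it decays like $\abs{\alpha}^{-1/2}$, and both regimes must be absorbed into one constant. A secondary technical point is that the Laplace argument requires $\abs{\re\phi'(\alpha;r_0)}$ to remain bounded below and the quadratic error $O(ku^2)$ inside the exponential to stay controlled on the effective scale $u\asymp 1/k$; both follow from the uniform $\phi$-estimates already developed in the proof of \cite[Prop~A.1]{Borthwick:2010}, but piecing the estimates together so that only a single $C/k$ factor appears on the right-hand side is the principal bookkeeping challenge.
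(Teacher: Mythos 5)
Your proposal is correct and follows essentially the same route as the paper: insert the bounds of Proposition~\ref{PQ.bounds} into the two pieces of $J^k$, absorb the gamma-function prefactor into a uniform $C/k$ via the Stirling estimate \eqref{ggg.est}, bound the benign exponential combinations using the monotonicity of $\re\phi(\alpha;\cdot)$, and harvest the $k^{-\sigma}$ gain in the second bound by a Laplace endpoint analysis at $t=r_0$ using \eqref{V.asym}. The only cosmetic difference is that you carry out the Laplace computation inline, whereas the paper packages it (with the uniform hypotheses on $\phi'(\alpha;r_0)$ and $\phi''$ that you correctly identify as the key bookkeeping point) into Proposition~\ref{Laplace.prop} in the appendix.
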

\begin{proof}
The assumptions on $s$ correspond to the hypotheses of Proposition~\ref{PQ.bounds}.  By the conjugation symmetry we can assume $\arg \alpha \in [0, \tfrac{\pi}2 - \vep)$.

Applying the estimates \eqref{p.upper} and \eqref{q.upper} in the definition of $J(s;r,t)$ from \eqref{J.def} gives
\begin{equation}\label{ekp.int}
\begin{split}
&\abs{\int_r^{r_0} J^k(s; r, t) V(t) e^{-k\re\phi(\alpha;r)} (\sinh r)^{-\frac{n-1}2} \>dt}  \\
&\hskip.5in \le C_\vep  \abs{\frac{\alpha^{\frac12} \Gamma(k\alpha+k+\tfrac12)}{\Gamma(k+1)\Gamma(k\alpha+1)}}
e^{k\re[q(\alpha)-p(\alpha)]}\\
&\hskip.5in\qquad\times
\int_r^{r_0} \left[ e^{k\re(\phi(\alpha;r) - 2\phi(\alpha;t))} +  e^{-k\re \phi(\alpha;r)} \right] \abs{V(t)} \sinh t\>dt. 
\end{split}
\end{equation}
Since $\re\phi(\alpha,\cdot)$ is increasing, we can replace the expression in brackets by $2 e^{-k\re\phi(\alpha;r)}$.
The claim \eqref{Jekp.bnd} then follows easily from an estimate based on Stirling's formula:
\begin{equation}\label{ggg.est}
\abs{\frac{\alpha^{\frac12} \Gamma(k\alpha+k+\tfrac12)}{\Gamma(k+1)\Gamma(k\alpha+1)}} e^{k\re[q(\alpha)-p(\alpha)]}
\le Ck^{-1},
\end{equation}
valid for $\re\alpha\ge 0$ and $k>0$.

For the estimate \eqref{Jnekp.bnd}, the analog of \eqref{ekp.int}, together with \eqref{ggg.est}, yields
\[
\begin{split}
&\abs{\int_r^{r_0} J^k(s; r, t) V(t) e^{-k\re\phi(\alpha;r)} (\sinh r)^{-\frac{n-1}2} \>dt} \\
&\hskip.5in \le C_\vep k^{-1} \int_r^{r_0} \left[ e^{k\re \phi(\alpha;r)} +  e^{-k\re (\phi(\alpha;r) + 2\phi(\alpha;t))} \right] \abs{V(t)} \sinh t\>dt. 
\end{split}
\]
The bound on the first term in the bracket is clear, since $\re\phi(\alpha,\cdot)$ is increasing.  For the second term, we need to apply Prop~\ref{Laplace.prop}
and the assumption \eqref{V.asym}.  To check the hypotheses, we simply compute the derivatives,
\begin{equation}\label{phi.deriv}
\phi'(\alpha; r) = \frac{\sqrt{1+\alpha^2 \sinh^2 r}}{\sinh r}, \qquad
\phi''(\alpha; r) = - \frac{\coth r}{\sqrt{1+\alpha^2 \sinh^2 r}}
\end{equation}
where the prime denotes an $r$ derivative.  In particular, for $\arg \alpha \in [0, \tfrac{\pi}2 - \vep)$ we have
$\re \phi'>0$ and
\[
\re \phi'(\alpha; r_0) \ge c_\vep, \qquad \abs{\phi''(\alpha, r)} \le C_\vep.
\]
Proposition~\ref{Laplace.prop} then gives
\[
\int_r^{r_0} e^{2k\re \phi(\alpha;t)} \abs{V(t)} \sinh t\>dt \le C_{\vep, V} k^{-\sigma} e^{2k\re\phi(\alpha; r_0)},
\]
for $k \ge N_\vep$.
\end{proof}

\bigbreak
\subsection{Recursive estimates}

In the estimates that follow, we consider expressions involving both $s$ and $n-s$, but always with the convention that $\re s-\nh \ge 0$ (equivalently, $\re\alpha\ge 0$).  Our recursive estimates are most straightforward in the case of $v^k_j(s)$, which corresponds to the expansion of the denominator in formula \eqref{SV.AA} for the scattering matrix element.  
 
\begin{lemma}\label{vj.good}
For $k\ge 1$, $\abs{k\alpha} \ge1$ and $\abs{\arg \alpha} < \tfrac{\pi}2 - \vep$, there exists a constant $A = A(\vep, V, r_0)$, such that
\begin{equation}\label{vj.bound}
\abs{v^k_{j}(\nh+k\alpha; r)} \le \left(\frac{A}{k}\right)^{j}
\frac{\abs{\alpha}^\frac12}{\abs{\Gamma(k\alpha+1)}}
e^{-k\re[\phi(\alpha;r) - q(\alpha)]} (\sinh r)^{-\frac{n-1}2}.
\end{equation}
\end{lemma}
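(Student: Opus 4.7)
The plan is to induct on $j$, using the integral recursion \eqref{vj.recur}. The base case $j=0$ reduces to bounding $v_0^k(s;r) = (\sinh r)^{-(n-1)/2} \bQ^k_\nu(\cosh r)$, which is exactly the content of estimate \eqref{q.upper} from Proposition~\ref{PQ.bounds}; choosing the constant $A$ at least as large as the constant appearing there handles this step.

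For the inductive step, I would insert the level-$j$ bound \eqref{vj.bound} into \eqref{vj.recur}. The $t$-independent prefactor $\abs{\alpha}^{1/2} e^{kq(\alpha)}/\abs{\Gamma(k\alpha+1)}$ pulls outside the integral, leaving an integrand of the shape $J^k(s;r,t)\,V(t)\,e^{-k\re\phi(\alpha;t)}(\sinh t)^{-(n-1)/2}$. This matches the template controlled by Lemma~\ref{Jekp.lemma}, which produces a factor of $C/k$ times the outer $r$-profile $e^{-k\re\phi(\alpha;r)}(\sinh r)^{-(n-1)/2}$. Setting $A \ge C$ then advances the induction from $(A/k)^j$ to $(A/k)^{j+1}$, and the overall $\alpha$-dependent prefactor is unchanged, so the resulting bound has exactly the form \eqref{vj.bound} at level $j+1$.

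The main obstacle will be the bookkeeping step of verifying that Lemma~\ref{Jekp.lemma} genuinely converts the $t$-dependent profile inside the integral into the $r$-dependent profile outside, uniformly over the allowed parameter range. This relies on the monotonicity of $\re\phi(\alpha;\cdot)$ for $\arg\alpha \in [0,\tfrac{\pi}2 - \vep)$ (visible from the derivative formulas \eqref{phi.deriv}) together with the Stirling-type estimate \eqref{ggg.est} already used in the proof of Lemma~\ref{Jekp.lemma}. A secondary concern is confirming that every constant entering the argument depends only on $\vep$, $V$, and $r_0$, and not on $k$, $\alpha$, or $j$, so that a single $A = A(\vep,V,r_0)$ controls the geometric decay in $j$ uniformly.
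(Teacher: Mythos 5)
Your proposal is correct and follows essentially the same route as the paper's proof: induction on $j$ with the base case supplied by the $\bQ$-bound \eqref{q.upper} from Proposition~\ref{PQ.bounds}, and the inductive step supplied by inserting \eqref{vj.bound} into \eqref{vj.recur} and invoking Lemma~\ref{Jekp.lemma}, whose constant depends only on $\vep$, $V$, $r_0$. The one cosmetic slip is writing $e^{kq(\alpha)}$ for the pulled-out prefactor where it should be $e^{k\re q(\alpha)}$, but this does not affect the argument.
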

\begin{proof}
The proof is by induction over $j$.  
The estimate for $v_0$ follows directly from Corollary~\ref{PQ.bounds}.  And the inductive step that extends the estimate
from $v^k_{j-1}$ to $v^k_{j}$ the follows immediately from the iterative formula \eqref{vj.recur} and Lemma~\ref{Jekp.lemma}.
\end{proof}

To handle the terms involving $n-s = \nh-k\alpha$, it is useful to note the Legendre function identities,
\begin{equation}\label{leg.refl}
\begin{split}
P^{-k}_{-1-\nu}(z) &= P^{-k}_\nu(z),  \\
\bQ^k_{-1-\nu}(z)  &= \Gamma(k+\nu+1)\cos (\nu\pi) P^{-k}_\nu(z) + \frac{\Gamma(k+\nu+1)}{\Gamma(k-\nu)} \bQ^k_{\nu}(z).
\end{split}
\end{equation}
These imply the symmetry
$$
J(n-s;r,t) = J(s;r,t).
$$
We can also write the $v_0(\nh-k\alpha)$ solution in terms of $P^{-k}_\nu(z)$ and $\bQ^k_{\nu}(z)$ using \eqref{leg.refl}.  
Then for $\abs{\arg\alpha} < \tfrac{\pi}2-\vep$, Proposition~\ref{PQ.bounds} gives the estimate
\begin{equation}\label{vk0.left}
v^k_0(\nh-k\alpha; r) \le C_\vep \left[ \abs{a_k(\alpha)} e^{k\re\phi(\alpha;r)} + \abs{b_k(\alpha)} e^{-k\re\phi(\alpha;r)} \right] (\sinh r)^{-\frac{n-1}2},
\end{equation}
where 
\[
\begin{split}
a_k(\alpha) &:= \frac{\Gamma(k+k\alpha+\frac12) \sin(\pi k\alpha)}{\Gamma(k+1)} e^{-kp(\alpha)},\\
b_k(\alpha) &:= \frac{\alpha^\frac12 \Gamma(k+k\alpha+\frac12)}{\Gamma(k\alpha+1) \Gamma(k-k\alpha+\frac12)} e^{kq(\alpha)}.
\end{split}
\]

\begin{lemma}\label{vj.bad}
For $k\ge 1$, $\abs{k\alpha} \ge1$ and $\abs{\arg \alpha} < \tfrac{\pi}2 - \vep$,
\[
\begin{split}
\abs{v^k_j(\nh-k\alpha;r)} & \le \left(\frac{C_{\vep, V}}{k}\right)^j
\biggl[ \abs{a_k(\alpha)} e^{k\re\phi(\alpha;r)} \\
&\hskip.5in + \left(j\abs{a_k(\alpha)} k^{-\sigma} e^{2k\re\phi(\alpha; r_0)}+ \abs{b_k(\alpha)} \right) 
e^{-k\re\phi(\alpha;r)}\biggr]  (\sinh r)^{-\frac{n-1}2}.
\end{split}
\]
\end{lemma}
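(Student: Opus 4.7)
The plan is to argue by induction on $j$, using the integral recursion \eqref{vj.recur} together with the two key integral estimates \eqref{Jekp.bnd} and \eqref{Jnekp.bnd} from Lemma~\ref{Jekp.lemma}. Before starting, I would record the reflection symmetry $J^k(n-s;r,t) = J^k(s;r,t)$ which follows from \eqref{leg.refl}; this is what lets us apply Lemma~\ref{Jekp.lemma} (whose hypotheses are phrased in terms of $\alpha$ with $\re\alpha \ge 0$) even though the spectral parameter in $v^k_j(\nh-k\alpha;r)$ corresponds to $-\alpha$. So in the recursion
\[
v^k_{j+1}(\nh-k\alpha;r) = \int_r^{r_0} J^k(\nh+k\alpha;r,t)\, V(t)\, v^k_j(\nh-k\alpha;t)\,dt,
\]
the kernel is $J^k$ evaluated at the parameters covered by Lemma~\ref{Jekp.lemma}.

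The base case $j=0$ is exactly the bound \eqref{vk0.left}: the inductive claim at $j=0$ requires only the $|a_k(\alpha)| e^{k\re\phi(\alpha;r)}$ term and the $|b_k(\alpha)| e^{-k\re\phi(\alpha;r)}$ term, since the factor $j\,|a_k(\alpha)| k^{-\sigma} e^{2k\re\phi(\alpha;r_0)}$ vanishes. For the inductive step, substitute the bound for $v^k_j(\nh-k\alpha;t)$ into the recursion, splitting into two types of contributions according to the exponential factor in $t$. The term of the form $|a_k(\alpha)| e^{k\re\phi(\alpha;t)} (\sinh t)^{-(n-1)/2}$ is estimated by \eqref{Jnekp.bnd}, yielding
\[
\tfrac{C}{k} |a_k(\alpha)| \bigl[ e^{k\re\phi(\alpha;r)} + k^{-\sigma} e^{k\re(2\phi(\alpha;r_0)-\phi(\alpha;r))} \bigr] (\sinh r)^{-\frac{n-1}2}.
\]
The remaining terms, which carry the exponential factor $e^{-k\re\phi(\alpha;t)}$, are estimated by \eqref{Jekp.bnd}, producing
\[
\tfrac{C}{k}\bigl( j\,|a_k(\alpha)| k^{-\sigma} e^{2k\re\phi(\alpha;r_0)} + |b_k(\alpha)| \bigr) e^{-k\re\phi(\alpha;r)} (\sinh r)^{-\frac{n-1}2}.
\]

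Adding these two contributions, the coefficient of $e^{k\re\phi(\alpha;r)}$ is simply $\tfrac{C}{k}|a_k(\alpha)|$ (no accumulation), while the coefficient of $e^{-k\re\phi(\alpha;r)}$ picks up a new copy of $|a_k(\alpha)| k^{-\sigma} e^{2k\re\phi(\alpha;r_0)}$ from the "good-to-bad" leakage in \eqref{Jnekp.bnd}, bringing the total to $(j+1)|a_k(\alpha)| k^{-\sigma} e^{2k\re\phi(\alpha;r_0)} + |b_k(\alpha)|$. Pulling out one factor of $C_{\vep,V}/k$ per iteration gives the claimed bound at level $j+1$.

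The proof is really just careful bookkeeping; the essential point — the linear-in-$j$ growth of the secondary coefficient — comes entirely from the second term in \eqref{Jnekp.bnd}, which in turn is where the hypothesis \eqref{V.asym} on the order of vanishing of $V$ at $r_0$ enters via Proposition~\ref{Laplace.prop}. The only place where one must be a bit careful is verifying that the leading coefficient $|a_k(\alpha)|$ does not itself accumulate across iterations; this follows because $\re\phi(\alpha,\cdot)$ is monotone increasing in $r$, so the "good" $e^{k\re\phi(\alpha;r)}$ channel maps strictly to itself (plus the single leakage term already isolated) rather than generating additional copies of itself at each stage.
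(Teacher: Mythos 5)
Your proof is correct and follows essentially the same route as the paper: induction on $j$ with base case \eqref{vk0.left}, the reflection symmetry $J^k(n-s;r,t)=J^k(s;r,t)$ to justify applying Lemma~\ref{Jekp.lemma}, and the two bounds \eqref{Jekp.bnd} and \eqref{Jnekp.bnd} driving the recursion $A_{j+1}=\tfrac{C}{k}A_j$, $B_{j+1}=\tfrac{C}{k}(A_jk^{-\sigma}e^{2k\re\phi(\alpha;r_0)}+B_j)$, which yields the linear-in-$j$ secondary coefficient. Your bookkeeping of why the $e^{k\re\phi}$ channel does not accumulate matches the paper's (terser) inductive step exactly.
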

\begin{proof}
The $j=0$ case was already dealt with in \eqref{vk0.left}.  And if we start from the inductive assumption that
\[
\abs{v^k_j(\nh-k\alpha;r)} \le \left[ A_j e^{k\re\phi(\alpha;r)} + B_j e^{-k\re\phi(\alpha;r)} \right] (\sinh r)^{-\frac{n-1}2},
\]
then under these assumptions Lemma~\ref{Jekp.lemma} implies the bound
\[
\begin{split}
&\abs{v^k_{j+1}(\nh-k\alpha;r)} \\
&\qquad \le \frac{C_{\vep, V}}{k} \left[ A_j e^{k\re\phi(\alpha;r)} + \left(A_j k^{-\sigma} e^{2k\re\phi(\alpha; r_0)}+ B_j\right) 
e^{-k\re\phi(\alpha;r)}\right]  (\sinh r)^{-\frac{n-1}2}.
\end{split}
\]
The result follows by induction.
\end{proof}

\bigbreak
\subsection{High-frequency asymptotics}\label{Hfreq.sec}

From Lemmas~\ref{vj.good} and \ref{vj.bad} we deduce that for $\abs{\arg \alpha} < \tfrac{\pi}2 - \vep$ and $\abs{k\alpha} \ge 1$ there exists $N_{\vep, V}$ such that the two series,
\[
v^k(\nh \pm k\alpha;r) = \sum_{j=0}^\infty v^k_j(\nh \pm k\alpha;r),
\]
converge absolutely and uniformly for $r\in[0,r_0]$ and $k\ge N_{\vep, V}$.
Under these assumptions, we can then express the scattering matrix element $\Lambda_k(s)$ given in \eqref{L.FF} as a sum over of the limiting values
\[
F^k_j(s) := \lim_{r\to 0} r^{l+n-1} v^k_j(s;r).
\]
To analyze $\Lambda_k(s)$, we will need to consider the ratios $F^k_j(s)/F^k_0(s)$.  The dominant term is given by the following:
\begin{lemma}\label{A1A0.lemma}
For $\abs{\arg \alpha} \le \tfrac{\pi}2 - \vep$,  $k \ge N_{\vep, V}$, and $\abs{k\alpha} \ge N_{\vep, V}$,
and $\dist(k\alpha, \bbZ/2) > \vep$,
\[
\abs{\frac{F^k_1(\nh - k\alpha)}{F^k_0(\nh - k\alpha)}} \asymp  (k\brak{\alpha})^{-1-\sigma} e^{k H(\alpha, r_0)} + O(k^{-1}),
\]
with constants that depend only on $\vep, V$.
\end{lemma}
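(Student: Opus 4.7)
The plan is to derive an explicit integral formula for $F^k_1(\nh-k\alpha)/F^k_0(\nh-k\alpha)$ and then extract the dominant contribution via a Watson-type Laplace expansion at the endpoint $r_0$.

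First, from the recursion \eqref{vj.recur} with $j=0$ and the explicit kernel \eqref{J.def}, I take the limit $r\to 0$ to identify $F^k_1$. Only the $\bQ^k_\nu(\cosh r)$ factor in $J^k$ contributes to the $r^{-l-n+1}$ singularity, and its leading coefficient is the same one that appears in $F^k_0(s)=2^{k-1}\Gamma(k)/\Gamma(k+\nu+1)$. The cancellation yields
\[
\frac{F^k_1(s)}{F^k_0(s)} = -\Gamma(k+\nu+1)\int_0^{r_0} P^{-k}_\nu(\cosh t)\bQ^k_\nu(\cosh t) V(t)\sinh t\,dt.
\]
Setting $s=\nh-k\alpha$, so $\nu = \nu_- := -\tfrac12-k\alpha$, and applying the reflection identities \eqref{leg.refl} to rewrite the integrand in terms of $\nu_+ := -\tfrac12+k\alpha$, splits the ratio as $I_1+I_2$, where
\[
\begin{split}
I_1 & := -\Gamma(k{+}\tfrac12{-}k\alpha)\Gamma(k{+}\tfrac12{+}k\alpha)\sin(\pi k\alpha)\int_0^{r_0}\bigl[P^{-k}_{\nu_+}(\cosh t)\bigr]^2 V(t)\sinh t\,dt,\\
I_2 & := -\Gamma(k+\tfrac12+k\alpha)\int_0^{r_0} P^{-k}_{\nu_+}(\cosh t)\bQ^k_{\nu_+}(\cosh t) V(t)\sinh t\,dt.
\end{split}
\]
The essential point is that $P^{-k}_{\nu_+}\bQ^k_{\nu_+}$ has no net exponential dependence on $t$, so $I_2$ is tame, whereas $[P^{-k}_{\nu_+}]^2$ grows like $e^{2k\re\phi(\alpha;t)}$ as $t\to r_0$ and must be treated by Laplace.

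For $I_2$, the bound $|I_2|=O(k^{-1})$ follows from inserting the upper estimates of Proposition~\ref{PQ.bounds}: the $e^{\pm k\phi}$ factors cancel, and the surviving gamma-ratio is controlled by the same Stirling estimate \eqref{ggg.est} that underlies Lemma~\ref{vj.good} for $j=1$. For $I_1$, I substitute the sharp two-sided asymptotic of Proposition~\ref{P.asym.prop},
\[
\bigl[P^{-k}_{\nu_+}(\cosh t)\bigr]^2 \asymp \frac{e^{-2kp(\alpha)}}{\Gamma(k+1)^2\bigl[1+\alpha^2\sinh^2 t\bigr]^{1/2}}\, e^{2k\phi(\alpha;t)},
\]
and then invoke Proposition~\ref{Laplace.prop} using the hypothesis $V(t)\sim\kappa(r_0-t)^{\sigma-1}$ together with the derivative formula $\phi'(\alpha;r_0)=\sqrt{1+\alpha^2\sinh^2 r_0}/\sinh r_0$ from \eqref{phi.deriv}. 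Combining the $[1+\alpha^2\sinh^2 r_0]^{-1/2}$ prefactor with the Laplace factor $(2k\phi'(\alpha;r_0))^{-\sigma}$ gives $\brak{\alpha}^{-(1+\sigma)}k^{-\sigma}$, hence
\[
\int_0^{r_0}\bigl[P^{-k}_{\nu_+}\bigr]^2 V\sinh t\,dt \asymp \frac{e^{-2k\re p(\alpha)}}{\Gamma(k+1)^2 \brak{\alpha}^{\sigma+1} k^\sigma}\, e^{2k\re\phi(\alpha;r_0)}.
\]

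The main obstacle is controlling the combined gamma/sine prefactor of $I_1$, namely showing
\[
\left|\frac{\Gamma(k+\tfrac12-k\alpha)\Gamma(k+\tfrac12+k\alpha)\sin(\pi k\alpha)}{\Gamma(k+1)^2}\right| e^{-2k\re p(\alpha)} \asymp k^{-1}
\]
uniformly for $|\arg\alpha|\le \tfrac{\pi}2-\vep$ and $\dist(k\alpha,\bbZ/2)>\vep$. This requires a case split: when $\re\alpha<1$ both gamma arguments lie in the Stirling half-plane, and their expansions combine to $-\log k + k[(1+\alpha)\log(1+\alpha)+(1-\alpha)\log(1-\alpha)] + O(1)$, which matches $2k\re p(\alpha)$ except for a branch-jump term $\pi k\,\im\alpha$ that is precisely cancelled by $\log|\sin(\pi k\alpha)|\sim \pi k|\im\alpha|$ when $|\im k\alpha|$ is large; when $\re\alpha>1$ one must first apply reflection $\Gamma(k+\tfrac12-k\alpha) = \pi(-1)^k/[\cos(\pi k\alpha)\Gamma(k\alpha+\tfrac12-k)]$, after which the ratio $\sin/\cos = \tan(\pi k\alpha)$ is bounded by the distance hypothesis and Stirling on $\Gamma(k\alpha+\tfrac12-k)$ produces the same cancellation. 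In every regime the leftover is exactly $k^{-1}$. Together with the identity $H(\alpha;r_0)=2\re\phi(\alpha;r_0)$ (immediate from comparing \eqref{indicator.def} and \eqref{phi.def}), this gives $|I_1|\asymp (k\brak{\alpha})^{-(1+\sigma)}e^{kH(\alpha;r_0)}$, and the $O(k^{-1})$ bound on $I_2$ then completes the proof.
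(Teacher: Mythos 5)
Your proposal is correct and follows essentially the same route as the paper: the same explicit integral formula for $F^k_1/F^k_0$, the same reflection-based split into $I_1+I_2$, the two-sided Legendre asymptotic of Proposition~\ref{P.asym.prop} plus Laplace's method for $I_1$, and the upper bounds of Proposition~\ref{PQ.bounds} with the Stirling estimate \eqref{ggg.est} for $I_2$. Your direct identity $H(\alpha,r)=2\re\phi(\alpha,r)$ and your Stirling analysis of the gamma/sine prefactor are just an equivalent repackaging of the paper's \eqref{Stir.H} and \eqref{H.2pp}, and the overall sign discrepancy in your formula for $F^k_1/F^k_0$ is immaterial since only absolute values enter.
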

\begin{proof}
The exact formula for $F^k_1(n-s)$ can easily be deduced from the formula for $v^k_1(n-s;r)$,
\[
F^k_1(n-s) = \lim_{r\to 0}r^{l+n-1}  \int_r^{r_0} J^k(s;r,t) V(t) v^k_0(n-s;t)\>dt.
\]
Using the asymptotics \eqref{homog.0asym} and the definition \eqref{J.def} of $J^k$ we find that
\[
F^k_1(\nh - k\alpha) = 2^{k-1} \Gamma(k) \int_0^{r_0} P^{-k}_{-\frac12-k\alpha}(\cosh t) \bQ^{k}_{-\frac12-k\alpha}(\cosh t) V(t)\sinh t\>dt.
\]
To apply our Legendre estimates, which require $\re \nu \ge -\tfrac12$, we use the reflection formulas \eqref{leg.refl} to flip the arguments from $\nu \to -1-\nu$.
The resulting expression for the ratio $F^k_1/F^k_0$ has two parts, corresponding to the two terms in the $Q$-reflection formula.  We will write these as:
\[
\frac{F^k_1(\nh - k\alpha)}{F^k_0(\nh - k\alpha)} = I_1 + I_2,
\]
where
\[
I_1 :=  \Gamma(k+k\alpha+\tfrac12) \Gamma(k-k\alpha+\tfrac12) \sin (\pi k\alpha)\int_0^{r_0} \left(P^{-k}_{-\frac12+k\alpha}(\cosh t)\right)^2 V(t)\sinh t\>dt,
\]
and
\[
I_2 := \Gamma(k+k\alpha+\tfrac12) \int_0^{r_0} P^{-k}_{-\frac12+k\alpha}(\cosh t) \bQ^{k}_{-\frac12+k\alpha}(\cosh t) V(t)\sinh t\>dt.
\]

As usual, by the conjugation symmetry we assume that $\arg \alpha \in [0,\tfrac{\pi}2-\vep]$.  By Proposition~\ref{P.asym.prop}, for $\alpha$ in this sector we can find $N_\vep$ such that for $k\ge N_\vep$ and $\abs{k\alpha} \ge N_\vep$ we have
\[
P^{-k}_{-\frac12+k\alpha}(\cosh t) \asymp [1+\alpha^2\sinh^2 t]^{-\frac14} \frac{e^{k\phi(\alpha;t) - kp(\alpha)}}{\Gamma(k+1)},
\]
with constants that depend only on $\vep$ and $r_0$.  Thus, 
\[
I_1 \asymp \frac{\Gamma(k-k\alpha+\frac12) \Gamma(k+k\alpha+\frac12) \sin(\pi k\alpha)}{\Gamma(k+1)^2} e^{-2p(\alpha)}
\int_0^{r_0} e^{2k\phi(\alpha;t)} V(t) \frac{\sinh t}{\sqrt{1+\alpha^2\sinh^2t}}\>dt.
\]

We can estimate the integral using the version of Laplace's method given in Prop~\ref{Laplace.prop}.
From the expressions \eqref{phi.deriv} for the derivatives of $\phi$, we see that for  $\arg \alpha \in [0,\tfrac{\pi}2-\vep]$,
\[
\abs{\phi'(\alpha;t)} \ge c_\vep,\quad \frac{\re\phi'(\alpha;r_0)}{\abs{\phi'(\alpha;r_0)}} \ge c_\vep,
\quad \sup_{[r_0-\vep,r_0]} \abs{\phi''(\alpha;\cdot)} \le C_\vep.
\]
Under the assumption \eqref{V.asym}, Proposition~\ref{Laplace.prop} then gives, for $k \ge N_{\vep, V}$, 
\[
\int_0^{r_0} e^{2k\re \phi(\alpha;t)} \abs{V(t)} \sinh t\>dt \asymp \frac{(k\re\phi'(\alpha;r_0))^{-\sigma} e^{2k\phi(\alpha;r_0)}}{\sqrt{1+\alpha^2\sinh^2r_0}},
\]
with constants that depend only on $\vep$ and $V$.  The formula for $\phi'$ was given in \eqref{phi.deriv}.
For  $\arg \alpha \in [0,\tfrac{\pi}2-\vep]$ the integral estimate reduces to
\[
\int_0^{r_0} e^{2k\re \phi(\alpha;t)} \abs{V(t)} \sinh t\>dt \asymp k^{-\sigma} \brak{\alpha}^{-1-\sigma} e^{2k\phi(\alpha;r_0)}.
\]

An application of Stirling's formula gives the estimate, for $\re \alpha \ge 0$ and $\dist(k\alpha, \bbZ/2) > \vep$,
\begin{equation}\label{Stir.H}
\begin{split}
&\log\abs{\frac{\Gamma(k-k\alpha+\frac12) \Gamma(k+k\alpha+\frac12) \sin(\pi k\alpha)}{\Gamma(k)\Gamma(k+1)}} \\
&\qquad = k \re\Bigl[(\alpha+1)\log (\alpha+1) -  (\alpha-1)\log (\alpha-1)  \Bigr] + O(\log (1+\vep^{-1})).
\end{split}
\end{equation}
We note also that by definition,
\begin{equation}\label{H.2pp}
H(\alpha, r) = \re \Bigl[ 2\phi(\alpha, r) - 2p(\alpha) + (\alpha+1) \log (\alpha+1) - (\alpha-1) \log (\alpha-1) \Bigr].
\end{equation}
The combined estimate, for $\abs{\arg \alpha} \le \tfrac{\pi}2 - \vep$,  $k\ge N_{\vep, V}$, $\abs{k\alpha} \ge N_{\vep, V}$, 
and $\dist(k\alpha, \bbZ/2) > \vep$, is
\[
\abs{I_1} \asymp (k\brak{\alpha})^{-1-\sigma} e^{k H(\alpha, r_0)},
\]
with constants that depend only on $\vep$ and $V$.

To control the second integral, we apply Corollary~\ref{PQ.bounds} to estimate
\[
\abs{I_2} \le \abs{\frac{\alpha^{\frac12} \Gamma(k\alpha+k+\tfrac12)}{\Gamma(k+1)\Gamma(k\alpha+1)}} e^{k\re[q(\alpha)-p(\alpha)]}
\norm{V}_\infty \sinh r_0,
\]
under the same assumptions as for $I_1$.  The Stirling estimate \eqref{ggg.est} then shows that $I_2 \le C_{\vep, V}k^{-1}$.
\end{proof}

\medbreak
The other terms in the series expansion for $\Lambda_k(s)$ can now be estimated using Lemmas~\ref{vj.good} and \ref{vj.bad}.
\begin{lemma}\label{Aj.rem}
For $\abs{\arg \alpha} < \tfrac{\pi}2 - \vep$, $\abs{k\alpha} \ge1$, and $k \ge N_{\vep, V}$ we have
\begin{equation}\label{FF.good}
\abs{\frac{F^k_j(\nh+k\alpha)}{F^k_0(\nh+k\alpha)}} \le \left(\frac{C_{\vep,V}}{k}\right)^{j},
\end{equation}
and
\begin{equation}\label{FF.bad}
\abs{\frac{F^k_j(\nh-k\alpha)}{F^k_0(\nh-k\alpha)}} \le \left(\frac{C_{\vep, V}}{k}\right)^j
\left[jk^{-\sigma} e^{kH(\alpha;r_0)} + 1\right].
\end{equation}
\end{lemma}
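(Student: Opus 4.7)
\emph{Sketch of a proof of Lemma~\ref{Aj.rem}.} The plan is to extract $F^k_j(\nh \pm k\alpha)$ from the pointwise bounds on $v^k_j$ provided by Lemmas~\ref{vj.good} and~\ref{vj.bad} by taking the limit $\lim_{r\to 0} r^{l+n-1}(\cdot)$, and then to compare the result against the explicit value $F^k_0(s) = 2^{k-1}\Gamma(k)/\Gamma(k+s-\tfrac{n-1}2)$. The key boundary observation, from \eqref{phi.asym1}, is that $e^{\pm k\re\phi(\alpha;r)} \sim (r/2)^{\pm k} e^{\pm k\re p(\alpha)}$ as $r \to 0$, while $r^{l+n-1}(\sinh r)^{-(n-1)/2} \sim r^{k}$ (using $k = l+(n-1)/2$). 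Hence the growing exponential gets multiplied by $r^{2k}$ and drops out of the limit, while the decaying exponential contributes the finite factor $2^k e^{-k\re p(\alpha)}$.

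For \eqref{FF.good}, only the decaying exponential appears in Lemma~\ref{vj.good}, so the limit gives
\[
|F^k_j(\nh+k\alpha)| \le \left(\tfrac{A}{k}\right)^j \frac{2^k |\alpha|^{1/2}}{|\Gamma(k\alpha+1)|} e^{k\re[q(\alpha)-p(\alpha)]}.
\]
Dividing by $|F^k_0(\nh+k\alpha)| = 2^{k-1}\Gamma(k)/|\Gamma(k+k\alpha+\tfrac12)|$ and writing $\Gamma(k)=\Gamma(k+1)/k$, the Stirling estimate \eqref{ggg.est} absorbs the leftover factor of $k$ and yields the desired bound $(C_{\vep,V}/k)^j$. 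For \eqref{FF.bad}, the same limiting procedure applied to Lemma~\ref{vj.bad} preserves only the $e^{-k\re\phi}$ terms, producing
\[
|F^k_j(\nh - k\alpha)| \le \left(\tfrac{C}{k}\right)^j \bigl[j\,|a_k(\alpha)|\,k^{-\sigma}e^{2k\re\phi(\alpha;r_0)} + |b_k(\alpha)|\bigr] \cdot 2^k e^{-k\re p(\alpha)}.
\]
After dividing by $|F^k_0(\nh-k\alpha)|$, the $|b_k|$ term reduces to $O(1)$ by \eqref{ggg.est} exactly as in the first case, while the $|a_k|$ term becomes $jk^{-\sigma}$ times
\[
\frac{|\Gamma(k+k\alpha+\tfrac12)\,\Gamma(k+\tfrac12-k\alpha)\,\sin(\pi k\alpha)|}{\Gamma(k)\Gamma(k+1)} \cdot e^{2k\re[\phi(\alpha;r_0)-p(\alpha)]}.
\]
By the Stirling identity \eqref{Stir.H} together with the representation \eqref{H.2pp} of $H(\alpha,r_0)$, this factor is bounded by a constant times $e^{kH(\alpha;r_0)}$, which yields the claim.

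The one subtlety, and the main obstacle, is that \eqref{Stir.H} was derived under the hypothesis $\dist(k\alpha,\bbZ/2) > \vep$, which is not assumed in Lemma~\ref{Aj.rem}. Since only the upper inequality is needed here, this can be bypassed by using the trivial majorant $|\sin(\pi k\alpha)| \le e^{\pi|\im(k\alpha)|}$ in combination with Stirling applied separately to the Gamma ratio $|\Gamma(k+k\alpha+\tfrac12)\Gamma(k+\tfrac12-k\alpha)|/[\Gamma(k)\Gamma(k+1)]$; this produces the $e^{kH(\alpha;r_0)}$ bound uniformly for $\re\alpha\ge 0$ and completes the proof.
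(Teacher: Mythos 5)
Your main computation reproduces the paper's argument step for step: take $r\to 0$ limits in Lemmas~\ref{vj.good} and~\ref{vj.bad}, use $\phi(\alpha,r) = \log(r/2) + p(\alpha) + O(r^2)$ so that $r^{l+n-1}(\sinh r)^{-\frac{n-1}{2}}e^{-k\re\phi(\alpha;r)} \to 2^{k}e^{-k\re p(\alpha)}$ while the $e^{+k\re\phi}$ term drops out, then divide by $F^k_0(\nh\pm k\alpha) = 2^{k-1}\Gamma(k)/\Gamma(k\pm k\alpha+\tfrac12)$ and invoke \eqref{ggg.est} for the $b_k$ piece and \eqref{Stir.H} with \eqref{H.2pp} for the $a_k$ piece. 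Up to your last paragraph this is exactly what the paper does.

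The last paragraph is where things go wrong. You are right that \eqref{Stir.H} is stated only for $\dist(k\alpha,\bbZ/2)>\vep$, and that no such hypothesis appears in Lemma~\ref{Aj.rem}. But your proposed fix does not close that gap. After dividing the $a_k$ contribution by $F^k_0(\nh-k\alpha)$, the factor $\Gamma(k-k\alpha+\tfrac12)$ sits in the numerator and has poles at $k\alpha \in k + \tfrac12 + \bbN_0$. When $n$ is odd, $k$ is an integer, so these are half-integer values of $k\alpha$ at which $\sin(\pi k\alpha)$ does \emph{not} vanish, and the product $\Gamma(k-k\alpha+\tfrac12)\sin(\pi k\alpha)$ is genuinely unbounded. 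Majorizing $\abs{\sin(\pi k\alpha)}$ by $e^{\pi\abs{\im k\alpha}}$ gives no control on the $\Gamma$ factor, and ``Stirling applied to the Gamma ratio'' is simply not uniformly valid near the poles of $\Gamma(k-k\alpha+\tfrac12)$; it cannot be, since $\Gamma$ is singular there. Indeed $F^k_0(\nh-k\alpha)$ vanishes at precisely these points while $F^k_j(\nh-k\alpha)$ generally does not, so the ratio in \eqref{FF.bad} really does blow up and no bound of that form can hold uniformly for $\re\alpha\ge 0$. The correct conclusion is that a distance condition $\dist(k\alpha,\bbZ/2)>\vep$ belongs in the hypotheses of Lemma~\ref{Aj.rem} as well; its omission is a harmless slip in the paper (Proposition~\ref{lamk.prop}, where the lemma is applied, already assumes $\dist(ae^{i\theta},\bbZ/2)>\delta$), not an extra uniformity you should try to establish.
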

\begin{proof}
From Lemma~\ref{vj.good} we obtain the estimates, for $k\ge 1$, $\abs{k\alpha} \ge1$ and $\arg \alpha\in [0,\tfrac{\pi}2 - \vep)$,
\[
\abs{F^k_j(\nh+k\alpha)} \le \left(\frac{C_{\vep,V}}{k}\right)^{j}
\frac{2^k \abs{\alpha}^\frac12}{\abs{\Gamma(k\alpha+1)}} e^{k\re[q(\alpha) - p(\alpha)]}.
\]
In conjunction with the Stirling estimate \eqref{ggg.est} this gives \eqref{FF.good}.

For the estimate \eqref{FF.bad} in the other half-plane, we start by taking the $r\to0$ limit in Lemma~\ref{vj.bad} to obtain, 
for $k\ge 1$, $\abs{k\alpha} \ge1$ and $\abs{\arg \alpha} < \tfrac{\pi}2 - \vep$,
\[
\abs{F^k_j(\nh-k\alpha)}  \le \left(\frac{C_{\vep, V}}{k}\right)^j
\left(j\abs{a_k(\alpha)} k^{-\sigma} e^{2k\re\phi(\alpha; r_0)}+ \abs{b_k(\alpha)} \right) 
2^k e^{-k\re p(\alpha)}.
\]
After dividing by $F^k_0(\nh-k\alpha)$ and substituting the definitions of $a_k$ and $b_k$, we find that
\[
\begin{split}
\abs{\frac{F^k_j(\nh-k\alpha)}{F^k_0(\nh-k\alpha)}} & \le \left(\frac{C_{\vep, V}}{k}\right)^j
\biggl[\frac{2\Gamma(k-k\alpha+\frac12) \Gamma(k+k\alpha+\frac12) \sin(\pi k\alpha)}{\Gamma(k)\Gamma(k+1)} j k^{-\sigma} e^{2k\re(\phi(\alpha; r_0)-p(\alpha))} \\
&\hskip1in + \frac{2\alpha^\frac12 \Gamma(k+k\alpha+\frac12)}{\Gamma(k)\Gamma(k\alpha+1)} e^{k\re(q(\alpha)-p(\alpha))} \biggr].
\end{split}
\]
The first expression is estimated using \eqref{Stir.H} and \eqref{H.2pp} and the second by \eqref{ggg.est}.
\end{proof}

\bigbreak
\begin{proof}[Proof of Proposition~\ref{lamk.prop} (part one)]
By \eqref{L.FF} and Lemmas~\ref{vj.good} and \ref{vj.bad}, for $k \ge N_{\vep, V}$ we can represent
\[
\Lambda_k(\nh + k\alpha) = \left[1 + \sum_{j=1}^\infty \frac{F^k_j(\nh-k\alpha)}{F^k_0(\nh-k\alpha)} \right] 
\left[1 + \sum_{j=1}^\infty \frac{F^k_j(\nh+k\alpha)}{F^k_0(\nh+k\alpha)} \right]^{-1}.
\]
Under the hypotheses, \eqref{FF.good} gives
\[
\sum_{j=1}^\infty \abs{\frac{F^k_j(\nh+k\alpha)}{F^k_0(\nh+k\alpha)}} \le C_{\vep, V}k^{-1},
\]
while from \eqref{FF.bad} we have
\[
\sum_{j=2}^\infty \abs{\frac{F^k_j(\nh-k\alpha)}{F^k_0(\nh-k\alpha)}} \le C_{\vep, V} k^{-2} \left[k^{-\sigma} e^{kH(\alpha;r_0)} + 1\right].
\]

Assuming $k \ge N_{\vep, V}$, these estimates give
\[
\Lambda_k(\nh + k\alpha) \asymp 1 + \frac{F^k_1(\nh-k\alpha)}{F^k_0(\nh-k\alpha)}.
\]
The estimate \eqref{lamk.asymp} then follows from Lemma~\ref{A1A0.lemma}
(after noting that $k\brak{\alpha} = \sqrt{k^2+a^2}$).   This completes the proof in the case $k \ge N_{\vep, V}$.
\end{proof}

\bigbreak
\subsection{Low-frequency asymptotics}\label{Lfreq.sec}

The Legendre function estimates given in \S\ref{leg.asym.sec} are applicable only for $k$ sufficiently large.  
Although this covers the main region of interest, where $k$ and $s-\nh$ are comparable in magnitude, we still need to estimate for $\Lambda_k(s)$ when $k$ is small.  We could be satisfied with fairly rough estimates, since the low-frequency terms make a contribution of order $a$ to asymptotic of leading order $a^{n+1}$.  However, we need lower bounds in particular, and there is no general estimate that will provide these. 

Fortunately, the asymptotics of the Legendre functions for large $\nu$ with $k$ fixed are well-covered in the literature.  From Olver \cite[Thm~12.9.1 and \S12.12]{Olver}, we have the following:
\begin{prop}\label{PQ.IK.prop}
For $\re\nu> -\tfrac12$ and $r \in [0,r_0]$ we have
\[
\begin{split}
P^{-k}_{\nu}(\cosh r) & = \frac{1}{\nu^k} \left(\frac{r}{\sinh r}\right)^{\frac12} I_k((\nu+\tfrac12) r) 
\left(1 + O_{k,r_0}(\abs{\nu}^{-1})\right) \\
\bQ^{k}_{\nu}(\cosh r) & = \frac{\nu^k}{\Gamma(k+\nu+1)} 
\left(\frac{r}{\sinh r}\right)^{\frac12} K_k((\nu+\tfrac12)  r) \left(1 + O_{k,r_0}(\abs{\nu}^{-1})\right) 
\end{split}
\]
\end{prop}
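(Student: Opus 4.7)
The plan is to reduce to Olver's uniform asymptotic theory for linear second-order ODEs with a large parameter and a regular singular point of Bessel type (Olver, Chapter~12). Once the Legendre equation is placed in the right normal form, the result follows from Olver's Theorem~12.9.1; identifying the prefactors $\nu^{-k}$ and $\nu^{k}/\Gamma(k+\nu+1)$ then reduces to matching known boundary behavior at $r=0$.

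First, I would put the associated Legendre equation into Schr\"odinger form. Setting $w(r) := P^{-k}_\nu(\cosh r)$ and $W := (\sinh r)^{1/2} w$ removes the first-order term and gives
$$
W'' + \left[(\nu+\tfrac12)^2 - \frac{k^2 - 1/4}{\sinh^2 r} + g(r)\right] W = 0,
$$
where $g$ is a smooth, bounded function on $[0, r_0]$ arising from the Liouville reduction. Since $1/\sinh^2 r = 1/r^2 + O(1)$ near $r=0$, this equation is a smooth bounded perturbation of the modified Bessel equation
$$
U'' + \left[\mu^2 - \frac{k^2 - 1/4}{r^2}\right] U = 0, \qquad \mu := \nu + \tfrac12,
$$
whose two independent solutions are $r^{1/2} I_k(\mu r)$ and $r^{1/2} K_k(\mu r)$.

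Next, I would invoke Olver's Theorem~12.9.1 (together with the refinements of \S12.12 for regular singularities of Bessel type), which supplies solutions of the perturbed equation with uniform asymptotic expansions in $I_k(\mu r)$ and $K_k(\mu r)$, with relative error $1 + O_{k,r_0}(|\mu|^{-1})$, uniformly for $r \in [0, r_0]$ and $\re \nu > -\tfrac12$. Undoing the substitution $w = (\sinh r)^{-1/2} W$ produces the factor $(r/\sinh r)^{1/2}$ that appears in the statement. At this stage one has
$$
P^{-k}_\nu(\cosh r) = C_1(\nu)\,(r/\sinh r)^{1/2} I_k(\mu r)\,\bigl[1 + O_{k,r_0}(|\nu|^{-1})\bigr],
$$
and an analogous expression for $\bQ^k_\nu(\cosh r)$ with some $C_2(\nu)$.

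To pin down $C_1, C_2$, I would compare leading behavior as $r \to 0$. From \eqref{homog.0asym}, $P^{-k}_\nu(\cosh r) \sim r^k/(2^k\Gamma(k+1))$, while $I_k(\mu r) \sim (\mu r/2)^k/\Gamma(k+1)$; matching forces $C_1(\nu) = \mu^{-k} = \nu^{-k}(1 + O(|\nu|^{-1}))$, with the discrepancy absorbed into the stated error. Similarly, $\bQ^k_\nu(\cosh r) \sim 2^{k-1}\Gamma(k)/[\Gamma(k+\nu+1)\,r^k]$ and $K_k(\mu r) \sim 2^{k-1}\Gamma(k)/(\mu r)^k$ yield $C_2(\nu) = \mu^k/\Gamma(k+\nu+1)$, again up to a factor $1 + O(|\nu|^{-1})$.

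The main obstacle is the verification step for Olver's hypotheses at the regular singular point $r = 0$: one must check that the residual potential produced by the Liouville transformation (the difference between $\sinh^{-2}r$ and $r^{-2}$, together with $g$) is smooth on $[0, r_0]$ and has finite total variation, so that Olver's error control descends all the way to $r = 0$ and is uniform on the whole interval. Once this uniformity is in place, the matching of prefactors is routine and the constants $\nu^{-k}$ and $\nu^k/\Gamma(k+\nu+1)$ emerge as stated.
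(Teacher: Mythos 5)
The paper offers no proof of this proposition at all---it is quoted directly from Olver [Thm.~12.9.1 and \S 12.12]---and your proposal is essentially a reconstruction of Olver's argument, so the approach matches. One correction: after the substitution $W = (\sinh r)^{1/2}\,P^{-k}_\nu(\cosh r)$ the normal form is $W'' = \bigl[(\nu+\tfrac12)^2 + (k^2-\tfrac14)\sinh^{-2} r\bigr]W$ (the constant terms produced by the Liouville reduction cancel exactly), not the oscillatory form you wrote with $+(\nu+\tfrac12)^2$ inside $W''+[\cdots]W=0$; with your sign the comparison solutions would be $J_k$ and $Y_k$ rather than $I_k$ and $K_k$. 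With the sign fixed, the residual relative to the exact modified Bessel comparison equation is $(k^2-\tfrac14)(\sinh^{-2}r - r^{-2})$, which extends smoothly to $r=0$ (it tends to $-\tfrac13(k^2-\tfrac14)$), so the verification you flag as the main obstacle is immediate and Olver's error control is uniform on $[0,r_0]$; the identification of the prefactors via \eqref{homog.0asym} and the small-argument behavior of $I_k$ and $K_k$ then goes through as you describe.
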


Using standard estimates for the modified Bessel functions, we obtain the bounds
\begin{equation}\label{IK.bounds}
\abs{I_k(z)} \le C_{k,\vep} h^+_k(z),\qquad \abs{K_k(z)} \le C_{k,\vep} h^-_k(z),
\end{equation}
for $\abs{\arg z} < \tfrac{\pi}2 - \vep$ where
\[
h^\pm_k(z) := \begin{cases}\abs{z}^{\pm k} & \abs{z} \le 1, \\
\abs{z}^{-\frac12} e^{\pm \re z} & \abs{z} > 1. \end{cases}
\]
For bounded $k$ the inductive estimates are furnished by the following:
\begin{lemma}
For $s = \nh+ae^{i\theta}$ with $\abs{\theta} \le \tfrac{\pi}2-\vep$ and $a \ge N_{\vep, V}$, 
\begin{equation}\label{J.Fminus}
\int_r^{r_0} \abs{J^k(s;r,t) V(t)} h^-_k(tae^{i\theta}) (\sinh t)^{-\frac{n-1}2}\>dt \le \frac{C_{k,\vep,V}}{a} h^-_k(rae^{i\theta})(\sinh r)^{-\frac{n-1}2},
\end{equation}
and
\begin{equation}\label{J.Fplus}
\begin{split}
&\int_r^{r_0} \abs{J^k(s;r,t) V(t)} h^+_k(tae^{i\theta}) (\sinh t)^{-\frac{n-1}2}\>dt \\
&\qquad\le  \frac{C_{k,\vep,V}}{a} \Bigl[h^+_k(rae^{i\theta}) + (a \cos\theta)^{-\sigma} e^{2r_0a\cos\theta} h^-_k(rae^{i\theta}) \Bigr] 
(\sinh r)^{-\frac{n-1}2}.
\end{split}
\end{equation}
\end{lemma}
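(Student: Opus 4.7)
The plan is to parallel Lemma~\ref{Jekp.lemma}, substituting the Bessel-function asymptotics of Proposition~\ref{PQ.IK.prop} for the Liouville transformation formulas used in the high-frequency regime.  Inserting these asymptotics into the definition \eqref{J.def} of $J^k$, the $\nu^{\pm k}$ factors cancel, as does the $\Gamma(k+\nu+1)$ prefactor; using $\sinh x \asymp x$ on $[0,r_0]$ together with \eqref{IK.bounds}, I obtain, for $a$ large enough to absorb the $O(|\nu|^{-1})$ correction from Proposition~\ref{PQ.IK.prop},
\[
|J^k(s;r,t)| \le C_{k,\vep}(\sinh r)^{-\frac{n-1}{2}}\sinh t\,\bigl[h^+_k(rae^{i\theta})h^-_k(tae^{i\theta})+h^+_k(tae^{i\theta})h^-_k(rae^{i\theta})\bigr].
\]
The key elementary input, used repeatedly below, is the product identity $h^+_k(z)h^-_k(z)\le\min(1,|z|^{-1})$, together with the monotonicity of $h^+_k$ (increasing in $|z|$) and $h^-_k$ (decreasing in $|z|$), valid for $|\arg z|\le\tfrac{\pi}2-\vep$.

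For \eqref{J.Fminus}, multiplying the kernel bound by $h^-_k(tae^{i\theta})$ produces two pieces, each of which, using $h^+_k(rae^{i\theta})\le h^+_k(tae^{i\theta})$ for $r\le t$ and the product identity at $tae^{i\theta}$, is bounded by $h^-_k(rae^{i\theta})\min(1,(ta)^{-1})$.  Integrating against $|V(t)|\sinh t$ on $[r,r_0]$ and splitting by whether $ta\ge 1$ yields the claimed $C_{k,\vep,V}a^{-1}h^-_k(rae^{i\theta})$.

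For \eqref{J.Fplus}, multiplying by $h^+_k(tae^{i\theta})$ leads to two analogous pieces.  The first, $h^+_k(rae^{i\theta})h^+_k(tae^{i\theta})h^-_k(tae^{i\theta})$, is bounded by $h^+_k(rae^{i\theta})\min(1,(ta)^{-1})$ using the product identity at $t$, yielding $Ca^{-1}h^+_k(rae^{i\theta})$ after integration.  The second piece, $h^-_k(rae^{i\theta})h^+_k(tae^{i\theta})^2$, concentrates exponentially at $t=r_0$: in the regime $ta\ge 1$ it equals $h^-_k(rae^{i\theta})(ta)^{-1}e^{2ta\cos\theta}$, and the Laplace bound of Proposition~\ref{Laplace.prop} under the hypothesis \eqref{V.asym} gives
\[
\int_r^{r_0} e^{2ta\cos\theta}\abs{V(t)}\sinh t\,dt \le C_V(a\cos\theta)^{-\sigma}e^{2r_0a\cos\theta};
\]
the factor $(ta)^{-1}$ at $t\approx r_0$ supplies the remaining $a^{-1}$, completing the second claimed term.

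The main obstacle is the case-by-case book-keeping across the regime switch in $h^\pm_k$ at $|z|=1$: each integration must be split into $ta<1$ and $ta\ge 1$ and bounded separately, since the functions take power-law form in the first regime and exponential form in the second.  A secondary concern is that the $O(|\nu|^{-1})$ error from Proposition~\ref{PQ.IK.prop} contributes at most an $O(a^{-1})$ correction to each integral, which is uniform on $[0,r_0]$ and is absorbed into $C_{k,\vep,V}$ for $a$ sufficiently large.
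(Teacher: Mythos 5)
Your argument follows the paper's own proof almost exactly: bound $|J^k|$ via Proposition~\ref{PQ.IK.prop} and \eqref{IK.bounds}, reduce to integrals of $h^\pm_k$ against $|V(t)|\sinh t$, and invoke Proposition~\ref{Laplace.prop} for the $h^+_k(tae^{i\theta})^2$ piece. The ``product identity'' $h^+_k(z)h^-_k(z)=\min(1,|z|^{-1})$ together with the monotonicity of $h^\pm_k$ is a nice way to make explicit what the paper compresses into ``the estimate \eqref{J.Fminus} then follows easily from the definition of $h^\pm_k$.'' One small slip: your displayed kernel bound has $\sinh t$ where the exact computation from \eqref{J.def} and Proposition~\ref{PQ.IK.prop} gives $(\sinh t)^{\frac{n+1}{2}}$; the version with $\sinh t$ is a \emph{weaker} upper bound, and if one then multiplies by the $(\sinh t)^{-\frac{n-1}{2}}$ in the integrand one gets $(\sinh t)^{\frac{3-n}{2}}$, which is not integrable near $t=0$ for $n\ge 5$. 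Your subsequent phrase ``integrating against $|V(t)|\sinh t$'' shows you were in fact using the sharper $(\sinh t)^{\frac{n+1}{2}}$ bound (which cancels to $\sinh t$ after the $(\sinh t)^{-\frac{n-1}{2}}$ factor is accounted for), so this is only a typo in the displayed inequality, not a gap in the reasoning, but it is worth correcting to keep the argument valid in all dimensions.
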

\begin{proof}
By Proposition~\ref{PQ.IK.prop} and \eqref{IK.bounds} we can estimate
\[
\abs{J^k(s;r,t)} \le C_{k,\vep} \Bigl[ h^-_k(rae^{i\theta})h^+_k(tae^{i\theta}) +  h^+_k(rae^{i\theta})h^-_k(tae^{i\theta}) \Bigr]
(\sinh r)^{-\frac{n-1}2} (\sinh t)^{\frac{n+1}2}.
\]
The estimate \eqref{J.Fminus} the follows easily from the definition of $h^\pm_k$.

Using the same estimate for $J^k(s;r,t)$ we break up the left-hand side of \eqref{J.Fplus} into two terms.  Estimation of the term with 
$h^+_k(rae^{i\theta})$ in front works just as in the estimate for \eqref{J.Fminus}.   The term with $h^-_k(rae^{i\theta})$
out front involves the integral
\[
\int_{r}^{r_0} h^+_k(tae^{i\theta})^2 \abs{V(t)} \sinh t\>dt.
\]
Using the Laplace estimate from Proposition~\ref{Laplace.prop} we have
\[
\begin{split}
\int_{\max(r,1/a)}^{r_0} h^+_k(tae^{i\theta})^2 \abs{V(t)} \sinh t\>dt 
& =  \int_{\max(r,1/a)}^{r_0} (at)^{-1}  e^{2ta\cos\theta} \abs{V(t)} \sinh t \>dt,  \\
& \le \frac{C_V}{a} (a \cos\theta)^{-\sigma} e^{2r_0a\cos\theta}.
\end{split}
\]
If $r < 1/a$ then we must also consider
\[
\int_{r}^{1/a} h^+_k(tae^{i\theta})^2 \abs{V(t)} \sinh t\>dt = \int_{r}^{1/a} (at)^{2k} \abs{V(t)} \sinh t\>dt \le \frac{C_V}{a}.
\]
For $a$ sufficiently large, we will have $(a \cos\theta)^{-\sigma} e^{2r_0a\cos\theta} \ge 1$, so that this extra term may
be combined with the Laplace estimate term.
\end{proof}

\bigbreak
\begin{proof}[Proof of Proposition~\ref{lamk.prop} (part two)]
To complete the estimate, we will show that for $k$ fixed, 
$r \in [0,r_0]$, $\abs{\theta} < \tfrac{\pi}2 - \vep$, and $\dist(ae^{i\theta},\bbZ/2)>\vep$, we have
\begin{equation}\label{lamk.lowf}
\abs{\Lambda_k(\nh+ae^{i\theta})} \asymp a^{-1-\sigma}e^{2r_0a\cos\theta} + O(a^{-1}),
\end{equation}
with constants that depend only on $k,\vep,V$.  From the definition \eqref{H.def} of $H(\alpha;r)$, we can easily check,
for $r$ fixed and $\abs{\alpha}\ge 1$, that
\[
H(\alpha; r) = 2r\re \alpha + O(1).
\]
Thus \eqref{lamk.lowf} implies \eqref{lamk.asymp} in the case where $k$ is bounded.

For this argument we set $\nu = -\frac12 + ae^{i\theta}$, and we make the assumption 
According to Proposition~\ref{PQ.IK.prop} and \ref{IK.bounds}, we have
\[
\abs{v^k_0(\nh+ae^{i\theta};r)}  \le C_{k,\vep} a^{-k} h^-_k(rae^{i\theta}) (\sinh r)^{-\frac{n-1}2}.
\]
By induction using Lemma~\ref{J.Fminus}, we obtain the estimates
\[
\abs{v^k_j(\nh+ae^{i\theta};r)} \le C_{k,\vep,V}^j a^{-k-j} h^-_k(rae^{i\theta}) (\sinh r)^{-\frac{n-1}2}.
\]
Using \eqref{leg.refl} in addition to Proposition~\ref{PQ.IK.prop} and \ref{IK.bounds} yields
\[
\abs{v^k_0(\nh+ae^{i\theta};r)}  \le C_{k,\vep} \Bigl[\abs{\beta^+_k(ae^{i\theta})} h^+_k(rae^{i\theta}) + \abs{\beta^-_k(ae^{i\theta})} 
h^-_k(rae^{i\theta}) \Bigr] (\sinh r)^{-\frac{n-1}2},
\]
where
\[
\beta^+_k(z) := \Gamma(k+z+\tfrac12)\sin(\pi z) z^{-k},
\]
and
\[
\beta^-_k(z) := \frac{z^k}{\Gamma(k-z+\tfrac12)}.
\]
The induction argument corresponding to that of Lemma~\ref{vj.bad} then gives
\[
\begin{split}
\abs{v^k_j(\nh-ae^{i\theta};r)} & \le \left(\frac{C_{k,\vep,V}}{a}\right)^j \biggl[ \abs{\beta^+_k(ae^{i\theta})} h^+_k(rae^{i\theta}) \\
&\quad + \Bigl( j \abs{\beta^+_k(ae^{i\theta})} (a\cos \theta)^{-\sigma} e^{2ar_0\cos\theta} + \abs{\beta^-_k(ae^{i\theta})} \Bigr) h^-_k(rae^{i\theta}) \biggr] (\sinh r)^{-\frac{n-1}2}.
\end{split}
\]

These estimates show that the two series $\sum v^k_j(\nh\pm ae^{i\theta})$ converge for $a$ sufficiently large.  By arguing 
as in Lemma~\ref{Aj.rem}, we find
\[
\Lambda_k(\nh+ae^{i\theta}) \asymp 1 + \frac{F^k_1(\nh-ae^{i\theta})}{F^k_0(\nh-ae^{i\theta})},
\]
for $a$ sufficiently large, 
with constants that depend only on $k, \vep$, and $V$.  As in the proof of Lemma~\ref{A1A0.lemma}, we split
\[
\frac{F^k_1(\nh-ae^{i\theta})}{F^k_0(\nh-ae^{i\theta})} = I_1 + I_2,
\]
where
\[
I_1 :=  \Gamma(k+ae^{i\theta}+\tfrac12) \Gamma(k-ae^{i\theta}+\tfrac12) \sin (\pi ae^{i\theta})
\int_0^{r_0} [P^{-k}_{-\nu}(\cosh t)]^2 V(t)\sinh t\>dt,
\]
and
\[
I_2 := \Gamma(k+ae^{i\theta}+\tfrac12) \int_0^{r_0} P^{-k}_{\nu}(\cosh t) \bQ^{k}_{\nu}(\cosh t) V(t)\sinh t\>dt.
\]
Using Proposition~\ref{PQ.IK.prop} and the well-known asymptotic
\[
I_k(z) = \frac{e^z}{(2\pi z)^\frac12} (1 + O(1/z)),
\]
valid for $\abs{\arg z} \le \frac{\pi}2$, we can reduce $I_1$ to
\[
I_1 \asymp \Gamma(k+ae^{i\theta}+\tfrac12) \Gamma(k-ae^{i\theta}+\tfrac12) \sin (\pi ae^{i\theta}) a^{-2k-1}
\int_0^{r_0} e^{2rae^{i\theta}} V(t)\>dt.
\]
Proposition~\ref{Laplace.prop} then gives
\[
I_1 \asymp \Gamma(k+ae^{i\theta}+\tfrac12) \Gamma(k-ae^{i\theta}+\tfrac12) \sin (\pi ae^{i\theta}) a^{-2k-1-\sigma} 
e^{2r_0a\cos\theta}.
\]
Applying Stirling's formula, and assuming $\dist(ae^{i\theta},\bbZ/2) > \vep$ in addition to the other hypotheses, we find
\[
I_1 \asymp a^{-1-\sigma}e^{2r_0a\cos\theta} + O(a^{-1}),
\]
with constants depending on $k,\vep,V$.

The second integral, $I_2$, is easily seen via Proposition~\ref{PQ.IK.prop} to be $O_{k,\vep,V}(1)$, so the result follows.
\end{proof}

\bigbreak
\section{Radial scattering determinant estimate}\label{rscdet.sec}

In this section we will complete the proof of Theorem~\ref{asymp.thm} by establishing an asymptotic for $\tau(s)$ in the radial potential case.   In terms of the matrix elements $\Lambda_k(s)$ defined in \eqref{tau.asymp}, the relative scattering determinant is
\[
\tau(s) = \prod_{l}  \Lambda_k(s)^{\mu_n(l)},
\]
where $k := l + (n-1)/2$ and $\mu_n(l)$ is the multiplicity \eqref{hnl.def}.  Our main goal is the following:
\begin{theorem}\label{tau.asymp}
Suppose $\tau(s)$ is the relative scattering determinant corresponding to a radial potential $V = V(r) \in L^\infty[0,r_0]$, such that $V$ is continuous near $r_0$ and satisfies
\[
V(r) \sim \kappa (r_0-r)^{\sigma-1},
\]
for some $\sigma\ge 1$.  Then assuming $\abs{\theta} < \tfrac{\pi}2 - \vep $ and $\dist(a,\bbZ/2) >\vep$
\[
\log \abs{\tau(\nh+ae^{i\theta})} = h_{r_0}(\theta) a^{n+1} + o(a^{n+1}),
\]
where $h_{r_0}(\theta)$ was defined in \eqref{indicator.def}.
\end{theorem}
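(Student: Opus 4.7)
\medskip

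\noindent\textbf{Proof proposal.}
The plan is to take the logarithm of the product
$\tau(s) = \prod_{l} \Lambda_k(s)^{\mu_n(l)}$ (with $k = l + \tfrac{n-1}{2}$) and analyze the resulting sum
\[
\log \abs{\tau(\tfrac{n}{2}+ae^{i\theta})} = \sum_{l\ge 0} \mu_n(l) \log\abs{\Lambda_k(\tfrac{n}{2}+ae^{i\theta})}
\]
by feeding in the sharp two-sided asymptotic for $\abs{\Lambda_k}$ from Proposition~\ref{lamk.prop}.  Since Proposition~\ref{tau.upper} already provides the upper bound with error $O(a^n\log a)$ (via \eqref{Splus}, \eqref{Szero}, and the term $\Sigma_-$), the job of this proof is really to produce a matching lower bound of the form $h_{r_0}(\theta)a^{n+1} + o(a^{n+1})$.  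The hypotheses on $\dist(ae^{i\theta},\bbZ/2)$ and on $\theta$ are exactly those needed to apply Proposition~\ref{lamk.prop}.

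The sum will be split, as in the proof of Proposition~\ref{tau.upper}, according to the sign of $H(k^{-1}ae^{i\theta},r_0)$.  In the principal range $0<k\le a/\varrho(\theta)$ one has $H(k^{-1}ae^{i\theta},r_0) > 0$, so that the $O(1)$ term in Proposition~\ref{lamk.prop} is dominated by the exponential and we obtain
\[
\log \abs{\Lambda_k(\tfrac{n}{2}+ae^{i\theta})} = k\,H(k^{-1}ae^{i\theta};r_0) - \tfrac{\sigma+1}{2}\log(k^2+a^2) + O(1),
\]
with implicit constants depending only on $\vep$ and $V$.  Using $\mu_n(l) = \tfrac{2}{\Gamma(n)} k^{n-1} + O(k^{n-2})$ and approximating the Riemann sum by an integral, with the substitution $x = a/k$, yields
\[
\sum_{0<k\le a/\varrho(\theta)} \mu_n(l)\, k\, H(k^{-1}ae^{i\theta};r_0) = \frac{2a^{n+1}}{\Gamma(n)} \int_{\varrho(\theta)}^\infty \frac{H(xe^{i\theta},r_0)}{x^{n+2}}\,dx + o(a^{n+1}) = h_{r_0}(\theta)\,a^{n+1} + o(a^{n+1}),
\]
after accounting for the replacement $r_3 \leftrightarrow r_0$ as in the upper bound proof.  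The logarithmic correction $-\tfrac{\sigma+1}{2}\log(k^2+a^2)$ and the error $O(1)$ both contribute at order $a^n\log a$, hence $o(a^{n+1})$.

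The transitional range $a/\varrho(\theta) < k \lesssim a/\varrho(\theta)$ (the analog of $\Sigma_0$) carries $O(a)$ terms, each of size $O(\log a)$ after multiplication by $\mu_n(l) = O(a^{n-1})$, giving again $O(a^n \log a)$.  For $k \gg a/\varrho(\theta)$, Proposition~\ref{lamk.prop} forces $\abs{\Lambda_k}\asymp 1$, contributing at most $O(1)$ per term over an exponentially small effective range.  The low-frequency terms (bounded $k$, handled by the second part of Proposition~\ref{lamk.prop}, where $\log|\Lambda_k| = O(a)$) contribute $O(a)$ in total, which is absorbed into the $o(a^{n+1})$ error.  Combining the three pieces, we obtain the lower bound matching Proposition~\ref{tau.upper}, which finishes the proof.

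The main obstacle I anticipate is not the upper bound (already proved) but the lower bound on the terms in the exponentially small range: one needs Proposition~\ref{lamk.prop} to be a two-sided $\asymp$ estimate rather than a mere inequality, and one needs to be careful that the $(\sigma+1)/2$ logarithmic prefactor and the additive $O(1)$ term in $|\Lambda_k|$ do not conspire to produce cancellations of order $a^{n+1}$.  The Riemann sum-to-integral step is routine once one controls these error terms uniformly in the summation range, which is the content of Proposition~\ref{lamk.prop}.
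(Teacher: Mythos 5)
Your overall architecture is the right one and matches the paper's: the upper bound is quoted from Proposition~\ref{tau.upper}, and the lower bound is obtained by splitting the sum $\sum_l \mu_n(l)\log\abs{\Lambda_k}$ according to the sign of $H(k^{-1}ae^{i\theta};r_0)$ and feeding the two-sided estimate of Proposition~\ref{lamk.prop} into the principal range. That part of your argument is essentially the paper's $\Sigma_+$ (modulo the caveat that near the edge $k \approx a/\varrho(\theta)$ the exponential is $\asymp 1$ and does \emph{not} dominate the additive $O(1)$; the paper therefore shrinks the principal range to $a/k \ge \varrho(\theta)(1+a^{-1/2})$, where $kH \ge c_\vep a^{1/4}$, and widens the transitional band accordingly).

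The genuine gap is in the two remaining ranges, where you only have upper bounds but assert lower bounds. In the transitional zone $H \approx 0$, Proposition~\ref{lamk.prop} gives $\abs{\Lambda_k} = O(1)$ because the additive $O(1)$ term can cancel against the first summand; this zone contains actual zeros of $\Lambda_k$ (these are resonances), so no pointwise bound of the form $\log\abs{\Lambda_k} \ge -C\log a$ can hold, and your claim that each term is ``of size $O(\log a)$'' is a one-sided estimate used where the two-sided one is needed. The paper supplies the missing ingredient in Proposition~\ref{inter.prop}: a Minimum Modulus Theorem argument applied to the entire functions $F^k(s)$, valid off $\brak{s}^{-\beta}$-neighborhoods of the zero set, giving $\log\abs{\Lambda_k} \ge -c(k+\brak{s})(1+\log\brak{s})$; combined with the fact that only $O(a^{1/2})$ values of $k$ lie in the band, this yields $\Sigma_0 = O(a^{n+1/2}\log a)$. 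Similarly, in the tail $k \gg a/\varrho(\theta)$ your assertion that Proposition~\ref{lamk.prop} ``forces $\abs{\Lambda_k}\asymp 1$'' does not follow — that proposition again only gives $\abs{\Lambda_k}=O(1)$ there — and in any case ``$O(1)$ per term'' summed over the infinitely many remaining $k$ with weights $\mu_n(l)\asymp k^{n-1}$ diverges. What is needed, and what the paper proves as Proposition~\ref{lamk.upper.prop}, is that $\abs{\Lambda_k - 1} \le C\delta^{-4}k^{-1}e^{kH(\alpha;r_0+\delta)}$ is exponentially small (with the choice $\delta \sim a^{-1/2}$), so that $\log\abs{\Lambda_k} \ge -c\abs{\Lambda_k-1}$ and the tail sums to $O(e^{-c_\vep\sqrt{a}})$. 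Without these two ingredients the lower bound does not close.
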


The leading contribution to this estimate comes from terms for which $\Lambda_k(\nh+ae^{i\theta})$ exhibits exponential growth for large $k \asymp a$.  This is the case covered by Proposition~\ref{lamk.prop}. 
We must also account for the cases where the exponent $H(\alpha; r)$ is near zero or negative, for which Proposition~\ref{lamk.prop} gives no information.
For the exponential decay estimate we 
turn to the formula for the relative scattering matrix used in \cite{Borthwick:2010}.  
\begin{prop}\label{lamk.upper.prop}
For $\abs{\arg \alpha} < \tfrac{\pi}2 - \vep$, $\abs{k\alpha} \ge N_\vep$, and $\delta > 0$
\begin{equation}\label{lamk.upper}
\abs{\Lambda_k(\nh+k\alpha)-1} \le C_{\vep,r_0} \delta^{-4} k^{-1} e^{k H(\alpha; r_0+\delta)},
\end{equation}
where $H(\alpha;r)$ was defined in \eqref{H.def}.
\end{prop}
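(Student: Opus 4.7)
The plan is to adapt the cutoff-commutator argument from the proof of Proposition~\ref{tau.upper}, but with cutoffs supported on the $\delta$-scale rather than the $1/a$-scale. This pushes the Legendre-function exponentials from $r_0 + 3/a$ out to $r_0 + \delta$, which is exactly what we need to recover $H(\alpha; r_0+\delta)$ in the exponent.

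First I would fix cutoffs $\chi_1, \chi_2 \in C^\infty([0,\infty))$ with $\chi_j \equiv 1$ on a neighborhood of $\supp V$, where $\chi_1'$ is supported in $[r_0 + \delta/3, r_0 + 2\delta/3]$ and $\chi_2'$ in $[r_0 + 2\delta/3, r_0 + \delta]$, satisfying $\|\partial_r^m \chi_j\|_\infty = O(\delta^{-m})$. The formula from \cite[Lem.~4.1]{Borthwick:2010},
\[
S_V(s) S_0(s)^{-1} - 1 = (2s-n) E_0(s)^t [\Delta, \chi_2] R_V(s) [\Delta, \chi_1] E_0(n-s),
\]
restricted to the $l$-th spherical harmonic mode gives an explicit integral formula for $\Lambda_k(s) - 1$.

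Next, applying Cauchy--Schwarz exactly as in the derivation of \cite[Lem.~5.2]{Borthwick:2010} yields
\[
|\Lambda_k(s) - 1| \leq A(s)\,\lambda_l(s),
\]
where $A(s) = \|[\Delta, \chi_2] R_V(s) [\Delta, \chi_1]\|$ and $\lambda_l(s)$ is the radial-integral expression from the proof of Proposition~\ref{tau.upper}, but with the original cutoff positions $r_1, r_2, r_3$ replaced by $r_0 + \delta/3, r_0 + 2\delta/3, r_0 + \delta$. By the same analysis used there, the cutoff resolvent and commutator bounds yield $A(s) \leq C_{\vep, r_0}\,\delta^{-5}$, the direct analog for general $\delta$ of the estimate $A \leq Ca^5$ established for the $\delta = 1/a$ case. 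For $\lambda_l(s)$, the Legendre bound \eqref{p.upper} applied to integrals over intervals of width $\delta/3$ (each contributing a factor of $\delta$), combined with the Stirling estimate \eqref{Stir.H} on the Gamma prefactor and the identity \eqref{H.2pp} for $H(\alpha; r)$, gives
\[
\lambda_l(s) \leq C_{r_0}\,\delta\,k^{-1}\,e^{kH(\alpha; r_0+\delta)}.
\]
Multiplying these two bounds produces the claimed estimate.

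The main obstacle is establishing the $\delta^{-5}$ bound on $A(s)$ uniformly in $a$. The key observation is that the factor $|2s-n|$ on the scattering-matrix side, together with elliptic regularity for $R_V(s)$ on the chain $H^{-2} \to H^2$, combines to cancel the $a$-dependence in the cutoff resolvent bound from Proposition~\ref{res.est}; the remaining $\delta^{-5}$ then comes from the two commutator coefficients $\|\chi_j''\|_\infty = O(\delta^{-2})$ and one additional factor arising from the Sobolev mapping properties of $[\Delta, \chi_j]$. Once this uniform bound on $A$ is established, the conclusion follows immediately from the estimate on $\lambda_l$.
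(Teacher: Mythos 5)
Your approach is essentially the same as the paper's: start from the cutoff-commutator identity for $S_V S_0^{-1} - 1$, specialize to the $l$-th spherical harmonic mode to get a scalar matrix element, apply Cauchy--Schwarz to split off a cutoff resolvent norm and two Legendre integrals, and then combine the Legendre bound (Proposition~\ref{PQ.bounds}) with the Stirling estimate \eqref{Stir.H} and the identity \eqref{H.2pp}. Your treatment of $\lambda_l$ is correct, including the extra factor of $\delta$ from integrating over intervals of width $\delta/3$, and your overall exponent count $\delta^{-5}\cdot\delta = \delta^{-4}$ lands on the claimed bound.

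The weak point is your claimed bound $A(s)\le C\delta^{-5}$, which you obtain by a ``direct analog'' substitution of $a\to 1/\delta$ in the $Ca^5$ bound from Proposition~\ref{tau.upper}. This substitution is not legitimate: in that proof, $a$ plays two independent roles --- as the spectral parameter in the resolvent estimate $\norm{\psi_1 R_0 \psi_2}_{\mathcal{L}(H^0,H^2)}\le Ca$, and as the inverse of the cutoff scale via $\norm{\partial_r^m\chi_j}_\infty = O(a^m)$. Here the cutoff scale is $\delta$ but the spectral parameter is still $a$, so the naive Sobolev accounting gives $O(a\,\delta^{-4})$, not $O(\delta^{-5})$, and one must work harder (or, as the paper does, invoke the commutator argument behind its estimate \eqref{RV.est}) to get a bound uniform in $a$. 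Your subsequent paragraph, attributing the $a$-uniformity to the factor $|2s-n|$, is also misplaced: $|2s-n|$ appears in the matrix-element formula, not in $A(s)$, and in the paper its factor of $a$ is absorbed in the Gamma-reflection identity $\Gamma(s-\tfrac{n}{2}+1)\Gamma(\tfrac{n}{2}+1-s) = \pi(s-\tfrac{n}{2})/\sin\pi(s-\tfrac{n}{2})$, which is part of the $\lambda_l$ accounting rather than the $A(s)$ accounting. In fact the paper claims the \emph{stronger} bound $A(s)\le C\delta^{-4}$ uniform in $a$ (its \eqref{RV.est}) and drops the integral's extra $\delta$; your version keeps the $\delta$ but starts from a weaker (and insufficiently justified) $\delta^{-5}$, arriving at the same place by a different split. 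So the skeleton is right, but you should either prove $A(s)\le C\delta^{-4}$ directly (following the commutator argument cited in the paper) and then note the extra $\delta$ from the integrals as a bonus, or give a genuine argument for $a$-uniformity rather than the heuristic substitution.
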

\begin{proof}
Set $r_j = r_0 + \tfrac13 j\delta$ for $j=1,2,3$.
Let $\psi\in\cinf(\bbR)$ be a cutoff function with $\psi(t)=0$ for $t\le 0$ and $\psi(t)=1$ for $t\ge 1$.  Then set
$\chi_j(r) = \psi((r-r_j)/\delta)$, so that $\chi_j = 1$ for $r\le r_j$ and $\chi_j = 0$ for $r \ge r_{j+1}$.
Then from the proof of \cite[Lemma~4.1]{Borthwick:2010} we have
\[
S_V(s)S_0(s)^{-1} - 1 = (2s-n) E_0(s)^t [\Delta, \chi_2] R_V(s) [\Delta,\chi_1] E_0(n-s),
\]
where $E_0(s)$ is the unperturbed Poisson operator on $\bbH^{n+1}$.  Since $V$ is radial, $S_V(s)$ is diagonalized by spherical harmonics and $\Lambda_k(s)$ may be computed as a matrix element,
\begin{equation}\label{lamk.matrix}
\begin{split}
\Lambda_k(s) - 1 &= \brak{Y_l^m, \bigl[S_V(s)S_0(s)^{-1} - 1 \bigr] Y_l^m} \\
& = (2s-n) \Bigl\langle \chr_{[r_2,r_3]} E_0(s)Y_l^m, [\Delta, \chi_2] R_V(s) [\Delta,\chi_1] \chr_{[r_1,r_2]} E_0(n-s)Y_l^m \Bigr\rangle, 
\end{split}
\end{equation}
where $\chr_{[r_i,r_{i+1}]}$ denotes the multiplication operator of the characteristic function $\chi_{[r_i,r_{i+1}]}(r)$.
As in the proof of Proposition~\ref{tau.upper}, we can derive from Proposition~\ref{res.est} the bound
\begin{equation}\label{RV.est}
\bigl\Vert [\Delta_0, \chi_2] R_V(s) [\Delta_0,\chi_1] \bigr\Vert \le C_{\vep,r_0}\delta^{-4},
\end{equation}
under the assumptions $\abs{\arg(s-\nh)} < \tfrac{\pi}2 - \vep$ and $\abs{s-\nh} > N_\vep$, which keep $s(n-s)$ bounded away from 
the spectrum of $\Delta+V$.  

Also, using the decomposition of the Poisson operator in polar coordinate from \cite[Prop.~4.2]{Borthwick:2010}, 
we can compute
\[
E_0(s)Y_l^m = e_k(s;r) Y_l^m,
\]
where
\[
e_k(s;r) := 2^{-1-\nu} \pi^{1/2}  \frac{\Gamma(l+s)}{\Gamma(s-\frac{n}2 +1)}  (\sinh r)^{-\frac{n-1}2}  P^{-k}_{\nu}(\cosh r),
\]
with $\nu = s - \frac{n+1}2$ as before.
Thus from \eqref{lamk.matrix} and the resolvent estimate \eqref{RV.est} we have the bound
\[
\begin{split}
\abs{\Lambda_k(s)} &\le C_{\vep,r_0}\delta^{-4} |2s-n| \left[ \int_{r_1}^{r_2} \abs{e_k(n-s; r)}^2\>(\sinh r)^n\>dr \right]^{\frac12} 
\\
&\qquad \left[ \int_{r_2}^{r_3} \abs{e_k(s; r)}^2\>(\sinh r)^n\>dr\right]^{\frac12}.
\end{split}
\]
Substituting in with the definition if $e_k$ and using the reflection identity $P^{-k}_\nu = P^{-k}_{-1-\nu}$, we can rewrite this as
\[
\begin{split}
\abs{\Lambda_k(s)} & \le C_{\vep,r_0}\delta^{-4} \abs{\sin(\pi k\alpha) \Gamma(k+k\alpha+\tfrac12)  \Gamma(k-k\alpha+\tfrac12)} \\
&\quad \left[ \int_{r_1}^{r_2} \abs{P^{-k}_{\nu}(\cosh r)}^2 \sinh r\>dr \right]^{\frac12} 
\left[ \int_{r_2}^{r_3} \abs{P^{-k}_{\nu}(\cosh r)}^2 \sinh r\>dr \right]^{\frac12}.
\end{split}
\]
Applying Corollary~\ref{PQ.bounds} and using the fact that $\re\phi(\alpha;\cdot)$ is increasing 
allows us to reduce this estimate to 
\[
\abs{\Lambda_k(s)} \le C_{\vep,r_0}\delta^{-4} \abs{\frac{\sin(\pi k\alpha) \Gamma(k+k\alpha+\tfrac12)  \Gamma(k-k\alpha+\tfrac12)}{\Gamma(k+1)^2}} e^{2k\re[\phi(\alpha;r_3)+p(\alpha)]}.
\]
The result then follows from \eqref{Stir.H} and the definition of $H(\alpha;r)$.
\end{proof}

\bigbreak
The final issue that we need to resolve before proving Theorem~\ref{tau.asymp} is the behavior on the zone where 
$H(\alpha;r) \approx 0$.   This region contains non-trivial zeros of $\Lambda_k(s)$, so to produce a lower bound on 
$\log \Lambda_k(s)$ is delicate.  The only tool we have for this situation is the Minimum Modulus Theorem for entire functions.  

\begin{prop}\label{inter.prop}
For any $\beta>2$ we have
\[
\log \abs{\Lambda_k(s)} \ge - c_{\beta} (k+\brak{s})(1+\log \brak{s}),
\]
under the assumption that $\dist(s, \calR_0\cup (n - \calR_V)) \ge \brak{s}^{-\beta}$.
\end{prop}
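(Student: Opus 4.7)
Our approach mirrors the proof of Lemma~\ref{lind.lemma}: factor $\Lambda_k$ as a ratio of entire functions, bound each, and apply the Minimum Modulus Theorem. By \eqref{L.FF},
\[
\Lambda_k(s)=\frac{G_-(s)}{G_+(s)},\qquad G_-(s):=F^k(n-s)F^k_0(s),\quad G_+(s):=F^k(s)F^k_0(n-s),
\]
with $G_\pm$ entire in $s$ for fixed $k$. The zeros of $G_-$ lie in $\calR_0\cup(n-\calR_V)$, so the hypothesis keeps $s$ at distance at least $\brak{s}^{-\beta}$ from every zero of $G_-$ (while the conclusion is trivial near the zeros of $G_+$, which are poles of $\Lambda_k$).

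The first step is a uniform upper bound
\[
\log|G_\pm(s)|\le C(k+\brak{s})(1+\log\brak{s}),\qquad s\in\bbC.
\]
For $F^k_0(s)=2^{k-1}\Gamma(k)/\Gamma(k+s-\tfrac{n-1}{2})$ this is immediate from Stirling. For $F^k(s)=\lim_{r\to 0}r^{l+n-1}v^k(s;r)$ it follows from the iterative construction of \S\ref{asym.sec}: the homogeneous solution $v^k_0$ obeys such a bound via the Legendre estimates in Propositions~\ref{PQ.bounds} and~\ref{PQ.IK.prop} (which cover the regimes $|k\alpha|\ge 1$ and $k$ bounded, respectively), and the Volterra iteration~\eqref{vj.recur} preserves the bound because the kernel $J^k(s;r,t)$ itself admits the same growth estimate.

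The second step applies the Minimum Modulus Theorem to $G_-$. Since $\log M_{G_-}(r)=O((k+r)\log r)$, the function $G_-$ is of order $\le 1$ but of maximal type, so the naive Minimum Modulus statement only returns the weaker rate $-\brak{s}^{1+\epsilon}$. To recover the sharp rate we follow the strategy of Lemma~\ref{lind.lemma}: isolate the explicit $\Gamma^{-1}$ factor inside $F^k_0$ (whose minimum modulus admits an exact Stirling bound away from the negative integers) and apply \cite[Thm.~3.7.4]{Boas} to the residual entire factor, supplementing if necessary with a Lindel\"of pairing to convert any lingering maximal-type behavior into finite type. This yields
\[
\log|G_-(s)|\ge -c_\beta(k+\brak{s})(1+\log\brak{s})
\]
outside disks of radius $\brak{s}^{-\beta}$ around the zeros of $G_-$, valid for any $\beta>2$. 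Subtracting the upper bound on $\log|G_+|$ gives the conclusion. The main obstacle is precisely this Minimum Modulus step: the $\Gamma$-isolation and Lindel\"of pairing are essential to upgrade a maximal-type order-$1$ estimate into the sharp $(k+\brak{s})(1+\log\brak{s})$ rate demanded by the statement, and the condition $\beta>2$ is what ensures the exceptional disks have total radius controllable under the Hadamard factorization of the finite-type residual.
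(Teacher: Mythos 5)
The proposal follows essentially the same route as the paper: express $\Lambda_k$ via \eqref{L.FF} as a ratio of entire functions of $s$, prove a uniform upper bound of the form $C(k+\brak{s})(1+\log\brak{s})$ on each factor (Stirling for $F^k_0$, the iterative Volterra estimates for $F^k$), and then invoke the Minimum Modulus Theorem. Your concern about maximal type, however, is a red herring. The theorem cited, \cite[Thm.~3.7.4]{Boas}, is the Cartan-type estimate that bounds $\log\abs{f(z)}$ below by a constant multiple of $-\log M(2er)$ outside a small exceptional set; it is phrased directly in terms of $\log M$, not the type, so applied to $F^k$ it already yields the $(k+\brak{s})(1+\log\brak{s})$ rate with no Gamma-isolation or Lindel\"of pairing required, and the paper does exactly this. (The Lindel\"of pairing in Lemma~\ref{lind.lemma} is genuinely needed there because the target lower bound is the finite-type rate $\brak{z}^{n+1}$ with no logarithm; here the logarithm is permitted, so the ``upgrade'' you describe is superfluous. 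Moreover, the Gamma-isolation as you phrase it would not produce a finite-type residual: stripping the explicit $\Gamma^{-1}$ from $F^k_0$ still leaves $F^k(n-s)$, which by itself has $\log M = O(r\log r)$.) The one step you should make explicit, which the paper does, is a Phragm\'en--Lindel\"of argument to extend the upper bound on $F^k$ into the sectors near $\arg(s-\nh)=\pm\tfrac{\pi}{2}$, since Propositions~\ref{PQ.bounds} and~\ref{P.asym.prop} establish the Legendre asymptotics only for $\abs{\arg\alpha}\le\tfrac{\pi}{2}-\vep$.
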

\begin{proof}
The solutions $v^k(s;r)$ are analytic as functions of $s$, so the functions $F^k(s)$ appearing in the formula
\eqref{SV.AA} for the scattering matrix element are analytic as well.  

Using Stirling, we can easily produce a crude bound,
\[
\log\abs{F^k_0(s)} \le C(k+\brak{s})(1+\log \brak{s}),
\]
for all $z\in \bbC$ with with $C$ independent of $k$.  From the estimates on the series $F^k = \sum F^k_j$ 
produced in \S\ref{Hfreq.sec} and \S\ref{Lfreq.sec},
and the straightforward bound $H(\alpha;r_0) = O_{r_0}(\abs{\alpha})$, we claim the same result holds in 
the perturbed case:
\[
\log\abs{F^k(s)} \le C_V (k+\brak{s})(1+\log \brak{s}).
\]
Here $C$ depends on $V$ but not on $k$.
(Note our bounds on $F^k_j(s)$ and $F^k_j(n-s)$ omit the sectors $\abs{\arg(s-\nh)} \in [\tfrac{\pi}2 - \vep, \tfrac{\pi}2 - \vep]$,
 but this restriction was necessary only for the lower bounds.  A simple application of Phragm\'en-Lindel\"of extends the upper bounds to the missing sectors.)

We can now apply the Minimum Modulus Theorem (see e.g.~\cite[Thm.~3.7.4]{Boas}) to obtain a corresponding lower bound: for $\beta>2$
\[
\log \abs{F^k(s)} \ge - c_{\beta} (k+\brak{s})(1+\log \brak{s}).
\]
under the restriction that $\dist(s,n-\calR_V) \ge \brak{s}^{-\beta}$.  (Here we use also the fact that for any $k$ the zeros of $F^k(s)$ are included in the set $n-\calR_V$.)

Applying these upper and lower bounds to the matrix element formula \eqref{SV.AA} yields the result.
\end{proof}

\bigbreak
\begin{proof}[Proof of Theorem~\ref{tau.asymp}]
We have already noted the more general upper bound in Proposition~\ref{tau.upper}, so our goal here is to produce a
corresponding lower bound:
\begin{equation}\label{tau.lower}
\log \abs{\tau(\nh+ae^{i\theta})}  \ge h(\theta, r_0) a^{n+1} - o(a^{n+1}).
\end{equation}

By conjugation we can assume that $\theta \in [0,\tfrac{\pi}2-\vep)$.  We need to estimate the sum
\begin{equation}\label{tau.sum2}
\log \abs{\tau(\nh+ae^{i\theta})} = \sum_{k} \mu_n(k-\tfrac{n-1}2) \log \abs{1+ \Lambda_k(\nh+ae^{i\theta})}. 
\end{equation}
From the definition \eqref{hnl.def} we can estimate the multiplicities by
\begin{equation}\label{hnk.asymp}
\mu_n(k-\tfrac{n-1}2) = \frac{2k^{n-1}}{\Gamma(n)}(1 + O(k^{-1})).
\end{equation}

For $\theta \in [0,\tfrac{\pi}2]$, let $\varrho(\theta)$ be the implicit solution of $H(\varrho(\theta) e^{i\theta}, r_0) = 0$, as shown in Figure~\ref{hplot.fig}.  
Since 
\[
\del_xH(xe^{i\theta};r_0)|_{x=\varrho(\theta)} > c_\vep,
\]
for $\theta\in [0,\tfrac{\pi}2-\vep)$, we can see that
\[
H(xe^{i\theta}; r_0) > c_\vep a^{-\frac12}\quad\text{for }x \ge \varrho(\theta)(1+a^{-\frac12}),
\]
for $a$ sufficiently large.  Thus, under the assumption $a/k \ge \varrho(\theta)(1+a^{-\frac12})$, $k H(k^{-1}ae^{i\theta};r_0)$ is bounded below by $c_\vep a^{\frac14}$ for $k \ge ca^{\frac34}$.   On the other hand, for $k < ca^{\frac34}$, we have
$a/k > a^{-\frac14}/c$, and if $a/k$ is large we can use the approximation $kH(k^{-1}ae^{i\theta};r_0) \asymp r_0a\cos\theta$.   We conclude that, under these assumptions,
\[
kH(k^{-1}ae^{i\theta};r_0) \ge c_\vep a^{\frac14},
\]
for all $a$ sufficiently large.  Thus for $a/k \ge \varrho(\theta)(1+a^{-\frac12})$ with $a$ sufficiently large and $\dist(ae^{i\theta},\bbZ/2) >\delta$,
Proposition~\ref{lamk.prop} implies
\begin{equation}\label{logl.kH}
\log \abs{\Lambda_k(\nh+ae^{i\theta})} \ge kH(k^{-1}ae^{i\theta};r_0) - O(\log a).
\end{equation}

With this estimate in mind, we divide the sum \eqref{tau.lower} into three pieces, roughly according to the sign of $H$:
\[
\begin{split}
\Sigma_+&:\> \frac{a}{k} \ge \varrho(\theta)(1+a^{-\frac12}) \\
\Sigma_0&:\> \varrho(\theta)(1-a^{-\frac12}) < \frac{a}{k} < \varrho(\theta)(1+a^{-\frac12})\\
\Sigma_-&:\> \frac{a}{k} \le \varrho(\theta)(1-a^{-\frac12}).
\end{split}
\]
The dominant term is $\Sigma_+$, and from \eqref{hnk.asymp} and \eqref{logl.kH} we obtain
\[
\Sigma_+ \ge \sum_{k \le a/[\varrho(\theta)(1+a^{-\frac12})]} 
\frac{2k^{n-1}}{\Gamma(n)} H\left(\frac{ae^{i\theta}}{k}, r_0\right) - O(a^{n}).
\]
The summand is monotonic as a function of $k$, so we can estimate with an integral:
\[
\Sigma_+ \ge \frac{2}{\Gamma(n)} \int_{0}^{a/[\varrho(\theta)(1+a^{-\frac12})]} 
k^{n+1} H\left(\frac{ae^{i\theta}}{k}, r_0\right)\>dk - O(a^n).
\]
We can then make the substitution $x = a/k$ to reduce this to
\[
\Sigma_+ \ge  \frac{2a^{n+1}}{\Gamma(n)} \int_{\varrho(\theta)(1+a^{-\frac12})}^{\infty}
\frac{H(xe^{i\theta}, r_0)}{x^{n+2}}\>dx - O(a^n).
\]
Using the fact that $H(\alpha,r_0) = O(\abs{\alpha})$ for $\abs{\alpha}$ large, we can extend the lower limit of integration
by adjusting the error term,
\begin{equation}\label{Sp.final}
\Sigma_+ \ge  \frac{2a^{n+1}}{\Gamma(n)} \int_{\varrho(\theta)}^{\infty}
\frac{H(xe^{i\theta}, r_0)}{x^{n+2}}\>dx - O(a^{n}).
\end{equation}

Next we consider the middle term $\Sigma_0$.  Proposition~\ref{inter.prop} implies that $\log\abs{1+\Lambda_k}$ is 
$O(a\log a)$ for $k$ in the range defined by $a/[\varrho(\theta)(1\pm ae^{i\theta})]$, and there are $O(a^{\frac12})$ values 
of $k$ in this range.  Hence
\[
\Sigma_0 = O(a^{n+\frac12} \log a).
\]

Finally, for $\Sigma_-$, we will use Proposition~\ref{lamk.upper.prop} which shows that $\Lambda_k$ is exponentially small in this region, together with the simple estimate,
\[
\log\abs{1+\lambda} \ge -c_\eta\abs{\lambda}\quad\text{for }\abs{\lambda}\le\eta.
\]
This yields, for small $\delta>0$,
\[
\abs{\Lambda_k(\nh+ae^{i\theta})} \le C_{\vep, r_0} \delta^{-4} k^{-1} e^{kH(k^{-1}ae^{i\theta}; r_0+\delta)}.
\]
For $k \ge a/[\varrho(\theta)(1-a^{-\frac12})]$, we have
\[
kH(k^{-1}ae^{i\theta}; r_0+\delta) \le - c_\vep \sqrt{a} + aO_\vep(\delta),
\]
so by choosing $\delta = b_\vep a^{-\frac12}$, with $b_\vep$ sufficiently small, we can bound the exponent by $-c_\vep\sqrt{a}$.  
We obtain, for $a$ sufficiently large and $\abs{\theta} < \tfrac{\pi}2 - \vep$, 
\[
\log\abs{1+\Lambda_k(\nh+ae^{i\theta})}  \ge - c e^{-c_\vep \sqrt{a}}.
\]
This yields
\[
\Sigma_- \ge -O(e^{-c_\vep \sqrt{a}})
\]

Combining \eqref{Sp.final} with these estimates on $\Sigma_0$ and $\Sigma_-$ completes the proof of \eqref{tau.lower}.
\end{proof}

\bigbreak
\begin{proof}[Proof of Theorem~\ref{asymp.thm}]
Under the hypotheses of the theorem, Theorem~\ref{tau.asymp} gives
\[
\int_{-\frac{\pi}2+\vep}^{\frac{\pi}2-\vep} \tau(\nh + ae^{i\theta}) = a^{n+1} \int_{-\frac{\pi}2+\vep}^{\frac{\pi}2-\vep} h_{r_0}(\theta)\>d\theta + o_\vep(a^{n+1}),
\]
assuming $d(a, \bbZ/2)>\vep$.
To fill the gap near $\pm \tfrac{\pi}2$, we use the general estimate from Lemma~\ref{lind.lemma}.  This gives
\[
\int_{\frac{\pi}2-\vep \le \abs{\theta} \le \frac{\pi}2}  \tau(\nh + ae^{i\theta}) = O(\vep a^{n+1}),
\]
for $a$ in some unbounded set $J \subset \bbR_+$ that depends on the choice of $\beta$ in the lemma.
We conclude that 
\[
\int_{-\frac{\pi}2}^{\frac{\pi}2} \tau(\nh + ae^{i\theta}) = a^{n+1} \int_{-\frac{\pi}2}^{\frac{\pi}2} h_{r_0}(\theta)\>d\theta 
+ O(\vep a^{n+1}) + o_\vep(a^{n+1}),
\]
for $a \in J$.
Taking $\vep\to 0$ then gives
\[
\int_{-\frac{\pi}2}^{\frac{\pi}2} \tau(\nh + ae^{i\theta}) \sim a^{n+1} \int_{-\frac{\pi}2}^{\frac{\pi}2} h_{r_0}(\theta)\>d\theta.
\]
In combination with Proposition~\ref{count.prop}, this completes the proof.
\end{proof}

\bigbreak
\section{Distribution of resonances for generic potentials}\label{dist.sec}

In this section we'll develop the theorems on resonance distribution outlined in \S\ref{intro.sec}.  These results draw on techniques from Christiansen \cite{Christ:2005, Christ:2012}, Christiansen-Hislop \cite{CH:2005}, and Borthwick-Christiansen-Hislop-Perry \cite{BCHP}.  We will only sketch the proofs for cases that are very similar to these earlier results.

Before stating the results we recall a standard definition from several complex variables.  A
\emph{pluripolar} subset $E$ of a connected domain $\Omega \subset \bbC^p$ is the polar set of a plurisubharmonic function $\psi$ on $\Omega$, i.e.~ the set  $\psi^{-1}\{-\infty\}$.  Pluripolar sets have Lebesgue measure zero in $\bbC^p$.  Moreover, for $p=1$ the real part $E \cap \bbR$ of a pluripolar set $E$ will have Lebesgue measure zero in $\bbR$.

\subsection{Asymptotics of the counting function}
The first result shows that the order of growth in the bound $N_V(t) = O(t^{n+1})$ of Theorem~\ref{upper.thm} is saturated for generic potentials.  
\begin{theorem}\label{Kgen.thm}
Let $F = \bbR$ or $\bbC$.
Given a compact subset $K\subset \bbH^{n+1}$ with non-empty interior, the set
\[
\left\{V \in L^\infty(K; F): \> \limsup_{t\to \infty} \frac{\log N_V(t)}{\log t} = n+1  \right\}
\]
is \emph{Baire typical} in $L^\infty(K; F)$ (i.e.~is a dense $G_\delta$ subset).
\end{theorem}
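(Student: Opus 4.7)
The plan is to adapt the plurisubharmonic function and Baire category strategy of Christiansen, with Theorem~\ref{asymp.thm} supplying the essential nontrivial example. Since $K$ has non-empty interior, it contains a closed geodesic ball, so after translation Theorem~\ref{asymp.thm} will provide a radial step potential $V^* \in L^\infty(K; F)$ satisfying $N_{V^*}(t) \sim A_n(r_0) t^{n+1}$; in particular, the target set $\mathcal{G} := \{V : \limsup_t \log N_V(t)/\log t = n+1\}$ is nonempty.

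The next step is a holomorphic/plurisubharmonic dichotomy. For any $V_0, V_1 \in L^\infty(K; F)$, the family $V(z) := V_0 + zV_1$ depends holomorphically on $z \in \bbC$, and an inspection of the Fredholm continuation in \S\ref{potsc.sec} shows that $\tau_{V(z)}(s)$ is jointly meromorphic in $(z,s)$. The proof of Proposition~\ref{tau.upper} provides constants uniform in $V$ over bounded sets in $L^\infty$, so
\[
\Phi(z, s) := \log\abs{\tau_{V(z)}(\nh + s)}
\]
is plurisubharmonic (away from the polar locus, where it extends by standard subharmonic extension) and satisfies $\Phi(z,s) \le C(\abs{z}) \brak{s}^{n+1}$. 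Combined with Proposition~\ref{count.prop}, this makes
\[
z \mapsto \limsup_{a \to \infty} \frac{\log \tN_{V(z)}(a)}{\log a}
\]
the $\limsup$ of a locally uniformly bounded family of subharmonic functions of $z$. A standard Hartogs-type argument, combined with the universal upper bound of Theorem~\ref{upper.thm}, then yields the dichotomy: either this quantity is $<n+1$ for every $z$, or the set where it is strictly less than $n+1$ is pluripolar in $\bbC$, hence of Lebesgue measure zero. Applying this with $V_0 = V^*$ puts us in the second case, so $V^* + zV_1 \in \mathcal{G}$ for a dense set of $z$; letting $V_1$ range over a countable dense family in $L^\infty(K; F)$ proves density of $\mathcal{G}$.

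Finally, to upgrade density to a dense $G_\delta$, I would write
\[
\mathcal{G} = \bigcap_{m,T \in \bbN} U_{m,T},\qquad U_{m,T} := \bigl\{V : \tN_V(a) > a^{n+1-1/m} \text{ for some } a > T\bigr\}.
\]
Using the integrated counting function $\tN_V$ is convenient because Proposition~\ref{count.prop} expresses $\tN_V(a)$, modulo an explicit $a$-dependent term and an $O(a^n)$ remainder, as $\frac{n+1}{2\pi}\int_{-\pi/2}^{\pi/2} \log\abs{\tau_V(\nh + ae^{i\theta})}\,d\theta$, which depends continuously on $V \in L^\infty(K;F)$ for each fixed $a$ avoiding poles of $\tau_V$ on the corresponding half-circle. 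Hence each $U_{m,T}$ is open, and density was established in the previous step. The main obstacle in this program is verifying the plurisubharmonicity and Hartogs dichotomy cleanly, in particular ensuring that the Hadamard factorization machinery of \S\ref{potsc.sec} behaves well uniformly in the parameter $z$; this is carried out exactly as in \cite{Christ:2005, Christ:2012}, substituting our bounds from Theorem~\ref{upper.thm} and Proposition~\ref{tau.upper} for their Euclidean counterparts.
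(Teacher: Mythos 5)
Your strategy matches the paper's: use Theorem~\ref{asymp.thm} to exhibit an extremal radial potential supported in a ball inside $K$, set up a holomorphic (here affine) family of potentials, invoke a plurisubharmonic dichotomy \`a la Christiansen to show that the maximal-growth condition propagates off a pluripolar set, and conclude density; then upgrade to a dense $G_\delta$. The paper's own proof is at a comparable level of detail, deferring the pluripolar dichotomy and the Baire category bookkeeping to \cite{Christ:2005}, \cite{CH:2005} and \cite{BCHP}; you sketch somewhat more of the $G_\delta$ step.

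Two technical points are worth flagging. First, the assertion that $\Phi(z,s)=\log|\tau_{V(z)}(\nh+s)|$ ``extends by standard subharmonic extension'' across the poles of $\tau_{V(z)}$ is not correct as stated: at a pole $\log|\tau|$ tends to $+\infty$, and $[-\infty,+\infty)$-valued plurisubharmonic functions cannot be patched across such singularities. When $n$ is odd, $\tau_{V_z}(s)$ has the $V$-independent poles at $n-\calR_0$ on the right half-plane, and $V$-dependent poles at the finitely many discrete eigenvalues; these must be killed before invoking plurisubharmonicity. This is precisely the purpose of the auxiliary Hadamard product $g_0(s)$ introduced in the proof of Theorem~\ref{order.thm} and the replacement $g(z,s):=\tau_{V_z}(s)\,g_0(s)$; you should do the same here rather than appeal to ``subharmonic extension.'' Second, the openness of $U_{m,T}$ does not follow directly from ``continuity of the integral term'' in Proposition~\ref{count.prop}, because the $O(a^n)$ remainder there is $V$-dependent. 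The cleaner route is to note that $N_V(t)$ is locally constant in $V$ for any $t$ avoiding $\{|\zeta-\nh|:\zeta\in\calR_V\}$, which is measure zero in $t$, and then dominated convergence (using the uniform bound $N_V(t)=O(t^{n+1})$ of Theorem~\ref{upper.thm}) gives continuity of $a\mapsto\tN_V(a)$ in $V$ directly, from which the openness of $U_{m,T}$ follows. With these two adjustments your argument is sound and coincides with the paper's route.
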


\begin{proof}
This is the hyperbolic analog of the main result from Christiansen-Hislop \cite{CH:2005}.  Those techniques were already adapted to
hyperbolic manifolds in \cite{BCHP}, so we will not repeat all the details here.  Suppose $V_z \in L^\infty(K; F)$ is a holomorphic family of potentials for $z\in \Omega \subset \bbC^{p}$, an open connected set.  With a relatively simple adaption of the proof of \cite[Thm.~1.1]{Christ:2005}, similar to the version given in
\cite[\S5.2]{BCHP}, we can show that if $\limsup_{t\to \infty} \log N_{V_z}(t)/\log t = n+1$ holds for some particular $z_0 \in \Omega$, then it holds for $z \in \Omega - E$, where $E$ is a pluripolar set.  
Given any $V \in L^\infty(K; \bbC)$ we can choose $V_0$ a radial potential supported in $K$ to which Theorem~\ref{asymp.thm} applies and form the family $V_z := (1-z)V_0 + zV$.
In this way we conclude that $N_{V_z}(t)$ has maximal rate of growth except for $z$ in some pluripolar set.  Then one can argue exactly as in Christiansen-Hislop \cite{CH:2005} or \cite[\S5.3]{BCHP} to characterize the class of $V$ with maximal growth rate as Baire typical.
\end{proof}

\bigbreak
Following Christiansen \cite{Christ:2012}, we can prove a variant of Theorem~\ref{Kgen.thm} involving the sharp asymptotic constant rather than just the order of growth, at the cost of restricting the supporting set $K$ to a closed ball $\overline{B}(r_0) \subset \bbH^{n+1}$.   
\begin{theorem}\label{order.thm}
Suppose $V_z(x)\in L^\infty(\bbH^{n+1},\bbC)$ is a holomorphic family of potentials for $z\in \Omega\subset \bbC^p$, an open connected subset.  
Assume that $\supp V_z \subset \overline{B}(r_0)$ for all $z$, and that the condition,
\begin{equation}\label{limsup.tN}
\limsup_{a\to\infty} \frac{\tN_{V_z}(a)}{a^{n+1}} = A_n(r_0),
\end{equation}
holds for some $z_0 \in \Omega$, where $A_n(r_0)$ is the asymptotic constant defined in \eqref{An.def}.
Then there exists a pluripolar set $E \subset \Omega$ such that \eqref{limsup.tN} holds for $z \in \Omega-E$.
\end{theorem}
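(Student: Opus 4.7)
The plan is to translate the limsup condition \eqref{limsup.tN} into a statement about plurisubharmonic (PSH) averages of $\log|\tau_{V_z}|$ in the parameter $z$, and then combine the maximum principle with the Brelot-Cartan fact that the exceptional set where a pointwise limsup of PSH functions disagrees with its upper-semicontinuous regularization is pluripolar.  This follows the template laid down by Christiansen \cite{Christ:2012} in the Euclidean setting.

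Since $z \mapsto V_z$ is locally bounded in $L^\infty$ on $\Omega$ (by holomorphy) and the supports all lie in the common ball $\overline{B}(r_0)$, the constants in Propositions~\ref{count.prop}, \ref{sigma.bound}, and \ref{tau.upper} may be chosen uniformly in $z$ on compact subsets of $\Omega$. Set
\[
c_0 := \frac{n+1}{2\pi}\int_{-\frac{\pi}2}^{\frac{\pi}2} h_{r_0}(\theta)\>d\theta,
\qquad
u_a(z) := \frac{1}{a^{n+1}}\cdot \frac{n+1}{2\pi}\int_{-\frac{\pi}2}^{\frac{\pi}2} \log|\tau_{V_z}(\nh+ae^{i\theta})|\>d\theta.
\]
Proposition~\ref{count.prop} then rewrites the condition \eqref{limsup.tN} as $\limsup_{a\to\infty} u_a(z) = c_0$, while Proposition~\ref{tau.upper} yields the uniform upper bound $u_a(z) \le c_0 + O(a^{-1}\log a)$ on compact subsets.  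For each fixed $a$ and $\theta$, the map $z \mapsto \log|\tau_{V_z}(\nh+ae^{i\theta})|$ is PSH on $\Omega$ because $\tau_{V_z}(s)$ is holomorphic in $z$; averaging in $\theta$ preserves PSH, so each $u_a$ is PSH on $\Omega$.

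For each integer $N \ge 1$, let $w_N(z) := \sup_{a \ge N} u_a(z)$.  This is locally bounded above, and its upper-semicontinuous regularization $w_N^*$ is PSH on $\Omega$, with $\{w_N < w_N^*\}$ pluripolar.  The sequence $(w_N^*)$ is decreasing with $\limsup_N w_N^* \le c_0$, so $W(z) := \lim_N w_N^*(z)$ is either identically $-\infty$ or PSH on $\Omega$ with $W \le c_0$.  Off the pluripolar set $E := \bigcup_N \{w_N < w_N^*\}$ one has
\[
\limsup_{a \to \infty} u_a(z) \;=\; \lim_N w_N(z) \;=\; \lim_N w_N^*(z) \;=\; W(z).
\]
At $z_0$ the hypothesis gives $W(z_0) = c_0$, so in particular $W \not\equiv -\infty$. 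Since $\Omega$ is connected and the PSH function $W$ attains its maximum value $c_0$ at an interior point, the maximum principle forces $W \equiv c_0$ on $\Omega$.  Therefore $\limsup_{a\to\infty} u_a(z) = c_0$ for every $z \in \Omega \setminus E$, proving the theorem.

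The principal obstacle is the pluripotential bookkeeping of Step~3: reducing the continuous parameter $a$ to a countable dense subset (permitted because $u_a$ is continuous in $a$, since the logarithmic singularities of $\log|\tau_{V_z}|$ along zero/pole loci are absorbed by the $\theta$-integration) and verifying that $E$ is genuinely pluripolar rather than merely of Lebesgue measure zero.  The remaining work is careful tracking of the uniform-in-$z$ dependence of the error terms in the three input propositions, which follows from the uniform support hypothesis and the local boundedness of $\|V_z\|_\infty$ in $z$.
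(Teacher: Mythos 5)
Your proposal follows essentially the same strategy as the paper: use Proposition~\ref{count.prop} and Proposition~\ref{tau.upper} to convert the condition \eqref{limsup.tN} into a statement about the $\limsup$ of a family of plurisubharmonic $\theta$-averages of $\log\abs{\tau_{V_z}}$, then propagate the extremal value from $z_0$ to $\Omega$ minus a pluripolar set. Your Brelot--Cartan regularization plus maximum-principle argument is a hands-on version of the paper's appeal to Lelong--Gruman \cite[Prop.~1.39]{LelongGruman}, and that part of the reasoning is sound (including the observation that it does not matter whether $z_0$ itself lies in the exceptional set).

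The one step that is not justified as written is the assertion that $z\mapsto \log\abs{\tau_{V_z}(\nh+ae^{i\theta})}$ is plurisubharmonic ``because $\tau_{V_z}(s)$ is holomorphic in $z$.'' It is not holomorphic; it is meromorphic, and for $n$ odd it has infinitely many poles in $\{\re s>\nh\}$ at the fixed set $n-\calR_0$ (lying on the positive real axis, hence on your contours at $\theta=0$ for $a\in\nh+\bbN_0$), as well as finitely many $z$-dependent poles at the reflected eigenvalue points in $(\nh,n)$. The log-modulus of a meromorphic family is a difference of plurisubharmonic functions, not a plurisubharmonic one, so the claim needs an argument. This is precisely why the paper introduces the auxiliary Hadamard product $g_0(s)$ over $(n-\calR_0)\cup e^{i\pi/(n+1)}\calR_0$: the product $g(z,s)=\tau_{V_z}(s)g_0(s)$ is (after removing finitely many eigenvalue poles, or restricting to $a$ large) genuinely holomorphic in $z$ on the contour, $\Psi_1(z,a)$ is then plurisubharmonic, and the correction term $\Psi_2(a)$ has a limit after division by $a^{n+1}$ because $g_0$ has a regularly distributed zero set in Levin's sense. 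Your version is repairable without $g_0$ by noting that the offending poles at $n-\calR_0$ are independent of $z$, so for $a$ large (so that the contour avoids the bounded set of eigenvalue poles) your $u_a$ differs from a plurisubharmonic function by a $z$-independent finite constant and is therefore still plurisubharmonic; but this observation has to be made explicitly, and you should also confirm (as you flag but do not carry out) the reduction of the continuous parameter $a$ to a form in which the negligible-set theorem applies, which is exactly the service the Lelong--Gruman proposition provides.
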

\begin{proof}
The proof is closely related to the proof of \cite[Thm.~1.2]{Christ:2012}.  The only complication is that in our case $\tau_{V_z}(s) := \det S_{V_z}(s) S_0(n-s)$ has infinitely many poles on the positive real axis for $n$ odd, at the points $n-\calR_0$.  To handle this, we introduce a function $g_0$ defined as a Hadamard product
\[
g_0(s) := \prod_{\zeta \in (n-\calR_0) \cup e^{i\pi/(n+1)} \calR_0} E\left(\frac{z}{\zeta};\> n+1\right),
\]
where $E(z;p)$ denotes an elementary factor.  The extra zeros at $e^{i\pi/(n+1)} \calR_0$ are inserted so that $g_0(s)$ will have a regularly distributed zero set in the sense of Levin \cite[\S II.1]{Levin}.  By \cite[Thm~II.2]{Levin}, there is a smooth indicator function $H_0(\theta)$ such that
\begin{equation}\label{g0.ind}
\lim_{a\to\infty} \frac{\log \abs{g_0(\nh+ae^{i\theta})}}{a^{n+1}} = H_0(\theta),
\end{equation}
uniformly for $\theta \in S^1$.  We can then cancel off the extra poles of $\tau_{V_z}(s)$ by introducing
\begin{equation}\label{gzs.def}
g(z,s) := \tau_{V_z}(s) g_0(s),
\end{equation}
whose poles for $\re s\ge \nh$ correspond to the (finitely many) points of $\calR_{V_z}$ on that side.  

By Proposition~\ref{count.prop}, we then have
\[
\tN_{V_z}(a) = \Psi_1(z,a) + \Psi_2(a) + O(a^n),
\]
where 
\[
\Psi_1(z,s) :=  \frac{n+1}{2\pi} \int_{-\frac{\pi}2}^{\frac{\pi}2} \log \abs{g(z, \nh+ae^{i\theta})}\>d\theta,
\]
and 
\[
\Psi_2(a) := A^{(0)}_n a^{n+1} - \frac{n+1}{2\pi} \int_{-\frac{\pi}2}^{\frac{\pi}2} \log \abs{g_0(\nh+ae^{i\theta})}\>d\theta.
\]
By \eqref{g0.ind}, $\lim_{a\to\infty} a^{-(n+1)} \Psi_2(a)$ exists and is given by some constant $c_0<\infty$.
Theorem~\ref{upper.thm} thus gives the bound
\[
\limsup_{a\to\infty} \frac{\Psi_1(z,a)}{a^{n+1}} \le A_n(r_0) - c,
\]
and by hypothesis, the maximum is achieved at $z=z_0$.  
For $a$ sufficiently large, the function $\Psi_1(z,a)$ is plurisubharmonic in $z$, and so by Lelong-Gruman
\cite[Prop.~1.39]{LelongGruman}, there exists a pluripolar set $E \subset \Omega$ such that
\[
\limsup_{a\to\infty} \frac{\Psi_1(z,a)}{a^{n+1}} = A_n(r_0) - c,
\]  
for $z \in \Omega-E$.  This proves the claim.
\end{proof}

Note that as in Theorem~\ref{Kgen.thm}, we could use Theorem~\ref{order.thm} in conjunction with perturbation by radial potentials to show that the condition \eqref{limsup.tN} holds on a Baire typical subset of $L^\infty(\overline{B}(r_0), F)$.

\bigbreak
\subsection{Distribution in sectors}

Many of the results of Christiansen \cite{Christ:2012} concern the resonance counting function restricted to sectors.
In the hyperbolic case it is natural to center these sectors at $s = \nh$ and define:
\[
N_V(t,\theta_1,\theta_2) := \#\Bigl\{\zeta\in\calR_V:\>0<\abs{\zeta-\nh}\le t, \>\arg(\zeta-\nh) \in [\theta_1,\theta_2]\Bigr\}.
\]
The ``averaged'' sectorial counting function is denoted with a tilde:
\[
\tN_V(a,\theta_1,\theta_2) := (n+1) \int_0^a \frac{N_V(t,\theta_1,\theta_2)}{t} \>dt.
\]
The results below refer to the indicator function $h_{r_0}(\theta)$, defined in \eqref{indicator.def} and illustrated in Figure~\ref{indicator.fig}.
\begin{figure}
\begin{center}
\begin{overpic}{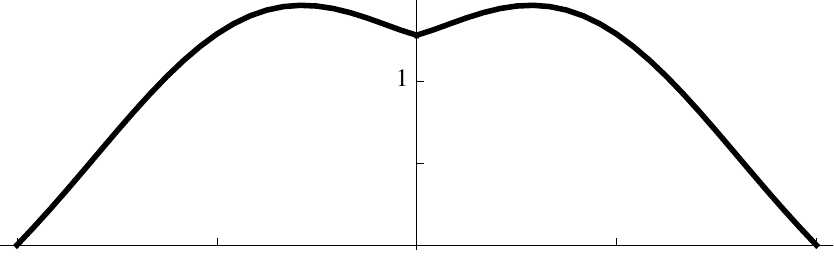}
\put(-4,0){$-\pi/2$}
\put(95,0){$\pi/2$}
\put(80,26){$h_1(\theta)$ for $n=2$}
\end{overpic}
\end{center}
\caption{The indicator function.}\label{indicator.fig}
\end{figure}

\begin{theorem}\label{Msector.thm}
Suppose $V\in L^\infty(\overline{B}(r_0),\bbC)$ has counting function satisfying $N_V(t) \sim A_n(r_0)t^{n+1}$.  
Then for $\tfrac{\pi}2 \le \theta_1 < \theta_2 \le \tfrac{3\pi}2$, with $\theta_i \ne \pi$,
\[
\begin{split}
N_V(t,\theta_1, \theta_2) &= N_0(t,\theta_1, \theta_2) 
+ \frac{(n+1) t^{n+1}}{2\pi} \int_{\theta_1-\pi}^{\theta_2-\pi} h_{r_0}(\omega)\>d\omega\\
&\qquad + \frac{t^{n+1}}{2\pi(n+1)}
\begin{cases} h_{r_0}'(\theta_2-\pi)&\text{if }  \theta_2 < \frac{3\pi}2 \\ 0 &\text{if } \theta_2 = \frac{3\pi}2 \end{cases} \\
&\qquad - \frac{t^{n+1}}{2\pi(n+1)}
\begin{cases} h_{r_0}'(\theta_1-\pi)&\text{if }  \theta_1 > \frac{\pi}2 \\ 0 &\text{if }  \theta_1 = \frac{\pi}2 \end{cases} \\
&\qquad + o(t^{n+1}).
\end{split}
\]
Note that $N_0$ contributes only if $n$ is odd, and then only if $\pi \in (\theta_1, \theta_2)$.
\end{theorem}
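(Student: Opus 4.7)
The plan is to interpret the sectorial resonance count as a sectorial zero-counting problem for an entire function of completely regular growth, and then invoke the Levin-Pfluger angular density formula. By the reflection identity $\tau(n-s)\tau(s)=1$, the poles of $\tau$ at $\calR_V$ become the zeros of $\tau(\nh - z)$ at $z = \calR_V - \nh$. Following the device from the proof of Theorem~\ref{order.thm}, I would multiply by the auxiliary Hadamard product $g_0(\nh - z)$: this cancels the poles of $\tau(\nh - z)$ at $\calR_0 - \nh$ on the negative real axis, while introducing auxiliary zeros at $\nh - e^{i\pi/(n+1)}\calR_0$ which all lie in the first quadrant and hence outside the sector $[\pi/2, 3\pi/2]$ of interest. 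After absorbing the finitely many poles coming from discrete eigenvalues into a polynomial factor, the result is an entire function $F(z)$ of order $n+1$ whose zeros in the sector $[\theta_1, \theta_2] \subset [\pi/2, 3\pi/2]$ are exactly the reflected resonances, so that $n_F(t, \theta_1, \theta_2) = N_V(t, \theta_1, \theta_2)$.

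Next I would pin down the indicator of $F$ on the sector by combining Theorem~\ref{tau.asymp} with \eqref{g0.ind}: the reflection identity gives $\log|\tau(\nh - ae^{i\theta})| = \log|\tau(\nh + ae^{i(\theta-\pi)})| \sim h_{r_0}(\theta - \pi) a^{n+1}$ for $\theta \in (\pi/2, 3\pi/2)$, while $g_0(\nh - z)$ has indicator $H_0(\theta - \pi)$ by \eqref{g0.ind} (using $2\pi$-periodicity of the circle). Thus $H_F(\theta) = h_{r_0}(\theta - \pi) + H_0(\theta - \pi)$ on $[\pi/2, 3\pi/2]$. The hypothesis $N_V(t) \sim A_n(r_0) t^{n+1}$ saturates the upper bound of Theorem~\ref{upper.thm}, and together with Proposition~\ref{count.prop} this forces $F$ to be of completely regular growth in the Levin-Pfluger sense, i.e.~pointwise convergence of $a^{-(n+1)}\log|F(ae^{i\theta})|$ to $H_F(\theta)$ outside a $C_0$-exceptional set of directions.

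With completely regular growth in hand, apply the Levin-Pfluger angular density theorem (see Levin, \emph{Distribution of Zeros of Entire Functions}, Theorem~III.3):
\[
\lim_{t\to\infty} \frac{n_F(t, \theta_1, \theta_2)}{t^{n+1}} = \frac{1}{2\pi(n+1)}\bigl[H_F'(\theta_2^-) - H_F'(\theta_1^+)\bigr] + \frac{n+1}{2\pi} \int_{\theta_1}^{\theta_2} H_F(\theta)\>d\theta,
\]
valid whenever $\theta_i$ is a continuity point of $H_F'$. The $H_0$-contribution on the right-hand side reproduces exactly $N_0(t, \theta_1, \theta_2)/t^{n+1}$ (apply Levin separately to $g_0(\nh-z)$, whose only zeros in the sector $[\pi/2, 3\pi/2]$ are $\calR_0 - \nh$ on the negative real axis, accounting for the $N_0$ piece when $\pi \in (\theta_1, \theta_2)$), while the $h_{r_0}$-contribution gives the claimed main terms after the change of variables $\omega = \theta - \pi$. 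The endpoint convention in the theorem (where the boundary term is $0$ when $\theta_i \in \{\pi/2, 3\pi/2\}$) reflects the one-sided derivative facing into the sector together with the specific behaviour of $h_{r_0}$ at $\pm\pi/2$ inherited from the definition \eqref{indicator.def}.

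The principal obstacle will be upgrading Theorem~\ref{tau.asymp}'s convergence, which holds only outside neighbourhoods of $\bbZ/2$, to the full Levin-Pfluger condition of completely regular growth for $F$. My approach here will parallel the proof of Theorem~\ref{order.thm}, leveraging the hypothesis $N_V(t) \sim A_n(r_0) t^{n+1}$ together with the Lindel\"of-type lower bound in Lemma~\ref{lind.lemma} to rule out slow-growth anomalies off the exceptional set. A secondary bookkeeping task is to verify carefully that the auxiliary $g_0$-zeros at $\nh - e^{i\pi/(n+1)}\calR_0$ indeed avoid the sector $[\pi/2, 3\pi/2]$ for every admissible $\theta_i$, and that the cancellation between $\tau$'s poles and $g_0$'s zeros on the negative real axis is exact with matching multiplicities.
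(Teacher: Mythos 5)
Your strategy---reflect $\tau$, remove the background singularities, identify the indicator via Theorem~\ref{tau.asymp}, establish completely regular growth, and then read off the sectorial zero count from the Levin--Pfluger angular density theorem---is in the same spirit as the paper's, and you correctly identify the indicator $H_F(\theta)=h_{r_0}(\theta-\pi)+H_0(\theta-\pi)$ and the $g_0$-subtraction that produces the $N_0$ term. But there is a concrete gap right at the heart of the argument: the function $F(z)=\tau(\nh-z)\,g_0(\nh-z)$ is \emph{not} entire, even after absorbing the finitely many eigenvalue poles. Since $\tau(\nh-z)=1/\tau(\nh+z)$, the function $\tau(\nh-z)$ has poles wherever $\tau(\nh+z)$ vanishes, namely at $z\in(\nh-\calR_V)\cup(\calR_0-\nh)$. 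The product $g_0(\nh-z)$ has zeros at $\calR_0-\nh$ (cancelling one family of poles) and at $\nh-e^{i\pi/(n+1)}\calR_0$ (extra zeros in the first quadrant, as you note), but it has \emph{no} zeros at $\nh-\calR_V$. Those are the reflections through $\nh$ of the full resonance set, so they form an \emph{infinite} set accumulating along the whole right half-plane $\re z>0$, not just finitely many points. Consequently $F$ is holomorphic only on $\{\re z\le 0\}$ (minus finitely many points), and Levin's angular density theorem for entire functions of completely regular growth (the formula $\Delta(\theta_1,\theta_2)=\tfrac1{2\pi\rho}[h'(\theta_2)-h'(\theta_1)]+\tfrac{\rho}{2\pi}\int_{\theta_1}^{\theta_2}h$) simply does not apply to it.

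This is exactly the obstacle the paper is built to avoid. Rather than reflecting, the paper works with $g(s)=\tau(s)g_0(s)$ in the right half-plane $\re s\ge\nh$, applies the argument principle to $\tau'/\tau$ over a half-disc sector, and uses the Cauchy--Riemann equations to pass from $\arg\tau$ to $\log|\tau|$. The resulting formula has an explicit contour segment along the critical line $\re s=\nh$, and it is precisely Proposition~\ref{sigma.bound} (the scattering-phase bound $\tau'/\tau(\nh+it)=O(\brak{t}^{n-1})$) that shows this boundary contribution is $O(t^n)$, i.e.~subleading. Completely regular growth of $g$ is then established only in the open angle $(-\tfrac\pi2,\tfrac\pi2)$, and the Levin machinery is invoked at the level of the log-integrals $J_\tau(t,\omega)$ (via \cite[Lemma~III.2]{Levin}), followed by Christiansen's Lemma~5.4 to remove an auxiliary $\eta$-integration and Stefanov's Tauberian lemma to convert $\tN$-asymptotics into $N$-asymptotics. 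If you want to push your approach through, you would either need to replace $F$ by a genuinely entire function with zeros at $\calR_V-\nh$ (say $H_V(\nh+z)$ from the Hadamard factorization) and then independently determine its indicator and regular growth---which is not supplied by Theorem~\ref{tau.asymp} alone, since the factorization \eqref{tau.factor} only relates $H_V(\nh+z)$ and $H_V(\nh-z)$---or else invoke a half-plane (Carleman/Levin Chapter~V) version of the angular density theorem, where the extra boundary integral must be shown to be lower order. The latter route is in effect what the paper's contour argument does by hand.
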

\begin{proof}
It suffices to prove only the case $\theta_1 = \tfrac{\pi}2$, 
\[
N_V(t,\tfrac{\pi}2, \theta) = N_0(t,\tfrac{\pi}2, \theta) + \left[\frac{h_{r_0}'(\theta-\pi)}{2\pi(n+1)}
+\frac{n+1}{2\pi} \int_{-\frac{\pi}2}^{\theta-\pi} h_{r_0}(\omega)\>d\omega\right] t^{n+1} + o(t^{n+1}).
\]
If $V$ is real, then self-adjointness implies a conjugation symmetry in $\calR_V$, and the $\theta_2 = \tfrac{3\pi}2$ case would be 
equivalent to this by reflection.  In the general case, the indicator function $h_{r_0}$
still posseses the conjugation symmetry even though  $\calR_V$ does not, 
so the proof for $\theta_2 = \tfrac{3\pi}2$ is identical to the one we will give for $\theta_1 = \tfrac{\pi}2$.  
The intermediate case $[\theta_1,\theta_2] \subset (\tfrac{\pi}2, \tfrac{3\pi}2)$ follows by subtracting the two endpoint cases.
 
To begin we apply the argument principle to the integral of $\tau'/\tau$ over a sector given by
$\abs{z-\nh} \le t$ and $-\tfrac{\pi}2 \le \arg (z-\nh) \le -\tfrac{\pi}2 + \eta$.  The result is
\[
\begin{split}
& N_V(t,\tfrac{\pi}2, \tfrac{\pi}2+\eta) - N_V(t, -\tfrac{\pi}2, -\tfrac{\pi}2+\eta) - N_0(t,\tfrac{\pi}2, -\tfrac{\pi}2+\eta) \\
&\hskip1in = \frac{1}{2\pi} \int_0^t \del_r \arg \tau(\nh - ir)\>dr 
+  \frac{1}{2\pi} \int_{-\frac{\pi}2}^{-\tfrac{\pi}2+\eta} \del_\omega \arg \tau(\nh + te^{i\omega})\>d\omega \\
&\hskip1in\qquad - \frac{1}{2\pi} \int_0^t \del_r \arg\tau(-ir e^{i\eta})\>dr.
\end{split}
\]
The term $N_V(t, -\tfrac{\pi}2, -\tfrac{\pi}2+\eta)$ is bounded by a fixed constant, the total number of discrete eigenvalues.
And for the first segment of the contour integral, on critical line $\re s = \nh$, 
we have a bound $O(t^n)$ by Proposition~\ref{sigma.bound}.
By applying the Cauchy-Riemann equations to the integrands of the two remaining integrals, we obtain
\[
\begin{split}
N_V(t,\tfrac{\pi}2, \tfrac{\pi}2+\eta) - N_0(t,\tfrac{\pi}2, \tfrac{\pi}2+\eta) 
& =  \frac{1}{2\pi} \int_{-\frac{\pi}2}^{-\tfrac{\pi}2+\eta} t \del_t \log \abs{\tau(\nh + te^{i\omega})} \>d\omega \\
&\qquad + \frac{1}{2\pi} \del_\eta J_\tau(t,-\tfrac{\pi}2+\eta) + O(t^{n}),
\end{split}
\]
where
\[
J_f(t,\omega) := \int_0^t \log \abs{f(\nh + r e^{i\omega})}\>\frac{dr}{r}.
\]
(This is a slight adaptation of the definition from Levin \cite{Levin}, moving the center to $\nh$.)
Now, to eliminate the derivatives, we divide by $t$ and then integrate over $t$ from $0$ to $a$ and over $\eta$ from $0$ to $\theta$.  Note that Proposition~\ref{sigma.bound} implies $J_\tau(t,-\tfrac{\pi}2) = O(t^n)$.  The result of these integrations is therefore that
\begin{equation}\label{tN.int}
\begin{split}
&\int_{0}^{-\frac{\pi}2+\theta} \left[\tN_V(a,\tfrac{\pi}2,  \tfrac{\pi}2+\eta) - \tN_0(a,\tfrac{\pi}2,  \tfrac{\pi}2+\eta) \right]d\eta \\
&\qquad =  \frac{n+1}{2\pi} \int_{0}^{-\frac{\pi}2+\theta} \int_{-\frac{\pi}2}^{-\tfrac{\pi}2+\eta} \log \abs{\tau(\nh + ae^{i\omega})} \>d\omega \>d\eta 
+ \frac{n+1}{2\pi} \int_0^a J_\tau(t, \theta-\pi)\>\frac{dt}{t}\\
&\qquad\qquad + O(t^{n}).
\end{split}
\end{equation}

To continue, we use the background function $g_0(s)$ introduced in the proof of 
Theorem~\ref{order.thm} to define $g(s) := \tau(s) g_0(s)$.  If $\Delta + V$ has discrete eigenvalues, then $g(s)$ will still 
have poles at a finite set $\{\zeta_1, \dots, \zeta_m\} \subset (\nh,n)$.  In this case we can simply replace 
\[
g(s) \leadsto g(s) \prod_{i=1}^m \frac{s-\zeta_i}{s-n+\zeta_i},
\]
which will remove the poles without affecting the asymptotics.  For notational convenience, we will simply assume that $g(s)$ is analytic for the rest of the proof.  

By Proposition~\ref{tau.upper} and \eqref{g0.ind} we have a bound for $\abs{\omega} \le \tfrac{\pi}2$,
\[
\frac{\log \abs{g(\nh+ae^{i\omega})}}{a^{n+1}} \le h_{r_0}(\omega) + H_0(\omega) + o(1),
\]
as $a \to \infty$, where the Maximum Modulus principle is used to remove the restriction on the values of $a$.  Thus
\[
\limsup_{a\to\infty} \frac{\log \abs{g(\nh+ae^{i\omega})}}{a^{n+1}} \le h_{r_0}(\omega) + H_0(\omega),
\]
for $\abs{\omega} \le \tfrac{\pi}2$.  The left-hand side is by definition the indicator function of $g$, so \cite[Thm I.28]{Levin} gives, for any $\vep>0$,
\begin{equation}\label{gind.bd}
\frac{\log \abs{g(\nh+ae^{i\omega})}}{a^{n+1}} \le h_{r_0}(\omega) + H_0(\omega) + \vep,
\end{equation}
for $\omega \le \tfrac{\pi}2$ and $r \ge r_\vep$.

On the other hand, by the assumption on the asymptotics of $N_V(t)$, together with Proposition~\ref{count.prop}, we have
\[
\lim_{a\to\infty} a^{-(n+1)} \int_{-\frac{\pi}2}^{\frac{\pi}2} \log \abs{\tau(\nh+ae^{i\omega})}\>d\omega = 
\int_{-\frac{\pi}2}^{\frac{\pi}2} h_{r_0}(\omega)\>d\omega.
\]
In conjunction with \eqref{gind.bd} this implies
\[
\lim_{a\to\infty}  \int_{-\frac{\pi}2}^{\frac{\pi}2} \abs{h_{r_0}(\omega) + H_0(\omega)- 
\frac{\log \abs{g(\nh+ae^{i\omega})}}{a^{n+1}}} d\omega  =0.
\]
Then, by the final argument from \cite[Thm~IV.3]{Levin} (used also in \cite[Prop.~2.2]{Christ:2012}), $g(\nh + \cdot)$
is of \emph{completely regular growth} in the angle $(-\tfrac{\pi}2, \tfrac{\pi}2)$, with indicator function equal to $h_{r_0} + H_0$.  This means that 
\[
\lim_{a\to\infty} \frac{\log \abs{g(\nh+ae^{i\omega})}}{a^{n+1}} = h_{r_0}(\omega) + H_0(\omega),
\]
uniformly for $\omega \in (-\tfrac{\pi}2, \tfrac{\pi}2)$, for $a$ outside of some subset of zero relative measure in $[0,\infty)$.
By \cite[Lemma~III.2]{Levin}, we have 
\[
\lim_{a\to \infty} (n+1) a^{-(n+1)} J_g(t,\omega) = h_{r_0}(\omega) + H_0(\omega),
\]
for $\abs{\omega} \le \tfrac{\pi}2$.  The corresponding limit holds also for $J_{g_0}$, since $g_0$ has completely regular growth by construction.  Thus
\begin{equation}\label{J.tau.lim}
\lim_{a\to \infty} (n+1) a^{-(n+1)} J_\tau(t,\omega) = h_{r_0}(\omega),
\end{equation}
for $\abs{\omega} \le \tfrac{\pi}2$.  

Returning to \eqref{tN.int}, we can apply \eqref{J.tau.lim} to obtain
\[
\begin{split}
&\lim_{a\to \infty} a^{-(n+1)} \int_{0}^{-\frac{\pi}2+\theta} 
\left[\tN_V(a,\tfrac{\pi}2,  \tfrac{\pi}2+\eta) - \tN_0(a,\tfrac{\pi}2,  \tfrac{\pi}2+\eta) \right]d\eta \\
&\qquad =  \frac{n+1}{2\pi} \int_{0}^{-\frac{\pi}2+\theta} \int_{-\frac{\pi}2}^{-\tfrac{\pi}2+\eta} h_{r_0}(\omega) \>d\omega \>d\eta 
+ \frac{1}{2\pi (n+1)} h_{r_0}(\theta-\pi).
\end{split}
\]
The integral over $\eta$ can be removed using \cite[Lemma~5.4]{Christ:2012}, provided we avoid the point $\theta=\pi$ where 
$h_{r_0}$ fails to be differentiable.  
We then complete the proof using the fact that the asymptotic $\tN_*(a) \sim ca^{n+1}$ is equivalent to $N_*(t) \sim ct^{n+1}$ (see, e.g,. \cite[Lemma~1]{Stefanov:2006}).
\end{proof}

\medbreak
One interesting feature of Theorem~\ref{Msector.thm} is that the indicator function remains the same whether $\Delta+V$ is self-adjoint or not.  Even though the conjugation symmetry is broken in the non-self-adjoint case, the resonance distribution still exhibits this symmetry in an asymptotic sense.
This situation is illustrated in Figure~\ref{plot1I.fig}, which shows the resonance plots for real and imaginary radial step potentials in $\bbH^{2}$.  The breaking of the conjugation symmetry is clear only in the vicinity of the origin.

\begin{figure}
\begin{center}
\begin{overpic}{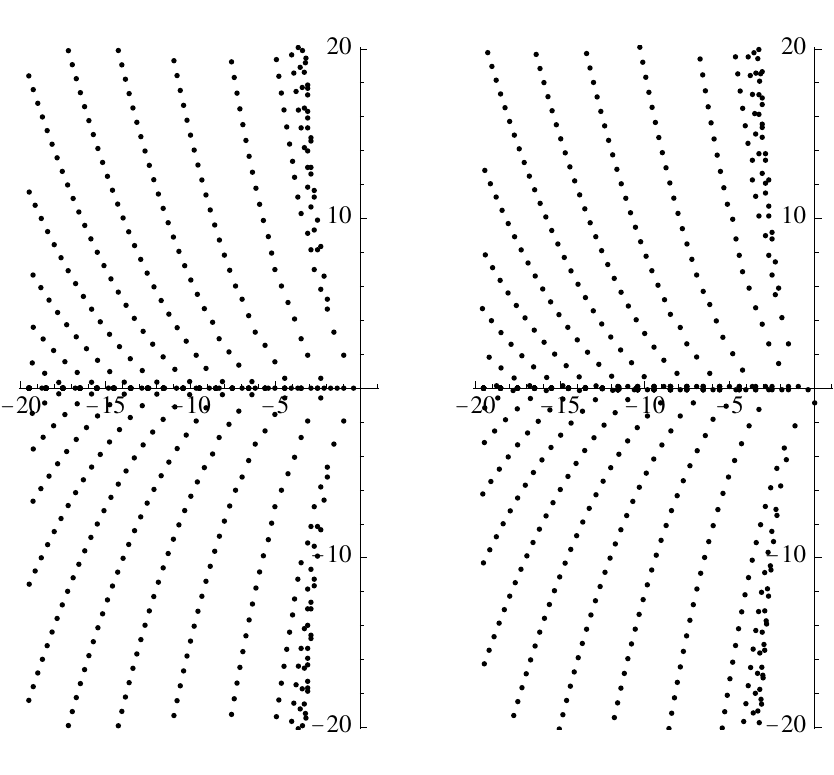}
\put(15,0){$V = \chi_{B(1)}$}
\put(70,0){$V = i\chi_{B(1)}$}
\end{overpic}
\end{center}
\caption{Comparison of resonance distributions for real and imaginary step potentials in $\bbH^{2}$.}\label{plot1I.fig}
\end{figure}

\bigbreak
\subsection{Sectorial asymptotics for generic potentials}

Finally, and once again following \cite{Christ:2012}, we present some results on distribution of resonances in sectors that hold in a generic sense.  The first result concerns the resonances in a narrow sector bordering on the critical line $\re s = \nh$, as shown in Fig~\ref{sector.fig}.  The theorem gives a lower bound independent of $\vep$ on the number of resonances in this strip that is independent of $\vep$.  As in the remark following Theorem~\ref{order.thm}, we could  use perturbation by radial potentials to show that this condition holds for a Baire typical subset of  $L^\infty(\overline{B}(r_0), F)$.  
\begin{figure}
\begin{center}
\begin{overpic}[scale=.7]{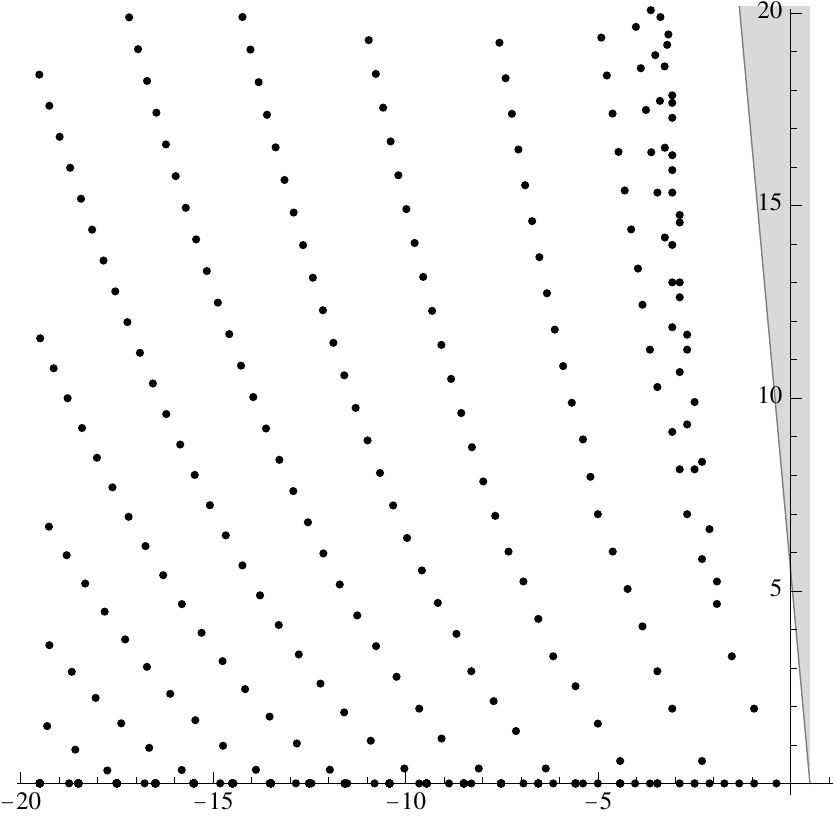}
\end{overpic}
\end{center}
\caption{An example of the type of sector to which Theorem~\ref{Ngeneric} applies.  For generic potentials, the resonance count in such a sector satisfies $N_V(t;\tfrac{\pi}2-\vep,\tfrac{\pi}2) \asymp t^{n+1}$, with constants independent of $\vep$.}\label{sector.fig}
\end{figure}

\begin{theorem}\label{Ngeneric}
Suppose $V_z(x)\in L^\infty(\bbH^{n+1},\bbC)$ is a holomorphic family of potentials for $z\in \Omega\subset \bbC^p$, an open connected subset.  
Assume that $\supp V_z \subset \overline{B}(r_0)$ for all $z$, and that the condition,
\begin{equation}\label{limsup.tN2}
\limsup_{a\to\infty} \frac{\tN_{V_z}(a)}{a^{n+1}} = A_n(r_0),
\end{equation}
holds for some $z_0 \in \Omega$, where $A_n(r_0)$ is the asymptotic constant defined in \eqref{An.def}.
Then for any $0<\vep<\tfrac{\pi}2$, there exists a pluripolar set $E_\vep$ such that
\[
\limsup_{t\to\infty} \frac{\tN_{V_z}(a,\tfrac{\pi}2, \tfrac{\pi}2+\vep)}{a^{n+1}} 
\ge  \frac{1}{4\pi (n+1)} h_{r_0}'(-\tfrac{\pi}2+)  \quad\text{for }z\in \Omega-E_\vep.
\]
The corresponding result holds in the conjugate sector also, i.e.~for $\tN_{V_z}(a,\tfrac{3\pi}2-\vep, \tfrac{3\pi}2)$.
\end{theorem}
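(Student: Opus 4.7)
My plan follows Christiansen's framework used in the proof of Theorem~\ref{order.thm}, with the global scattering-determinant integral replaced by a sectorial one and the resulting constant identified via the small-$\vep$ behavior of the indicator function.  First I construct the analytic function $g(z, s) := \tau_{V_z}(s) g_0(s)$ from the proof of Theorem~\ref{order.thm}, absorbing the finitely many discrete eigenvalue poles so that $z\mapsto\log\abs{g(z, \nh+ae^{i\omega})}$ is plurisubharmonic on $\Omega$ for each fixed $(a,\omega)$.  The test functional I use is the doubly averaged sectorial $\log\abs{g}$,
\[
\Psi_\vep(z,a) := \frac{1}{a^{n+1}} \int_0^\vep \int_{-\pi/2}^{-\pi/2+\eta} \log\abs{g(z, \nh+ae^{i\omega})}\,d\omega\,d\eta,
\]
which is plurisubharmonic in $z$ for each $(a,\vep)$.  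Proposition~\ref{tau.upper} together with the indicator of $g_0$ from \eqref{g0.ind} yields the uniform upper bound $\limsup_a \Psi_\vep(z,a) \le M_\vep := \int_0^\vep \int_{-\pi/2}^{-\pi/2+\eta}[h_{r_0}(\omega)+H_0(\omega)]\,d\omega\,d\eta$.

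At the distinguished point $z_0$, the hypothesis \eqref{limsup.tN2} combined with Proposition~\ref{count.prop} forces
\[
\frac{1}{a_k^{n+1}}\int_{-\pi/2}^{\pi/2}\log\abs{\tau_{V_{z_0}}(\nh+a_k e^{i\omega})}\,d\omega \longrightarrow \int_{-\pi/2}^{\pi/2} h_{r_0}(\omega)\,d\omega
\]
along some subsequence $a_k\to\infty$.  Since the pointwise envelope from Proposition~\ref{tau.upper} dominates the integrand, a standard Fatou-type argument (the integral approaches its pointwise upper bound, hence the deviation vanishes in $L^1$) upgrades this to $L^1$-convergence of $a_k^{-(n+1)}\log\abs{\tau_{V_{z_0}}(\nh+a_k e^{i\cdot})}$ to $h_{r_0}$ on $[-\pi/2,\pi/2]$, yielding $\Psi_\vep(z_0,a_k)\to M_\vep$.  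I then apply \cite[Prop.~1.39]{LelongGruman} in the same way as in the proof of Theorem~\ref{order.thm} to produce a pluripolar set $E_\vep\subset\Omega$ on whose complement $\limsup_a \Psi_\vep(z,a) = M_\vep$.

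The final step converts this plurisubharmonic saturation into a lower bound on the sectorial count.  Integrating the argument-principle identity from the proof of Theorem~\ref{Msector.thm} over $\eta\in[0,\vep]$ (this identity is valid for arbitrary $V$, after absorbing the $J_\tau(a,-\pi/2)$ contribution into $O(a^n)$ via Proposition~\ref{sigma.bound}) yields
\[
\int_0^\vep \tN_{V_z}(a,\tfrac{\pi}{2},\tfrac{\pi}{2}+\eta)\,d\eta = \frac{n+1}{2\pi}\int_0^\vep\!\int_{-\pi/2}^{-\pi/2+\eta}\!\log\abs{\tau_{V_z}(\nh+ae^{i\omega})}\,d\omega\,d\eta + \frac{n+1}{2\pi}J_{\tau_{V_z}}(a,-\tfrac{\pi}{2}+\vep) + O(a^n).
\]
On $\Omega-E_\vep$, the first term on the right attains $M_\vep \cdot a^{n+1}$ (after subtracting the $g_0$ contribution).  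Combining this with the monotonicity inequality $\int_0^\vep \tN_{V_z}(a,\pi/2,\pi/2+\eta)\,d\eta \le \vep\, \tN_{V_z}(a,\pi/2,\pi/2+\vep)$ and the small-$\vep$ expansion $M_\vep = \vep\,h_{r_0}'(-\pi/2+)/(2\pi(n+1)) + O(\vep^2)$ (using $h_{r_0}(-\pi/2)=0$ and $h_{r_0}'(-\pi/2+)>0$, so that the dominant contribution is linear in $\vep$) yields the stated lower bound.  The conjugate-sector statement follows identically after replacing $\omega\in[-\pi/2,-\pi/2+\vep]$ by $\omega\in[\pi/2-\vep,\pi/2]$, using that $h_{r_0}$ is symmetric under $\omega\mapsto -\omega$ even when $V_z$ is complex.

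The principal obstacle is the radial term $J_{\tau_{V_z}}(a,-\pi/2+\vep)$ appearing in the Msector identity: the plurisubharmonic functional $\Psi_\vep$ controls $\log\abs{\tau}$ only on the outer circle $\abs{s-\nh}=a$, whereas $J_\tau$ integrates $\log\abs{\tau}$ along a full radial ray from $\nh$.  My plan for handling this is to use the monotonicity of $\tN_{V_z}$ together with the universal upper bound of Theorem~\ref{upper.thm} to propagate saturation from radius $a_k$ to a neighborhood of radii, giving enough control over $J_\tau$ to complete the estimate; this somewhat conservative strategy is what accounts for the factor $1/2$ in the stated constant $h_{r_0}'(-\pi/2+)/(4\pi(n+1))$ relative to the Msector-sharp value $h_{r_0}'(-\pi/2+)/(2\pi(n+1))$.
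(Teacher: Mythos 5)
Your overall framework (using $g = \tau g_0$, the plurisubharmonic test functional, the Lelong--Gruman [Prop.~1.39] argument, the argument-principle identity from the proof of Theorem~\ref{Msector.thm}, and the small-$\vep$ expansion of $h_{r_0}$) is the right one and matches the paper's approach.  But there is a genuine gap, and you have in fact identified it yourself without seeing the fix: your test functional $\Psi_\vep(z,a)$ contains only the double angular integral $\int_0^\vep\int_{-\pi/2}^{-\pi/2+\eta}\log|g|\,d\omega\,d\eta$, while the argument-principle identity also produces the radial term $J_{g(z,\cdot)}(a,\theta-\pi)$, and you then try to control that term \emph{separately}.  The key observation you are missing is that $J_{g(z,\cdot)}(a,\theta-\pi) = \int_0^a\log|g(z,\nh+re^{i(\theta-\pi)})|\,dr/r$ is itself an integral of $\log|g(z,\cdot)|$ over a fixed set in $s$, hence is also plurisubharmonic in $z$ for each fixed $(a,\theta)$.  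The paper therefore defines the test functional to be the \emph{sum} of the double angular integral and the radial $J_g$ term,
\[
\Psi(z,a,\theta) := \frac{n+1}{2\pi}\int_0^{-\frac{\pi}{2}+\theta}\!\!\int_{-\frac{\pi}{2}}^{-\frac{\pi}{2}+\eta}\log\abs{g(z,\nh+ae^{i\omega})}\,d\omega\,d\eta + \frac{n+1}{2\pi}\int_0^a J_{g(z,\cdot)}(t,\theta-\pi)\,\frac{dt}{t},
\]
and then [Prop.~1.39] of Lelong--Gruman applies directly to the whole expression.  No separate control of $J_\tau$ is needed.

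Your plan to propagate saturation from a subsequence $a_k$ along the full ray $\{ \nh + re^{i(\theta-\pi)}: 0\le r\le a\}$ using only the $O(a^{n+1})$ upper bound of Theorem~\ref{upper.thm} and monotonicity of $\tN$ will not work: the upper bound gives no lower bound on $\log|\tau|$ at intermediate radii, and the hypothesis \eqref{limsup.tN2} is only a $\limsup$ statement that says nothing about the values of $\log|\tau|$ between the radii $a_k$.  Consequently, the factor $1/2$ in the constant $\tfrac{1}{4\pi(n+1)}h_{r_0}'(-\tfrac{\pi}{2}+)$ does \emph{not} come from a conservative estimate of the radial term, as you claim.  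It comes from the limiting procedure $\theta\to\tfrac{\pi}{2}+$ in Christiansen's argument (\cite[Thm.~1.2]{Christ:2012}): the Lelong--Gruman step yields only a $\limsup_a$ identity for each fixed $\theta>\tfrac{\pi}{2}$, and the passage from the $\eta$-averaged $\tN$ to the pointwise $\tN$, combined with taking $\theta\downarrow\tfrac{\pi}{2}$, is what introduces the loss of a factor of $2$ relative to the sharp constant $\tfrac{1}{2\pi(n+1)}h_{r_0}'(-\tfrac{\pi}{2}+)$ appearing in Theorem~\ref{Msector.thm}.

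Two smaller remarks.  Your Fatou-type argument for $L^1$-convergence of $a_k^{-(n+1)}\log|\tau_{V_{z_0}}(\nh+a_k e^{i\cdot})|$ to $h_{r_0}$ is sound in principle (pointwise domination plus convergence of the integrals gives $L^1$ convergence) and is consistent with the complete regular growth argument the paper uses in the proof of Theorem~\ref{Msector.thm}.  And your final claim that the conjugate-sector result follows by the $\omega\mapsto-\omega$ symmetry of $h_{r_0}$ is correct and agrees with the remark in the proof of Theorem~\ref{Msector.thm}.
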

\begin{proof}
Let us denote by $\tau_{V_z}$ the relative scattering determinant associated to $V_z$.  As in the proof of Theorem~\ref{Kgen.thm}
we introduce the background function $g_0(s)$ to cancel the poles of $\tau_{V_z}(s)$ coming from $\calR_0$ (necessary only if $n$ is odd).
Then $g(z,s) := \tau_{V_z}(s) g_0(s)$ is analytic for $\re s \ge \nh$, except for finitely many poles related to the discrete spectrum.  
As in the proof of Theorem~\ref{Msector.thm}, we note that these poles are easily cancelled off (see \cite[Lemma~5.2]{Christ:2012} for details),
and for the sake of exposition we assume that $g(z,s)$ is analytic for the rest of the proof.

From \eqref{tN.int}, assuming $\theta\in (\tfrac{\pi}2,\pi)$ we have
\[
\int_{0}^{-\frac{\pi}2+\theta} \tN_V(a,\tfrac{\pi}2,  \tfrac{\pi}2+\eta) = \Psi(z,a,\theta) - \Psi_0(a,\theta) + O(t^n),
\]
where
\[
\Psi(z,a,\theta) := \frac{n+1}{2\pi} \int_{0}^{-\frac{\pi}2+\theta} \int_{-\frac{\pi}2}^{-\tfrac{\pi}2+\eta} 
\log \abs{g(z,\nh + ae^{i\omega})} \>d\omega \>d\eta 
+ \frac{n+1}{2\pi} \int_0^a J_{g(z,\cdot)}(t, \theta-\pi)\>\frac{dt}{t},
\]
and
\[
\Psi_0(a,\theta) := \frac{n+1}{2\pi} \int_{0}^{-\frac{\pi}2+\theta} \int_{-\frac{\pi}2}^{-\tfrac{\pi}2+\eta} \log \abs{g_0(\nh + ae^{i\omega})} \>d\omega \>d\eta 
+ \frac{n+1}{2\pi} \int_0^a J_{g_0}(t, \theta-\pi)\>\frac{dt}{t}.
\]
Now we use the fact that $\Psi(z,a,\theta)$ is plurisubharmonic as a function of $z$, and argue as in the proof of 
Theorem~\ref{order.thm}.  Using the assumption on $z_0$ and \cite[Prop.~1.39]{LelongGruman}, we find that there exists a pluripolar set $E_\theta \subset \Omega$ such that
\[
\begin{split}
& \limsup_{a\to\infty} a^{-(n+1)} \left[\Psi(z,a,\theta) - \Psi_0(a,\theta)\right] \\
&\qquad = \frac{1}{2\pi(n+1)} h_{r_0}(\theta-\pi) + \frac{n+1}{2\pi}  
\int_{0}^{-\frac{\pi}2+\theta} \int_{-\frac{\pi}2}^{-\frac{\pi}2+\eta} h_{r_0}(\omega)\>d\omega\>d\eta,
\end{split}
\]
for $z\in\Omega-E_\theta$.  From this point we can simply follow the end of the proof of \cite[Thm.~1.2]{Christ:2012}, to take the limit
$\theta \to \tfrac{\pi}2+$.
\end{proof} 

We can be slightly more explicit about the constant appearing in Theorem~\ref{Ngeneric}, although it doesn't reduce to a simple formula.
We start from \eqref{indicator.def}, written as
\[
h_{r_0}(\theta) :=  \frac{2}{\Gamma(n)}  \int_{\varrho(\theta)}^\infty  
\frac{H(x e^{i\theta}; r_0)}{x^{n+2}}\>dx,
\]
where $\varrho(\theta)$ is the implicit solution of $H(\varrho(\theta) e^{i\theta}, r_0) = 0$.  We can easily compute
\[
\del_\theta H(x e^{i\theta}; r_0)\big|_{\theta = -\frac{\pi}2} = -x \log(x^2+1) + 2x \log \abs{x \cosh r_0 + \sqrt{x^2\sinh^2 r_0 - 1}}.
\]
Noting also that $\varrho(\pm \tfrac{\pi}2) = 1/\sinh r_0$, we obtain
\[
h_{r_0}'(-\tfrac{\pi}2+) = 2 \int_{1/\sinh r_0}^\infty x^{-(n+1)} \log \left( \frac{x \cosh r_0 + \sqrt{x^2\sinh^2 r_0 - 1}}{\sqrt{x^2+1}} \right) \> dx.
\]

\bigbreak
Our final result in this section concerns the ``expected value'' of the resonance counting function, computed as a weighted average over a complex family of potentials.  The following result says that such weighted averages will exhibit asymptotic behavior with optimal growth, both globally and in sectors.

\begin{theorem}\label{Naverage}
Under the hypotheses of Theorem~\ref{Ngeneric}, assume that $\psi \in C_0(\Omega)$ satisfies 
\[
\int_\Omega \psi \>dm = 1,
\]
where $m$ denotes Lebesgue measure on $\bbC^p$.  Then
\[
\int_{\Omega} N_{V_z}(t) \psi(z)\>dm(z) \sim  A_n(r_0) t^{n+1}.
\]
Furthermore, for $\tfrac{\pi}2 \le \theta_1 < \theta_2 \le \tfrac{3\pi}2$, with $\theta_i \ne \pi$,
\[
\begin{split}
\int_{\Omega} N_{V_z}(t,\theta_1, \theta_2)\>dm(z) &= N_0(t,\theta_1, \theta_2) 
+ \frac{(n+1) t^{n+1}}{2\pi} \int_{\theta_1-\pi}^{\theta_2-\pi} h_{r_0}(\omega)\>d\omega\\
&\qquad + \frac{t^{n+1}}{2\pi(n+1)}
\begin{cases} h_{r_0}'(\theta_2-\pi)&\text{if }  \theta_2 < \frac{3\pi}2 \\ 0 &\text{if } \theta_2 = \frac{3\pi}2 \end{cases} \\
&\qquad - \frac{t^{n+1}}{2\pi(n+1)}
\begin{cases} h_{r_0}'(\theta_1-\pi)&\text{if }  \theta_1 > \frac{\pi}2 \\ 0 &\text{if }  \theta_1 = \frac{\pi}2 \end{cases} \\
&\qquad + o(t^{n+1}).
\end{split}
\]
\end{theorem}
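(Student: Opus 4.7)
The strategy parallels the proof of Theorem~\ref{Msector.thm}, replacing the pointwise-in-$z$ completely-regular-growth statement by an averaged version obtained via plurisubharmonic compactness. I will execute the argument in four steps.

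First, as in the proofs of Theorems~\ref{order.thm} and~\ref{Ngeneric}, introduce $g(z,s) := \tau_{V_z}(s)\,g_0(s)$, further multiplied by a finite Blaschke-type factor removing the (finitely many) poles of $\tau_{V_z}(s)$ in $\re s > \nh$ coming from the discrete spectrum of $\Delta+V_z$. The resulting $g$ is jointly holomorphic on $\Omega \times \{\re s \ge \nh\}$, so for each fixed $(a,\omega)$ the map $z \mapsto \log|g(z,\nh+ae^{i\omega})|$ is plurisubharmonic on $\Omega$. Proposition~\ref{tau.upper} together with~\eqref{g0.ind} (extended to all $\omega \in [-\pi/2,\pi/2]$ via Phragm\'en--Lindel\"of) yields the uniform upper bound
\begin{equation*}
\frac{1}{a^{n+1}}\log\bigl|g(z,\nh+ae^{i\omega})\bigr| \le h_{r_0}(\omega)+H_0(\omega)+o(1)\quad\text{as }a\to\infty,
\end{equation*}
uniformly on compacts of $\Omega \times [-\pi/2,\pi/2]$.

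Second, I will verify that $g(z,\nh+\cdot)$ is of completely regular growth in the angle $(-\pi/2,\pi/2)$ with indicator $h_{r_0}+H_0$ for every $z \in \Omega \setminus E$, with $E\subset\Omega$ pluripolar. At $z=z_0$ this follows from~\eqref{limsup.tN2} and Proposition~\ref{count.prop} via the Levin argument used at the end of the proof of Theorem~\ref{Msector.thm}, invoking \cite[Thm~I.28 and Thm~IV.3]{Levin}. Theorem~\ref{order.thm} propagates~\eqref{limsup.tN2} to all of $\Omega \setminus E$ for some pluripolar $E$, so the same conclusion holds there.

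Third, integrate the sectorial argument-principle identity derived in the proof of Theorem~\ref{Msector.thm} against $\psi(z)\,dm(z)$ and apply Fubini. As in Theorem~\ref{Msector.thm}, the sectorial formula then reduces to establishing the averaged limit
\begin{equation*}
\lim_{a\to\infty}\frac{1}{a^{n+1}}\int_\Omega \log\bigl|g(z,\nh+ae^{i\omega})\bigr|\,\psi(z)\,dm(z) = h_{r_0}(\omega)+H_0(\omega)
\end{equation*}
for $\omega\in(-\pi/2,\pi/2)$, together with the corresponding limit for $a^{-(n+1)}\int_\Omega J_{g(z,\cdot)}(t,\omega)\psi(z)\,dm(z)$. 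Setting $u_a(z):=a^{-(n+1)}\log|g(z,\nh+ae^{i\omega})|$, the family $\{u_a\}$ is plurisubharmonic in~$z$, uniformly bounded above, and by Step~2 satisfies $\limsup_a u_a(z) = h_{r_0}(\omega)+H_0(\omega)$ on $\Omega \setminus E$ (indeed with a true limit along a density-$1$ subset of $a$'s). By compactness of locally uniformly bounded-above plurisubharmonic families, any $L^1_{\rm loc}$ subsequential limit $u_\infty$ is plurisubharmonic with upper-semicontinuous regularization $u_\infty^* = (\limsup_k u_{a_k})^*$; the pointwise-a.e.\ identity $\limsup_k u_{a_k} = h_{r_0}(\omega)+H_0(\omega)$ combined with upper semi-continuity forces $u_\infty^* \equiv h_{r_0}(\omega)+H_0(\omega)$ on $\Omega$. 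Hence $u_a \to h_{r_0}(\omega)+H_0(\omega)$ in $L^1_{\rm loc}(\Omega)$, and integration against $\psi$ delivers the displayed limit. The Jensen-integral variant follows from the same argument together with \cite[Lemma~III.2]{Levin} applied to the averaged function. The sectorial asymptotic for $\int_\Omega \tN_{V_z}(a,\theta_1,\theta_2)\psi(z)\,dm(z)$ then follows verbatim from the end of the proof of Theorem~\ref{Msector.thm}, and the Tauberian passage from $\tN$ to $N$ uses \cite[Lemma~1]{Stefanov:2006}.

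The main obstacle is the interchange of limit and integral in Step~3: completely regular growth gives pointwise convergence of $u_a(z)$ only along a density-$1$ subset of $a$'s that depends on $z$, so ordinary dominated convergence does not apply. The resolution via plurisubharmonic compactness, followed by the upper-semi-continuity identification of the limit as the constant $h_{r_0}(\omega)+H_0(\omega)$, is the central technical input, patterned on the corresponding arguments in \cite[\S5]{Christ:2012}.
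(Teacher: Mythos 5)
Your plan — introduce $g_0$ to cancel the spurious poles on $n-\calR_0$, average the sectorial argument-principle identity against $\psi$, and use plurisubharmonic compactness to interchange limit and integral — matches what the paper intends: the paper omits the proof entirely and cites \cite[\S5]{Christ:2012} with exactly the $g_0$-modification you describe. So at the level of outline your reconstruction is faithful.

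There is, however, a genuine gap in Step~3, and it is precisely the issue flagged in your own closing paragraph. You set $u_a(z):=a^{-(n+1)}\log|g(z,\nh+ae^{i\omega})|$, pick a sequence $a_k\to\infty$ with $u_{a_k}\to u_\infty$ in $L^1_{\rm loc}$, and assert the "pointwise-a.e.\ identity $\limsup_k u_{a_k}=h_{r_0}(\omega)+H_0(\omega)$." But completely regular growth only gives $\lim_{a\to\infty}u_a(z)=c$ for $a$ outside a zero-relative-density set $S_z\subset[0,\infty)$ that depends on $z$; this yields $\limsup_{a\to\infty}u_a(z)=c$ over the continuous parameter, not over a sequence chosen independently of $z$. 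For a fixed sequence $a_k$ and a given $z$ it is entirely possible that $\{a_k\}\subset S_z$, in which case $\limsup_k u_{a_k}(z)$ may be strictly less than $c$. Thus Hartogs' lemma gives you only $u_\infty^*\le c$, and the identification $u_\infty^*\equiv c$ does not follow. Your appeal at the end to "\cite[Lemma~III.2]{Levin} applied to the averaged function" cannot repair this either: $\int_\Omega\log|g(z,\cdot)|\psi(z)\,dm(z)$ is not the log-modulus of a holomorphic function of one variable, so Levin's lemma does not apply to it.

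The fix is to invert the order of operations: first apply \cite[Lemma~III.2]{Levin} \emph{pointwise in $z$}, which converts the density-one limit for $\log|g(z,\cdot)|$ into a genuine (unrestricted) limit for the Jensen integral $v_a(z):=a^{-(n+1)}J_{g(z,\cdot)}(a,\omega)$ for each $z\in\Omega\setminus E$. Since $v_a$ is still plurisubharmonic in $z$ and uniformly bounded above on compacts, your compactness argument now closes: any $L^1_{\rm loc}$-subsequential limit has u.s.c.\ regularization $\equiv c$ because the unrestricted pointwise limit exists off a pluripolar set, so $v_a\to c$ in $L^1_{\rm loc}(\Omega)$ and $\int_\Omega v_a\psi\,dm\to c$. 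This handles the $\int_0^aJ_\tau\,dt/t$ term after averaging \eqref{tN.int}. The remaining double-angle integral of $\log|\tau|$ at a single radius still carries the exceptional set, and there you need the monotonicity in $a$ of $\int_\Omega\psi_\pm(z)\,\tN_{V_z}(a,\cdots)\,dm(z)$ (splitting $\psi$ into positive and negative parts if $\psi$ is not assumed nonnegative) to upgrade the density-one asymptotic to a full one before invoking Stefanov's Tauberian lemma.
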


This result is the analog of Christiansen \cite[Thm.~1.3]{Christ:2012}.  The only major adjustment required in the proof is the replacement of
$\tau_{V_z}(s)$ by $g(z,s) := \tau_{V_z}(s) g_0(s)$ in the case where $n$ is odd.  Since this change was already discussed in the proofs of Theorems \ref{order.thm}, \ref{Msector.thm}, and \ref{Ngeneric}, and otherwise the details are thoroughly covered in 
\cite[\S5]{Christ:2012}, we will omit the proof.

\appendix
\section{Laplace's method}

Suppose $\phi: [a,b] \to \bbC$ is a smooth function with $\re \phi' >0$.
If $u(t)$ is smooth and non-vanishing at $b$, then the classical Laplace's method gives the asymptotic
$$
\int_a^b e^{2k \phi(t)} u(t)\>dt \sim \frac{u(b)e^{2k\phi(b)}}{2k \phi'(b)},
$$
as $k \to \infty$.   

In this appendix we will extend this classical result to include rougher assumptions on $u$ and an explicit estimate of the error.
This is fairly straightforward, but we include the details because the uniformity of the error estimate is crucial in our application.
\begin{prop}\label{Laplace.prop}
Assume that $\phi: [a,b] \to \bbC$ is a smooth function with $\re\phi'>0$, and that $u \in L^\infty[a,b]$ is continuous near $b$ and satisfies
$$
u(t) \sim A (b-t)^{\sigma-1}\quad\text{ as }t\to b,
$$
for some $\sigma \ge 1$.  For
$$
I(k) := \int_a^b e^{2k\phi(t)} u(t)\>dt,   \qquad
f(k) :=  A \frac{\Gamma(\sigma)}{(2k\phi'(b))^\sigma}\> e^{2k\phi(b)},
$$
we have $I(k) \sim f(k)$ as $k\to \infty$.

More precisely, assume that for $\beta, \vep >0$ $\phi$ satisfies the bounds
$$
\abs{\phi'(b)} \ge \beta, \quad \frac{\re\phi'(b)}{\abs{\phi'(b)}} \ge \beta, \quad \sup_{[a,b]}\abs{\phi''} \le \frac{1}{\beta}.
$$
Then, given $\delta >0$, there exists $N = N(\beta,\delta, u)$ such that
$k \ge N_{\vep,\delta}$ implies
$$
\abs{\frac{I(k)}{f(k)} - 1} \le \delta.
$$ 
\end{prop}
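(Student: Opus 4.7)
The plan is to localize the integral to a small neighborhood of $t=b$, rescale so the critical point sits at the origin, and then compare with the Gamma-function integral via dominated convergence. After changing variables to $s = b-t$, I would split the range at some $\eta \le \beta^3/2$ depending only on $\beta$. The hypotheses $|\phi'(b)| \ge \beta$ and $\re\phi'(b)/|\phi'(b)| \ge \beta$ yield $\re\phi'(b) \ge \beta^2$, and combined with $|\phi''|_\infty \le 1/\beta$ they give $\re\phi'(t) \ge \beta^2/2$ on $[b-\eta,b]$. Together with the global hypothesis $\re\phi' > 0$, this makes $\re\phi$ strictly increasing and yields the tail bound
\[
\Bigl|\int_a^{b-\eta} e^{2k\phi(t)} u(t)\,dt\Bigr| \le (b-a)\,\|u\|_\infty\, e^{2k\re\phi(b)}\, e^{-k\eta\beta^2},
\]
which is of lower order than $|f(k)| \gtrsim k^{-\sigma}\,e^{2k\re\phi(b)}$ for $k$ large.

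For the main piece I would expand $\phi(b-s) = \phi(b) - s\phi'(b) + R(s)$ with $|R(s)| \le s^2/(2\beta)$, write $u(b-s) = A s^{\sigma-1}(1+\tilde\epsilon(s))$ with $\tilde\epsilon(s)\to 0$, and substitute $v = 2ks$ to obtain
\[
\frac{A\,e^{2k\phi(b)}}{(2k)^\sigma}\int_0^{2k\eta} e^{-v\phi'(b)}\,v^{\sigma-1}\,e^{2kR(v/(2k))}\bigl(1+\tilde\epsilon(v/(2k))\bigr)\,dv.
\]
The target $f(k)$ corresponds to replacing the bracketed integral by $\Gamma(\sigma)/\phi'(b)^\sigma$, the value of $\int_0^\infty e^{-v\phi'(b)} v^{\sigma-1}\,dv$ obtained by analytic continuation in $\phi'(b)$ from the real-positive case (valid since $\re\phi'(b) > 0$). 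Splitting the $v$-integral at a large $M = M(\beta,\delta)$, on $[0,M]$ both $e^{2kR(v/(2k))} - 1 = O(M^2/(k\beta))$ and $\tilde\epsilon(v/(2k))$ tend to zero uniformly as $k\to\infty$, so dominated convergence handles that piece; on $[M, 2k\eta]$ the above identities yield the uniform bound $|e^{-v\phi'(b)+2kR(v/(2k))}| \le e^{-v(\beta^2 - \eta/(2\beta))} \le e^{-3v\beta^2/4}$, making $\int_M^{2k\eta}$ dominated by an exponential tail that is small for $M$ large independent of $k$. Choosing $M$ first, then $k \ge N(\beta,\delta,u)$ large enough that the inner approximation and the localization tail are both absorbed into $\delta$, completes the proof of $|I(k)/f(k)-1| \le \delta$.

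The main technical obstacle is that the phase $\phi'(b)$ is complex, so $e^{-v\phi'(b)}$ does not simply decay in $v$; one must use the sectorial condition $\re\phi'(b) \ge \beta|\phi'(b)|$ to extract decay and then balance this against the Taylor error $|R(s)| \le s^2/(2\beta)$, which is precisely what dictates the choice $\eta \asymp \beta^3$ so that the growth $v^2/(4k\beta)$ in the rescaled integrand can be absorbed by $v\,\re\phi'(b)$. The only additional subtlety beyond the classical smooth Laplace case is the weak regularity of $u$, which is handled by feeding the one-sided asymptotic $u(b-s) = As^{\sigma-1}(1+\tilde\epsilon(s))$ into the inner-piece estimate via $\|\tilde\epsilon\|_{L^\infty[0,M/(2k)]}$, which tends to zero as $k\to\infty$ by continuity of $u$ near $b$.
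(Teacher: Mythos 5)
Your proposal is correct in spirit and reaches the same conclusion, but it organizes the localization differently from the paper. The paper cuts at the $k$-dependent scale $b-k^{-3/4}$, which makes the Taylor remainder $h(x)$ uniformly $O(k^{-1/2}/\beta)$ across the whole inner range and lets them write $I_1$ directly as a Gamma integral plus two small corrections (a remainder term $J_1$ and an incomplete-Gamma tail $J_2$). You instead cut at a $k$-independent $\eta\asymp\beta^3$, rescale via $v=2ks$, and perform a secondary split at a $k$-independent $M$: on $[0,M]$ the deviations $e^{2kR(v/2k)}-1$ and $\tilde\epsilon(v/2k)$ vanish uniformly, while on $[M,2k\eta]$ the balance between $\re\phi'(b)\ge\beta^2$ and the Taylor growth $|2kR(v/2k)|\le v\eta/(2\beta)$ supplies exponential decay. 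Both routes hinge on the same two ingredients---the Taylor bound $|R(s)|\le s^2/(2\beta)$ and the analytically continued Gamma integral $\int_0^\infty e^{-v\phi'(b)}v^{\sigma-1}\,dv=\Gamma(\sigma)/\phi'(b)^\sigma$---and both give uniformity in $k\ge N(\beta,\delta,u)$ once the constants are traced.

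There is one point where your argument, as written, has a gap that you should repair. You assert that the tail $\int_a^{b-\eta}$ is of lower order than $|f(k)|\gtrsim k^{-\sigma}e^{2k\re\phi(b)}$; but $|f(k)|=|A|\Gamma(\sigma)\,|2k\phi'(b)|^{-\sigma}e^{2k\re\phi(b)}$, and the hypotheses bound $|\phi'(b)|$ only from \emph{below}. Since $N$ is allowed to depend only on $\beta,\delta,u$, the comparison must be uniform over arbitrarily large $|\phi'(b)|$, and the factor $|\phi'(b)|^{\sigma}$ that appears when dividing by $|f(k)|$ is not controlled by your stated tail decay $e^{-k\eta\beta^2}$. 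The fix is available within your own framework: on $[b-\eta,b]$ one has $\re\phi'(t)\ge\re\phi'(b)-\eta/\beta\ge\tfrac12\re\phi'(b)$ when $\eta\le\beta^3/2$, so $\re\phi(b)-\re\phi(b-\eta)\ge\tfrac12\eta\,\re\phi'(b)\ge\tfrac12\eta\beta|\phi'(b)|$. The tail is therefore $\lesssim e^{2k\re\phi(b)}e^{-k\eta\beta|\phi'(b)|}$, and the resulting ratio $|\phi'(b)|^\sigma k^\sigma e^{-k\eta\beta|\phi'(b)|}$ is, for $k$ large, maximized over $|\phi'(b)|\ge\beta$ at $|\phi'(b)|=\beta$, whence it is $\le \beta^\sigma k^\sigma e^{-k\eta\beta^2}$, which is uniformly small. (The paper handles the same issue by keeping $|\phi'(b)|^\sigma e^{-2k^{1/4}\re\phi'(b)}$ explicit and applying the same maximization.) The analogous $|\phi'(b)|^\sigma$ factor also appears when you normalize the main-piece errors by $f(k)$, but there it pairs with $\re\phi'(b)^{-\sigma}$ from the Gamma integral to give the harmless combination $(|\phi'(b)|/\re\phi'(b))^\sigma\le\beta^{-\sigma}$; you should make that pairing explicit as well.
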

\begin{proof}
Let us write
$$
\int_a^b e^{2k\phi(t)} u(t)\>dt  = I_1 + I_2 + I_3,
$$
where
$$
I_1 = A \int_{b-k^{-3/4}}^b e^{2k\phi(t)} (b-t)^{\sigma-1} \>dt,
$$
$$
I_2 =  \int_{b-k^{-3/4}}^b e^{2k\phi(t)} \left( u(t) - A(b-t)^{\sigma-1} \right) \>dt.
$$
$$
I_3 = \int_a^{b-k^{-3/4}} e^{2k\phi(t)} u(t) \>dt.
$$

For the first integral, we substitute $x = b-t$ and define 
\[
h(x) = \phi(b-x) - \phi(b) + \phi'(b) x,
\]
so that
\begin{equation}\label{I1.def}
I_1 = A e^{2k\phi(b)} \int_0^{k^{-\eta}} e^{-2k\phi'(b)x} e^{kh(x)} x^{\sigma-1}\>dx.
\end{equation}
This can be expressed as a Gamma integral plus some error terms:
$$
I_1  = A e^{2k\phi(b)} \left[ \frac{\Gamma(\sigma)}{(2k\phi'(b))^\sigma} + J_1 + J_2 \right],
$$
where 
$$
J_1 = \int_0^{k^{-3/4}} e^{-2k\phi'(b)x} \left(e^{kh(x)} - 1\right) x^{\sigma-1}\>dx,
$$
and
$$
J_2 = \int_{k^{-3/4}}^\infty e^{-2k\phi'(b)x} x^{\sigma-1}\>dx.
$$

To estimate $J_1$ we use Taylor's theorem to obtain
\[
\abs{h(x)} \le  \beta^{-1} \frac{x^2}2,
\]
for $x \in [a,b]$.  In particular, for $k$ sufficiently large we have
$$
\sup_{x\in [0,k^{-3/4}]} \abs{e^{kh(x)} - 1} \le C\beta^{-1} k^{-\frac12}.
$$
Applying this estimate in $J_1$, and then replacing the upper limit in the integral by $\infty$, we obtain the estimate
$$
\abs{J_1} \le \frac{\Gamma(\sigma)}{(2k \re\phi'(b))^\sigma} C\beta^{-1}k^{-\frac12}.
$$
For the $J_2$ term we can simply use the standard estimate on an incomplete Gamma function,
$$
\abs{J_2} \le \frac{1}{2k \re \phi'(b)} e^{- 2k^{1/4} \re \phi'(b)}.
$$

Now consider the second integral, $I_2$.  Given any $\delta>0$, we will have 
$$
\abs{u(t) - A(b-t)^{\sigma-1}} \le \frac12 \delta (b-t)^{\sigma-1},
$$
for all $t$ sufficiently close to $b$.  Hence, for $k \ge N_{\delta,u}$, 
$$
\abs{I_2} \le \frac12 \delta \int_{b-k^{-3/4}}^b e^{2k\re \phi(t)} (b-t)^{\sigma-1} \>dt.
$$
By the same analysis we used on $I_1$ we find
$$
\abs{I_2} \le \frac12 \delta e^{2k\re\phi(b)} \left[ \frac{\Gamma(\sigma)}{(2k \re\phi'(b))^\sigma} 
(1 +  C\beta^{-1}k^{-\frac12})
+ \frac{1}{2k \re \phi'(b)} e^{-2k^{1/4} \re \phi'(b)} \right],
$$
for sufficiently large $k$.

The third integral is estimated for $k\ge N_\vep$ by
\[
\begin{split}
\abs{I_3} & \le b \norm{u}_\infty \exp\left[2k \re \phi(b - k^{-3/4})\right]\\
& \le  b \norm{u}_\infty e^{2k \re \phi(b)} e^{-2k^{1/4}\re \phi'(b)} \exp\left[ k^{-\frac12}/\beta \right],
\end{split}
\]
again using Taylor and the assumed bound on $\phi''$.

Collecting these estimates, we find that for $k \ge N(\vep, \delta, u)$,
\[
\abs{\frac{I(k)}{f(k)} - 1} \le \frac{\delta}2 + C_\beta \left( \frac{\abs{\phi'(b)}}{\re\phi'(b)} \right)^\sigma k^{-\frac12}
+ C_\beta k^\sigma \abs{\phi'(b)}^\sigma e^{-2k^{1/4}\re \phi'(b)},
\]
Using the lower bounds on $\re\phi'(b)/\abs{\phi'(b)}$ and $\abs{\phi'(b)}$, this becomes
\[
\abs{\frac{I(k)}{f(k)} - 1} \le \frac{\delta}2 + C_{\beta,\vep} \left( k^{-\frac12} + k^\sigma e^{-c k^{1/4}} \right).
\]
We can adjust $N(\beta, \delta, u)$ as necessary to make the right-hand side smaller than 
$\delta$ for $k \ge N(\beta,  \delta, u)$.
\end{proof}

\end{document}